\newcommand{\R}  {{\mathbb R}}
\newcommand{\C}  {{\mathbb C}}
\newcommand{\K}  {{\mathbb K}}
\newcommand{\N}  {{\mathbb N}}
\newcommand{\bs} {{\boldsymbol{\sigma}}}
\newcommand{\bo} {{\boldsymbol{\omega}}}
\newcommand{\ba} {{\boldsymbol{\alpha}}}
\newcommand{\bn} {{\boldsymbol{\nu}}}
\newcommand{\bx} {{\boldsymbol{x}}}
\newcommand{\by} {{\boldsymbol{y}}}
\newcommand{\bu} {{\boldsymbol{u}}}
\newcommand{\bv} {{\boldsymbol{v}}}
\newcommand{\bw} {{\boldsymbol{w}}}
\newcommand{\be} {{\boldsymbol{1}}}
\newcommand{\bof}{{\boldsymbol{f}}}
\newcommand{\bog}{{\boldsymbol{g}}}
\newcommand{\bN} {{\boldsymbol{N}}}
\newcommand{\bU} {{\boldsymbol{U}}}
\newcommand{\bUJ}{{\boldsymbol{U}_{\!\!J}}}
\newcommand{\fe} {{\mathfrak{e}}} 
\newcommand{\ff} {{\mathfrak{f}}} 
\newcommand{\fg} {{\mathfrak{g}}} 
\newcommand{\fu} {{\mathfrak{u}}}
\newcommand{\fv} {{\mathfrak{v}}}
\newcommand{\X}  {{\mathfrak X}}
\newcommand{\Y}  {{\mathfrak Y}}
\newcommand{\Z}  {{\mathfrak Z}}
\newcommand{\HH} {{\mathcal H}}
\newcommand{\KK} {{\mathcal{K}}}
\newcommand{\Co} {C}
\newcommand{\Ko} {\KK}
\newcommand{\Coo}{C_{\star}}
\newcommand{\Koo}{\KK_{\star}}
\newcommand{\Xo} {\X}
\newcommand{\eps}  {\varepsilon}
\renewcommand{\phi}{\varphi}
\newcommand{\scp}[2]{\langle #1, #2 \rangle}
\newcommand{\spann} {\operatorname{span}}
\newcommand{\ha}{H1}
\newcommand{\hb}{H2}
\newcommand{\hc}{H3}
\numberwithin{equation}{section}
\theoremstyle{plain}
\newtheorem{lemma}{Lemma}[section]
\newtheorem{theo}[lemma]{Theorem}
\newtheorem{prop}[lemma]{Proposition}
\theoremstyle{definition}
\newtheorem{rem}[lemma]{Remark}
\newtheorem{exmp}[lemma]{Example}
\newtheorem{defi}[lemma]{Definition}
\begin{document}

\title[]
{Countable Tensor Products of Hermite Spaces and Spaces 
of Gaussian Kernels}

\author[Gnewuch]
{M.~Gnewuch}
\address{
Institut f\"ur Mathematik\\
Universit\"at Osnabr\"uck\\
Albrechtstra{\ss}e \ 28A\\
49076 Osnabr\"uck\\ 
Germany}
\email{michael.gnewuch@uni-osnabrueck.de}

\author[Hefter]
{M.~Hefter }
\address{Fachbereich Mathematik\\
Technische Universit\"at Kaisers\-lautern\\
Postfach 3049\\
67653 Kaiserslautern\\
Germany}
\email{hefter@mathematik.uni-kl.de}

\author[Hinrichs]
{A.~Hinrichs}
\address{
Institut f\"ur Analysis\\
Johannes-Kepler-Universit\"at Linz\\
Altenberger Str.\ 69\\
4040 Linz\\
Austria}
\email{aicke.hinrichs@jku.at}

\author[Ritter]
{K.~Ritter}
\address{Fachbereich Mathematik\\
Technische Universit\"at Kaisers\-lautern\\
Postfach 3049\\
67653 Kaiserslautern\\
Germany}
\email{ritter@mathematik.uni-kl.de}

\date{February 22, 2022}

\begin{abstract}
In recent years finite tensor products of
reproducing kernel Hilbert spaces (RKHSs) 
of Gaussian kernels on the one hand 
and of Hermite spaces on the other hand
have been considered in tractability analysis of
multivariate problems. 
In the present paper we study countably infinite tensor products
for both types of spaces. We show that the incomplete tensor
product in the sense of von Neumann may be identified with an RKHS whose 
domain is a proper subset of the sequence space $\R^\N$.
Moreover, we show that each tensor product of spaces  of Gaussian 
kernels having square-summable shape parameters is isometrically 
isomorphic to a tensor product of Hermite spaces; the corresponding 
isomorphism is given explicitly, respects point evaluations,
and is also an $L^2$-isometry. 
This result directly transfers to the case of finite tensor products.
Furthermore, we provide regularity results 
for Hermite spaces of functions of a single variable.
\end{abstract}

\maketitle

\section{Introduction}

This paper is motivated by the study of integration and
$L^2$-approximation in a tensor product setting for problems
with a large or infinite number $d$ of variables. Specifically,
we consider Hilbert spaces $H(M)$ with reproducing kernels
\[
M := \bigotimes_j m_j
\]
of tensor product form. Here we have
$j \in \{1,\dots,d\}$ if $d \in \N$ and $j \in \N$ if $d=\infty$,
and we assume that either all of the kernels 
\[
m_j \colon \R \times \R \to \R
\]
are Hermite or all are Gaussian kernels.  The same
terminology, Hermite kernel or Gaussian kernel,
will be used for the corresponding tensor 
product kernels $M$, and the reproducing kernel Hilbert spaces (RKHSs) 
with Hermite kernels are known as Hermite spaces, see \citet{IL15}.
Analogously, and despite a different use in stochastic analysis,
the RKHSs of Gaussian kernels will be called Gaussian spaces
throughout this paper. 

For both types of kernels, Hermite and Gaussian, the functions
in $H(M)$ are defined on the space $\R^d$ if 
$d \in \N$ or on a subset of the sequence space $\R^\N$ if $d=\infty$. 
Moreover, the measure $\mu$ that defines the integral or 
the $L^2$-norm is the corresponding 
product of the standard normal distribution $\mu_0$.

We are primarily interested in the case $d=\infty$.
This is the technically more
demanding case from which corresponding results
for the case $d\in\N$ can be deduced easily.
In the present paper we provide an analysis of the function spaces,
while the complexity of integration and $L^2$-approximation will
be studied in a follow-up paper.

Our approach is as follows. At first, we study the spaces $H(m_j)$
of functions of a single variable, and then we turn to the countable 
tensor product of these spaces, which allows to carry over the results
for $d=1$ to the tensor product Hilbert space 
in a canonical way.
Finally, we identify the tensor product Hilbert space with an
RKHS $H(M)$ for the tensor product kernel
\[
M \colon \X \times \X \to \R
\]
on a suitably chosen domain 
\[
\X \subseteq \R^\N.
\]

Let us refer to prior work on tractability of integration and 
$L^2$-approximation in the case $d \in \N$, i.e., for finite tensor 
products of either Hermite or Gaussian kernels.
Both types of kernels have been studied separately, so far.
For Hermite spaces the first contribution is due to
\citet{IL15}, followed by
\citet{IKLP15}, 
\citet{IKPW16a},
and
\citet{IKPW16b} 
with a focus on spaces of analytic functions,
and by 
\citet{DILP18}
with a focus on spaces of finite smoothness.
For Gaussian spaces the first contribution is due to
\citet{FHW2012}, followed by 
\citet{KSW2017},
\citet{SW2018}, and
\citet{KOG21}.
To the best of our knowledge, the case $d=\infty$ has not been
studied yet neither for Hermite nor for Gaussian kernels.

We show, in particular, that Hermite spaces and Gaussian spaces
with $d=\infty$ are closely related in the following sense:
Every Gaussian space with square-summable shape parameters
is isometrically isomorphic to a Hermite space with an explicitly
given isometry that respects point evaluations and is also an
$L^2(\mu)$-isometry. We add that the same holds true for every $d
\in \N$. This result may explain an observation due to 
\citet[p.~2]{KOG21} 
concerning the analysis of integration and approximation problems,
namely that techniques
similar to those used in the Gaussian case are used in the Hermite
case.
A similar observation was made earlier by \citet[p.~830]{KSW2017},
were the authors write that Gauss-Hermite quadratures used in
\citet{IKLP15} for integrands stemming from Hermite spaces work well 
also on Gaussian spaces, although the latter ones do not contain any 
non-trivial polynomial.

In the following we present the construction and a sketch of results for 
Hermite spaces. To define the Hermite kernels $m_j := k_j$
we employ the $L^2(\mu_0)$-normalized Hermite 
polynomials $h_\nu$ of degree $\nu \in \N_0$,
see Section~\ref{SUBSEC:Hermite_1} for the definition and 
basic properties.
For any choice of Fourier
weights $\alpha_{\nu,j} > 0$ with 
\[
\inf_{\nu \in \N_0} \alpha_{\nu,j} > 0
\]
and
\[
\sum_{\nu \in \N_0} \alpha_{\nu,j}^{-1} \cdot |h_\nu(x)|^2 < \infty
\]
for all $x \in \R$ the corresponding Hermite kernel is given by
\[
\phantom{\qquad\quad x,y \in \R.}
k_j (x,y) := 
\sum_{\nu\in \N_0} 
\alpha_{\nu,j}^{-1} \cdot h_\nu (x) \cdot h_\nu (y),
\qquad\quad x,y \in \R.
\]
A general set (\ha)--(\hc) of summability and monotonicity
assumptions for the Fourier weights is presented in
Section~\ref{s3.2}. In this introduction we focus on two particular 
kinds of Fourier 
weights, namely the case of polynomial growth, where 
\[
\phantom{\qquad\quad \nu,j \in \N,}
\alpha_{\nu,j} := (\nu+1)^{r_j},
\qquad\quad \nu,j \in \N,
\]
with $r_j > 0$, and the case of (sub-)exponential
growth, where
\[
\phantom{\qquad\quad \nu,j \in \N,}
\alpha_{\nu,j} := 2^{r_j \cdot \nu^{b_j}},
\qquad\quad \nu,j \in \N,
\]
with $r_j, b_j > 0$. The latter kind of Fourier weights with 
$b_j=1$ for all $j\in\N$ is the proper choice when we study the
relation between Hermite and Gaussian spaces,
see Section~\ref{s:iso}.

Let us consider Hermite spaces of functions of a
single variable. For any fixed $j \in \N$ the regularity of the functions 
from $H(k_j)$ is determined by the asymptotic behavior of 
$\alpha_{\nu,j}$ as $\nu \to \infty$, see 
\citet{IL15}, \citet{IKLP15}, and  \citet{DILP18}
for specific results. For Fourier weights 
with a polynomial growth the low regularity limit is $r_j = 1/2$, where
we loose convergence of $k_j$. See Example~\ref{r20} and cf.\
\citet[Sec.~2]{DILP18}, where convergence is established for $r_j > 5/6$.  
For Fourier weights 
with a (sub-)exponential growth the elements of $H(k_j)$ belong to the 
Gevrey class of index $\max(1/(2b_j),1)$; in particular, they are real 
analytic functions if $b_j \geq 1/2$. See Lemma~\ref{l76} and
cf.\ \citet[Prop.~3.7]{IL15} and \citet[Prop.~3]{IKLP15}, where
analyticity is established for $b_j \geq 1$, as well as 
\citet[Rem.~1]{IKPW16b}, where for $b_j\in {]0,1[}$ the inclusion 
of $H(k_j)$ in the Gevrey class of index $1/b_j$ is claimed.

Furthermore, Hermite spaces with Fourier weights of polynomial 
growth with $r_j$ varying in ${]1/2,\infty[}$ and of exponential 
growths with $b_j=1$ and with $r_j$ varying in ${]0,\infty[}$ 
form interpolation scales with equality of the norms due to quadratic 
interpolation and the original norm on the RKHS.
See Remark~\ref{r30} and Examples~\ref{r20} and \ref{r22}. 

Next we present the main results for Hermite spaces of functions of 
infinitely many variables, where we require 
\[
r_j > 1/2
\qquad \text{and} \qquad
\sum_{j \in \N} 2^{-r_j} < \infty
\]
in the case of polynomial growth
and
\[
\sum_{j \in \N} 2^{-r_j} < \infty
\qquad \text{and} \qquad
\inf_{j \in \N}b_j > 0
\]
in the case of (sub-)exponential growth.
In both cases we additionally require
\[
\sum_{j \in \N} |\alpha_{0,j}-1| < \infty.
\]
See Section~\ref{s3.3}.
We show that the maximal domain $\X$ for the tensor product kernel
$K := \bigotimes_{j \in \N} k_j$ is related to an intersection of 
weighted $\ell^2$-spaces and satisfies
\[
\ell^\infty \subsetneq \X \subsetneq \R^\N
\]
as well as
\[
\mu(\X) = 1.
\]
See Proposition~\ref{lem:domain2} and Lemma~\ref{l2}. 
We conclude, in particular, that the Hermite spaces $H(K)$ of
functions of infinitely many variables are never spaces of
functions on the domain $\R^\N$. 

In the case of exponential growth with 
\[
\inf_{j \in \N} b_j \geq 1
\]
the maximal domain is explicitly given as
\[
\X = \bigg\{ \bx \in \R^\N \colon \sum_{j \in \N} 2^{-r_j}
\cdot x_j^2 < \infty  \bigg\}. 
\]
See Proposition~\ref{Lemma:Char_X_EG} and cf.\ 
Proposition~\ref{p17}, which deals with the case of polynomial and 
sub-exponential growth.

Moreover, we show that the incomplete tensor product $H^{(\ba_0^{-1/2})}$, 
see \citet{Neu39} 
and Definition \ref{def:incprod},
of the Hermite spaces $H(k_j)$ based on the constant functions  
$\alpha_{0,j}^{-1/2}$ as unit vectors may be identified with the Hermite space 
$H(K)$ on the maximal domain $\X$ in a canonical way. See Lemma~\ref{l51} 
and Theorem~\ref{ta1}. 
We add that the tensor product norm on $H(K)$ respects the 
infinite-dimensional ANOVA decomposition of a function, i.e., 
different ANOVA components are pairwise orthogonal in $H(K)$. 
See Remark~\ref{rem:ANOVA}.

Next we come to Gaussian spaces. The Gaussian kernel 
$m_j := \ell_j$ with shape parameter $\sigma_j > 0$ is given by
\[
\phantom{\qquad\quad x,y \in \R.}
\ell_j (x,y) := \exp( -\sigma_j^2 \cdot (x-y)^2),
\qquad\quad x,y \in \R.
\]
Gaussian spaces $H(\ell_j)$ and finite tensor products thereof
are thoroughly analyzed in 
\citet{SHS2006}, see also \citet[Sec.~4.4]{SC08}. 
In the following we sketch our main results for Gaussian
spaces in the case $d=\infty$ with shape parameters
$\sigma_j>0$ satisfying
\[
\sum_{j \in \N} \sigma_j^2 < \infty.
\]
We show that the incomplete tensor product
$G^{(\bv)}$ of the Gaussian spaces $H(\ell_j)$ based on the unit
vectors $v_j(x) := \exp(-\sigma_j^2 \cdot x^2)$ may be identified
with a Gaussian space $H(L)$ in a canonical way.
The corresponding domain for the tensor product kernel $L :=
\bigotimes_{j \in \N} \ell_j$ is given by
\[
\X = \bigg\{ \bx \in \R^\N \colon \sum_{j \in \N} \sigma_j^2 \cdot x_j^2
< \infty \bigg\},
\]
which is a proper subset of $\R^\N$,
despite the convergence of $\prod_{j=1}^\infty \ell_j(x_j,y_j)$
for all $\bx, \by \in \R^\N$. 
See Lemma \ref{l11}, Remark~\ref{rem:summarize}, and 
Theorem \ref{ta1}. We add that $\mu(\X) = 1$.

The identification of the Gaussian spaces $H(L)$ and the Hermite
spaces $H(K)$ with incomplete tensor products allows to establish
the following result via tensorization.
For any Gaussian space $H(L)$ we consider the Hermite space $H(K)$ with 
Fourier weights
\[
\phantom{\qquad\quad \nu \in \N_0,\ j \in \N,}
\alpha_{\nu, j} := \frac{1}{(1-\beta_j)\cdot \beta^\nu_j},
\qquad\quad \nu \in \N_0,\ j \in \N,
\]
where 
\[
\beta_j := 1 - \frac{2}{1+c_j^2}
\qquad \text{with} \qquad
c_j := \left(1+8\sigma_j^2\right)^{1/4}.
\]
Moreover, we define 
\[
Qf (\bx) := \prod_{j=1}^\infty c_j
\cdot 
\exp \biggl( - \sum_{j\in\N} \frac{c_j^2-1}{4} \,x_j^2 \biggr) 
\cdot f\bigl(c_1 x_1, c_2 x_2, \dots \bigr)
\]
for $f \in L^2(\mu)$ and $\bx \in \X$.
Then $Q$ is an isometric isomorphism on $L^2(\mu)$ and 
$Q|_{H(K)}$ is an isometric isomorphism between $H(K)$ and $H(L)$.
See Theorem~\ref{t1}.
Another feature of $Q$ that is highly relevant
in the context of integration and approximation problems
is the obvious fact that for every $\bx \in \X$ a single function value 
of $f \in H(K)$ suffices to compute $Qf(\bx)$ and vice versa.

This paper is organized as follows: 
Since all spaces of functions we consider are subspaces of (suitable) 
spaces of square-integrable functions, we discuss in 
Section~\ref{SEC:L2_Sub} under which conditions subspaces of $L^2$-spaces 
are reproducing kernel Hilbert spaces and what
their kernels and their 
norms look like. Proposition~\ref{l24} provides sufficient
conditions and facilitates our later analysis of function spaces 
considerably. In Section~\ref{SEC:Hermite} we define the Hermite spaces 
we want to study and present their essential properties. 
There and in 
the following two sections we first consider spaces of univariate 
functions and then turn to spaces of functions depending on infinitely 
many variables. 
The important example cases, where the 
Fourier weights exhibit polynomial or (sub-)exponential growth, are 
studied in Section~\ref{s3.3}.
In Section~\ref{SEC:Gauss} we present the essential well-known facts on 
Gaussian spaces
of univariate functions, see 
Section~\ref{SUBSEC:Gauss_1},
and then analyze Gaussian spaces of functions of infinitely many 
variables, see 
Section~\ref{SUBSEC:Gauss_inf}.
Relations between Hermite and Gaussian spaces are 
established in Section~\ref{s:iso}.
In the appendix we recall the 
construction and key properties of complete and incomplete
tensor products of Hilbert spaces and discuss the two important cases of 
incomplete tensor products of reproducing kernel Hilbert spaces, see 
Section~\ref{as4}, and of $L^2$-spaces, see Section~\ref{as5}.

Let us close the introduction with some remarks concerning 
notation, terminology,  and conventions.
We consider the Borel $\sigma$-algebra on $\R$ and, in particular,
the standard normal distribution $\mu_0$ on this space.
Moreover, we consider the corresponding product $\sigma$-algebra on 
$\R^\N$ and, in particular, the countable product $\mu$
of $\mu_0$ on this space. 
For $E \neq \emptyset$ and $\K \in \{\R,\C\}$ a function
$K \colon E \times E \to \K$ is a called a 
\emph{reproducing kernel}
if $K$ is symmetric in the case $\K=\R$ and Hermitian in the case
$\K=\C$ and positive definite in the following sense:
For all $n \in \N$, $x_1,\dots,x_n \in
E$, $a_1,\dots,a_n \in \K$ we have $\sum_{i,j=1}^n \overline{a_i}
a_j \cdot K(x_i,x_j) \geq 0$.

For any sequence $(a_j)_{j \geq J_0}$ in $\R$ such that
$(\prod_{j=J_0}^J a_j)_{J\geq J_0}$ is convergent we put 
\begin{equation}\label{g29}
\prod_{j=J_0}^\infty a_j := \lim_{J \to \infty} \prod_{j=J_0}^J a_j.
\end{equation}
The collection of arbitrary finite partial products
of $(a_j)_{j \geq J_0}$ forms a net; the corresponding stronger
notion of convergence is discussed in Section~\ref{as1}.

We use the following notation for subspaces of $\R^\N$.
The space of all sequences $\bx := (x_j)_{j \in \N}$ such that 
$\{j \in \N \colon x_j \neq 0\}$ is finite is denoted by
$c_{00}$, and $\ell^\infty$ denotes the space of all bounded sequences.
For any sequence $\bo := (\omega_j)_{j \in \N}$ of positive real numbers
and $1 \leq p < \infty$ we use $\ell^p(\bo)$ to denote
the space of all sequences $\bx$ such that
\[
\sum_{j \in \N} \omega_j \cdot |x_j|^p < \infty.
\]
For $n \in \N$ we use $\bN_n$ to denote the set of all 
sequences $(\nu_j)_{j \in \N}$ in $\N_0$ such that 
$\nu_j = 0$ for every $j > n$. Moreover, we put 
$\bN := \bigcup_{n \in \N} \bN_n$, i.e., $\bN = c_{00} \cap \N_0^\N$.

We use $\bU$ to denote the set of all finite subsets of $\N$.

\section{$L^2$-Subspaces as Reproducing Kernel Hilbert Spaces}
\label{SEC:L2_Sub}

Consider a space $L^2(\rho) := L^2(E,\rho)$ over $\K \in \{\R,\C\}$ with 
respect to a measure $\rho$ on a $\sigma$-algebra on a set $E$. 
Let $N$ denote a countable set, and let $(\fe_\nu)_{\nu \in N}$ denote a 
family of square-integrable functions on $E$ such that the corresponding 
equivalence classes $e_\nu$ form an orthonormal basis of $L^2(\rho)$. 
Moreover, let $(\alpha_\nu)_{\nu \in N}$ denote a family of 
positive real numbers, which will be called \emph{Fourier weights} in 
view of the following result
that is essentially due to \citet[Sec.~2.1]{GHHRW2020}.

\begin{prop}\label{l24}
Assume that
\begin{equation}\label{g42}
\inf_{\nu \in N} \alpha_\nu > 0
\end{equation}
and
\begin{equation}\label{g23}
\sum_{\nu\in N} \alpha_\nu^{-1} \cdot |\fe_\nu (x)|^2 < \infty
\end{equation}
for every $x \in E$. If $(c_\nu)_{\nu \in N} \in \K^N$ satisfies 
\[
\sum_{\nu \in N} \alpha_{\nu} \cdot |c_\nu|^2 < \infty,
\]
then $\sum_{\nu \in N} c_\nu \cdot e_\nu$ converges in
$L^2(\rho)$ and
$\sum_{\nu \in N} c_\nu \cdot \fe_\nu(x)$ is absolutely convergent
for every $x \in E$. Moreover,
\[
\phantom{\qquad\quad x,y \in E,}
K (x,y) := \sum_{\nu\in N} 
\alpha_\nu^{-1} \cdot \fe_\nu (x) \cdot \overline{\fe_\nu (y)},
\qquad\quad x,y \in E,
\]
defines a reproducing kernel on 
$E \times E$. The corresponding Hilbert space is given by
\begin{equation}\label{eq:charRKHS}
H(K) = \Bigl\{
\sum_{\nu \in N} c_\nu \cdot \fe_\nu \colon 
\text{$(c_\nu)_{\nu \in N} \in \K^N$ with 
$\sum_{\nu \in N} \alpha_{\nu} \cdot |c_\nu|^2 < \infty$} 
\Bigr\}
\end{equation}
and 
\[
\phantom{\qquad \ff,\fg \in H(K).}
\scp{\ff}{\fg}_{H(K)} = 
\sum_{\nu \in N} \alpha_{\nu} \cdot 
\int_E \ff \cdot \overline{\fe_\nu} \, d \rho \cdot
\int_E \fe_\nu \cdot \overline{\fg} \, d \rho, 
\qquad \ff,\fg \in H(K).
\]
\end{prop}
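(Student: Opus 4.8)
The plan is to verify the two convergence statements by elementary Hilbert-space estimates, to check directly that $K$ is a reproducing kernel, and then to identify $H(K)$ by exhibiting the candidate space on the right-hand side of \eqref{eq:charRKHS} as a reproducing kernel Hilbert space with kernel $K$, after which the uniqueness of the RKHS finishes the proof. For the convergence claims, recall that $(e_\nu)_{\nu \in N}$ is an orthonormal basis, so $\sum_\nu c_\nu e_\nu$ converges in $L^2(\rho)$ if and only if $(c_\nu) \in \ell^2(N)$; since $\alpha_\nu \geq \inf_{\kappa \in N} \alpha_\kappa > 0$ by \eqref{g42}, one has $\sum_\nu |c_\nu|^2 \leq (\inf_{\kappa \in N} \alpha_\kappa)^{-1} \sum_\nu \alpha_\nu |c_\nu|^2 < \infty$, which gives the $L^2$-convergence. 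For the pointwise absolute convergence I would split $c_\nu \fe_\nu(x) = (\alpha_\nu^{1/2} c_\nu)(\alpha_\nu^{-1/2} \fe_\nu(x))$ and apply Cauchy--Schwarz, the first factor being square-summable by hypothesis and the second by \eqref{g23}.

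The same splitting applied to $\alpha_\nu^{-1} \fe_\nu(x) \overline{\fe_\nu(y)}$ shows that the series defining $K(x,y)$ converges absolutely for all $x,y \in E$, so $K$ is well defined; it is manifestly symmetric in the real and Hermitian in the complex case, and for finite data $x_1,\dots,x_n$ and $a_1,\dots,a_n$ the interchange of the finite double sum with the series over $\nu$ is unproblematic and yields $\sum_{i,j} \overline{a_i} a_j K(x_i,x_j) = \sum_\nu \alpha_\nu^{-1} \bigl| \sum_i \overline{a_i} \fe_\nu(x_i) \bigr|^2 \geq 0$, so $K$ is a reproducing kernel. Next I would introduce the candidate space $V$ defined as the right-hand side of \eqref{eq:charRKHS}, whose elements are honest functions on $E$ by the pointwise convergence just established. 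The crucial point is that the coefficients are recoverable: if $\ff = \sum_\nu c_\nu \fe_\nu \in V$, then a subsequence of the $L^2$-partial sums converges $\rho$-a.e.\ to the $L^2$-limit $\sum_\nu c_\nu e_\nu$, while the full sequence converges everywhere to $\ff$, so $\ff$ agrees $\rho$-a.e.\ with that $L^2$-limit and hence $\int_E \ff \cdot \overline{\fe_\nu}\, d\rho = c_\nu$ for every $\nu$. Thus $\ff \mapsto (c_\nu)_\nu$ is a well-defined linear bijection onto $\{(c_\nu) \colon \sum_\nu \alpha_\nu |c_\nu|^2 < \infty\}$, and equipping $V$ with $\scp{\ff}{\fg}_V := \sum_\nu \alpha_\nu c_\nu \overline{d_\nu}$, where $d_\nu$ are the coefficients of $\fg$, makes $V$ a Hilbert space isometric to this weighted $\ell^2$-space.

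Finally I would check that $V$ is the RKHS of $K$. For fixed $y$ the function $K(\cdot,y)$ has coefficients $\alpha_\nu^{-1} \overline{\fe_\nu(y)}$, and $\sum_\nu \alpha_\nu \, |\alpha_\nu^{-1} \overline{\fe_\nu(y)}|^2 = \sum_\nu \alpha_\nu^{-1} |\fe_\nu(y)|^2 < \infty$ by \eqref{g23}, so $K(\cdot,y) \in V$; for $\ff = \sum_\nu c_\nu \fe_\nu \in V$ the reproducing identity $\scp{\ff}{K(\cdot,y)}_V = \sum_\nu \alpha_\nu c_\nu \, \alpha_\nu^{-1} \fe_\nu(y) = \ff(y)$ then holds. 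By uniqueness of the reproducing kernel Hilbert space this identifies $H(K) = V$, which is \eqref{eq:charRKHS}; substituting $c_\nu = \int_E \ff \cdot \overline{\fe_\nu}\, d\rho$ and $\overline{d_\nu} = \int_E \fe_\nu \cdot \overline{\fg}\, d\rho$ into $\scp{\ff}{\fg}_V$ gives the stated inner product formula. I expect the main obstacle to be the careful realization of $V$ as a Hilbert space of genuine functions, in particular the passage between the everywhere-defined pointwise sums and their $L^2(\rho)$-classes that guarantees the coefficients are uniquely determined by the function; the convergence estimates, the positive definiteness, and the reproducing property are routine by comparison.
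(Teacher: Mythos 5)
Your proposal is correct and follows essentially the same route as the paper: both realize the candidate space as the image of the weighted $\ell^2$ coefficient space under pointwise summation, establish that a function determines its coefficients by comparing the everywhere-convergent pointwise sums with an a.e.-convergent subsequence of the $L^2(\rho)$ partial sums, and then identify the result with $H(K)$. The only difference is that you verify the reproducing property and invoke uniqueness of the RKHS directly, whereas the paper delegates the injectivity and the feature-space identification of the kernel to cited lemmas from Gnewuch et al.\ (2020).
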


\begin{proof}
The convergence of 
$\sum_{\nu \in N} c_\nu \cdot e_\nu$ and
$\sum_{\nu \in N} c_\nu \cdot \fe_\nu(y)$, as claimed, is easily
verified.

The remaining part of the proof is based on the following
facts. The linear subspace 
\begin{equation}\label{g25a}
\HH := 
\Bigl\{ f \in L^2(\rho) \colon 
\sum_{\nu \in N} \alpha_{\nu} \cdot |\scp{f}{e_\nu}_{L^2(\rho)}|^2 
< \infty \Bigr\}
\end{equation}
of $L^2(\rho)$,
equipped with the scalar product
\begin{equation}\label{g25b}
\phantom{\qquad f,g \in \HH,}
\scp{f}{g}_{\HH} := 
\sum_{\nu \in N} \alpha_{\nu} \cdot 
\scp{f}{e_\nu}_{L^2(\rho)} \cdot \scp{e_\nu}{g}_{L^2(\rho)},
\qquad f,g \in \HH,
\end{equation}
is a Hilbert space with a continuous embedding of norm
$\sup_{\nu \in N} \alpha_\nu^{-1/2}$ into $L^2(\rho)$. 
The elements $h_\nu := \alpha_\nu^{-1/2} e_\nu$ with $\nu \in N$ form an
orthonormal basis of $\HH$.

We employ the first part of the proof to define a linear
mapping $\Phi \colon \HH \to \K^E$ by
\[
\phantom{\qquad\quad x \in E,}
\Phi f (x) := 
\sum_{\nu \in N} \scp{f}{e_\nu}_{L^2(\rho)} \cdot \fe_\nu (x),
\qquad\quad x \in E,
\]
i.e., $\Phi f$ is the pointwise
limit of the $L^2$-Fourier partial sums of $f \in \HH$.
Obviously, $\Phi h_\nu = \alpha_\nu^{-1/2} \fe_\nu$.
The mapping $\Phi$ is injective, see
\citet[Rem.~2.6]{GHHRW2020}, so that 
$\Phi(\HH)$, equipped with the scalar product
\[
\phantom{\qquad\quad f,g \in \HH,}
\scp{\Phi f}{\Phi g} = \scp{f}{g}_\HH,
\qquad\quad f,g \in \HH,
\]
is a Hilbert space of $\K$-valued functions on $E$.
It follows that $(\Phi(\HH), \scp{\cdot}{\cdot})$ is a reproducing
kernel Hilbert space, see
\citet[Lem.~2.2]{GHHRW2020}, 
and its reproducing kernel is $K$, 
see \citet[Lem.~2.1]{GHHRW2020}. 
Finally, observe that
\[
\Phi(\HH) = 
\Bigl\{
\sum_{\nu \in N} c_\nu \cdot \fe_\nu \colon 
\text{$(c_\nu)_{\nu \in N} \in \K^N$ with 
$\sum_{\nu \in N} \alpha_{\nu} \cdot |c_\nu|^2 < \infty$}
\Bigr\}.
\qedhere
\]
\end{proof} 

Subsequently, we do no longer distinguish between 
square-integrable functions on $E$ and elements of $L^2(\rho)$;
in particular, we identify $\fe_\nu$ and $e_\nu$.

\begin{rem}\label{r42}
Assume that \eqref{g42} and \eqref{g23}
for every $x \in E$ are satisfied. 
According to Proposition~\ref{l24},
$H(K) \subseteq L^2(\rho)$ with a continuous embedding,
which is compact if and only if for every $r>0$ the set 
$\{ \nu \in N \colon \alpha_\nu \le r \}$ is finite.
The proof of the proposition reveals that
$\HH$ is a feature space and $\phi \colon E \to \HH$,
given by $\phi(x) := \sum_{\nu \in \N} \alpha_\nu^{-1} \cdot
e_\nu(x) \cdot e_\nu$, is a feature map of $K$.
See, e.g., \citet[Sec.~4.1]{SC08} for these concepts.

Additionally, assume that 
\[
e_{\nu^*} = 1
\]
for some $\nu^* \in N$.
Then 
\[
K^* := K - \alpha_{\nu^*}^{-1}
\]
is a reproducing kernel, too. Furthermore, 
\[
\int_E f \, d \rho = 0
\]
for every $f \in H(K^*)$, since 
$\int_E e_\nu \, d \rho = \scp{e_\nu}{e_{\nu^*}}_{L^2(\rho)} = 0$
for $\nu \neq \nu^*$. In particular, this yields 
\[
H(1) \cap H(K^*) = \{0\},
\]
and $\int_E f \, d \rho$ is the orthogonal projection 
of $f \in H(K)$ onto $H(1)$ in $H(K)$ as well as in $L^2(\rho)$.
\end{rem}

\begin{rem}\label{r30}
We discuss interpolation of Hilbert spaces in the present
framework. To this end we consider two families 
$(\alpha_{\nu,0})_{\nu \in N}$ and
$(\alpha_{\nu,1})_{\nu \in N}$ of Fourier weights such that 
$\inf_{\nu \in N} \alpha_{\nu,0} > 0$ and
$\{ \nu \in N \colon \alpha_{\nu,1}/\alpha_{\nu,0} \leq r\}$ is
finite for every $r > 0$. Clearly
$\inf_{\nu \in N} \alpha_{\nu,1} > 0$, too.
Analogously to \eqref{g25a} and \eqref{g25b}
we obtain
linear subspaces $\HH_0$ and $\HH_1$ of $L^2(\rho)$, based on
$(\alpha_{\nu,0})_{\nu \in N}$ and
$(\alpha_{\nu,1})_{\nu \in N}$,
respectively. It follows that
$\HH_1$ is a dense subspace of $\HH_0$ with a compact
embedding. 

For quadratic interpolation of the spaces $\HH_0$ and $\HH_1$ by means of 
the $K$-method and the $J$-method we obtain
\[
K_{\theta,2}(\HH_0,\HH_1) = J_{\theta,2}(\HH_0,\HH_1) = \HH
\]
for every $0 < \theta < 1$, where $\HH$ is the linear subspace of
$L^2(\rho)$ based on the Fourier weights
\[
\phantom{\qquad\quad \nu \in \N.}
\alpha_\nu := \alpha_{\nu,1}^\theta \cdot
\alpha_{\nu,0}^{1-\theta},
\qquad\quad \nu \in \N.
\]
Furthermore, we have equality of norms in these spaces.
See \citet[Thm.~3.4]{ChaEtAl2015}.
Assume, additionally, that \eqref{g23} is satisfied for
$\alpha_\nu:=\alpha_{\nu,0}$. Then the same conclusions hold true
for the associated reproducing kernel Hilbert spaces.
\end{rem}

\section{Hermite Spaces}
\label{SEC:Hermite}

\subsection{Functions of a Single Variable}
\label{SUBSEC:Hermite_1}

Here we consider the space
$L^2(\mu_0) := L^2(\R, \mu_0)$ for the standard normal
distribution $\mu_0$ over $\K=\R$. 

The normalized Hermite polynomials $h_\nu$ of degree 
$\nu\in \N_0$ form an orthonormal basis of $L^2(\mu_0)$ 
and can be obtained by orthonormalizing the monomials. In particular, 
we have $h_0(x)=1$ and $h_1(x) = x$ for all $x\in\R$.
An explicit representation for any $\nu \in \N_0$ is 
\begin{equation}\label{eq:rep_hermite_polynomial}
\phantom{\qquad\quad x\in \R.}
h_\nu(x) = \sqrt{\nu !} \sum^{\lfloor \nu/2 \rfloor}_{k=0} 
(-1)^k \frac{x^{\nu - 2k}}{2^k\, k! \, (\nu -2k)! },
\qquad\quad x\in \R,
\end{equation}
see \citet[Eqn.~(5.5.4)]{S75}.
For basic properties of
these functions we refer to, e.g., \citet[Sec.~1.3]{MR1642391}. 
In the next lemma we collect some asymptotic properties of the 
Hermite polynomials. 

\begin{lemma}\label{l41}
For every $x\in\R$ we have
\begin{align}\label{eq10}
\sup_{\nu\in\N_0} \bigl(\nu^{1/4} \cdot |h_\nu(x)|\bigr) < \infty
\end{align}
as well as Cram\'er's inequality 
\begin{align}\label{eq11}
\sup_{\nu \in \N_0} |h_\nu(x)| \leq \exp(x^2/4). 
\end{align}
For $x=0$ we have
\begin{align}\label{eq12}
\inf_{\nu\in\N} \bigl(\nu^{1/4} \cdot |h_{2\nu}(x)|\bigr) > 0. 
\end{align}
\end{lemma}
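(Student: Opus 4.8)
The plan is to establish the three inequalities by separate arguments: the lower bound \eqref{eq12} follows from the explicit representation \eqref{eq:rep_hermite_polynomial} together with an elementary estimate for the central binomial coefficient, while the two upper bounds \eqref{eq10} and \eqref{eq11} are read off from the classical asymptotic theory of Hermite functions after rescaling to the standard normalization.

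I would begin with \eqref{eq12}, the most explicit of the three. Evaluating \eqref{eq:rep_hermite_polynomial} at $x=0$ leaves only the summand with $\nu-2k=0$, so that
\[
|h_{2\nu}(0)| = \frac{\sqrt{(2\nu)!}}{2^\nu\,\nu!},
\qquad\text{i.e.}\qquad
|h_{2\nu}(0)|^2 = \binom{2\nu}{\nu}\,4^{-\nu}.
\]
Writing $a_\nu := \binom{2\nu}{\nu}4^{-\nu}$, the recursion $a_{\nu+1} = a_\nu\cdot\frac{2\nu+1}{2\nu+2}$ yields $a_\nu \ge (2\sqrt\nu)^{-1}$ for all $\nu\in\N$ by a one-line induction (the base case $\nu=1$ is an equality). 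Hence $\nu^{1/4}\,|h_{2\nu}(0)| \ge \nu^{1/4}(2\sqrt\nu)^{-1/2} = 2^{-1/2}$ for every $\nu$, which proves \eqref{eq12}; the companion estimate $a_\nu\sim(\pi\nu)^{-1/2}$ shows in addition that the exponent $1/4$ is sharp.

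For the upper bounds I would pass to the functions $g_\nu(x) := (2\pi)^{-1/4}e^{-x^2/4}h_\nu(x)$, which form an orthonormal system in $L^2(\R,dx)$. A short comparison of the probabilist's and physicist's normalizations identifies them with the standard Hermite functions $\psi_\nu$ (orthonormal in $L^2(\R,dx)$) through $g_\nu(x) = 2^{-1/4}\psi_\nu(x/\sqrt2)$, equivalently
\[
h_\nu(x) = \pi^{1/4}\,e^{x^2/4}\,\psi_\nu\!\left(x/\sqrt2\right).
\]
Under this identification \eqref{eq11} is precisely Cram\'er's inequality $\sup_\nu\|\psi_\nu\|_\infty \le \pi^{-1/4}$ (with equality for $\nu=0$, matching $h_0\equiv1$), which I would cite from \citet{S75}. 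For \eqref{eq10} I would invoke the Plancherel--Rotach asymptotics, again from \citet{S75}: for a fixed argument lying in the oscillatory region one has $|\psi_\nu(y)| \le C(2\nu+1)^{-1/4}$ for all large $\nu$, and since $y=x/\sqrt2$ is such a point once $\nu$ is large enough, this gives $|h_\nu(x)| \le C_x\,\nu^{-1/4}$, i.e.\ \eqref{eq10}.

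The main obstacle is \eqref{eq10}: unlike the closed-form evaluation behind \eqref{eq12} and the named inequality \eqref{eq11}, the sharp pointwise decay rate $\nu^{-1/4}$ genuinely needs the second-order (bulk) asymptotics of the Hermite functions. Elementary substitutes are not enough: Mehler's identity $\sum_\nu h_\nu(x)^2\,t^\nu = (1-t^2)^{-1/2}\exp\bigl(x^2 t/(1+t)\bigr)$ combined with a Tauberian theorem controls only the averages $N^{-1}\sum_{\nu\le N}h_\nu(x)^2$, and since the terms are not monotone in $\nu$ this does not bound an individual value. If a self-contained argument were preferred over citing Plancherel--Rotach, I would instead run a Sonine/Liouville--Green monotonicity argument on the quantity $\psi_\nu^2 + (\psi_\nu')^2/(2\nu+1-x^2)$ built from the differential equation $\psi_\nu'' + (2\nu+1-x^2)\psi_\nu = 0$; controlling this across the turning point is the technically delicate point.
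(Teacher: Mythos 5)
Your proof is correct and follows essentially the same route as the paper: both reduce \eqref{eq10} to the classical fixed-argument asymptotics of Hermite functions/polynomials from \citet{S75}, cite Cram\'er's inequality for \eqref{eq11}, and use the explicit central value $|h_{2\nu}(0)|=\sqrt{(2\nu)!}/(2^\nu\,\nu!)$ for \eqref{eq12}. The only notable difference is that you obtain the lower bound $\binom{2\nu}{\nu}4^{-\nu}\ge(2\sqrt{\nu})^{-1}$ by an elementary induction on the ratio $(2\nu+1)/(2\nu+2)$, where the paper instead applies Stirling's formula to get the strong asymptotics $\nu!\,2^{\nu}/\sqrt{(2\nu)!}\approx\pi^{1/4}\nu^{1/4}$; both arguments are valid.
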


\begin{proof}
For every $x\in\R$ we have
\begin{equation}\label{eq1}
\sup_{\nu\in\N_0}
\frac{\Gamma(\nu/2+1)}{\Gamma(\nu+1)}
\cdot 2^{\nu/2}\cdot \sqrt{\nu!}\cdot |h_\nu(x)|
<\infty,
\end{equation}
see, e.g., \citet[Eqn.~(8.22.8)]{S75}.
Moreover, Stirling's approximation yields 
the strong asymptotics
\begin{equation}\label{eq2}
\frac{\Gamma(\nu/2+1)}{\Gamma(\nu+1)}\cdot 2^{\nu/2}\cdot \sqrt{\nu!}
\approx
(\pi/2)^{1/4}\cdot \nu^{1/4}
\end{equation}
as $\nu\to\infty$.
Combine \eqref{eq1} and \eqref{eq2} to obtain \eqref{eq10}.
For Cram\'er's inequality \eqref{eq11} we refer to, e.g., \citet{MR132852}.
For $x=0$ and $\nu\in\N_0$ we have
\begin{equation}\label{eq3}
|h_{2\nu}(x)| = \sqrt{(2\nu)!} / (\nu! \cdot 2^\nu),
\end{equation}
see, e.g., \citet[Eqn.~(5.5.5)]{S75}.
Furthermore, \eqref{eq2} implies 
\begin{equation}\label{eq4}
\frac{\nu! \cdot 2^{\nu}}{\sqrt{(2\nu)!}}
\approx
\pi^{1/4}\cdot \nu^{1/4}
\end{equation}
as $\nu\to\infty$.
Combine \eqref{eq3} and \eqref{eq4} to obtain \eqref{eq12}.
\end{proof}

We will also employ the following estimate (with a suboptimal constant).

\begin{lemma}\label{Lemma_3.1b}
For every $\nu\in \N_0$ and every $x\in \R$ we have
\[
|h_\nu(x)| \le 2^\nu \max( 1, |x|^\nu).
\]
\end{lemma}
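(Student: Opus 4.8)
The plan is to prove the bound $|h_\nu(x)| \le 2^\nu \max(1,|x|^\nu)$ by direct estimation of the explicit representation \eqref{eq:rep_hermite_polynomial}. The right-hand side is a crude (suboptimal) bound, so I expect generous slack, which means I should look for the simplest route rather than the sharpest constants. The key observation is that each monomial $x^{\nu-2k}$ appearing in the sum has exponent between $0$ and $\nu$, so every such power is dominated by $\max(1,|x|^\nu)$; this factor then pulls out of the sum uniformly in $k$, reducing the problem to estimating the numerical coefficients.

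First I would take absolute values in \eqref{eq:rep_hermite_polynomial} and apply the triangle inequality to get
\[
|h_\nu(x)| \le \sqrt{\nu!}\, \sum_{k=0}^{\lfloor \nu/2\rfloor}
\frac{|x|^{\nu-2k}}{2^k\, k!\, (\nu-2k)!}.
\]
Since $0 \le \nu-2k \le \nu$, I would bound $|x|^{\nu-2k} \le \max(1,|x|^\nu)$ for every index $k$ and factor this out of the sum. What remains is to show that the purely numerical quantity
\[
\sqrt{\nu!}\, \sum_{k=0}^{\lfloor \nu/2\rfloor} \frac{1}{2^k\, k!\, (\nu-2k)!}
\]
is at most $2^\nu$.

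For that numerical estimate I would bound the sum from above by dropping the factor $2^{-k} \le 1$ and comparing $\frac{1}{k!\,(\nu-2k)!}$ with binomial-type coefficients. One clean route: since $k! \ge 1$ and the map $k \mapsto \binom{\nu}{2k}$ controls $\frac{\nu!}{(2k)!(\nu-2k)!}$, I can write $\frac{1}{k!(\nu-2k)!} \le \frac{(2k)!}{k!}\cdot\frac{1}{(2k)!(\nu-2k)!} = \frac{(2k)!}{k!}\cdot\frac{\binom{\nu}{2k}}{\nu!}$, and then use $(2k)!/k! \le (2k)! \le (\nu)!^{1/2}\cdots$ — but this threatens to get fiddly. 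The cleaner comparison is to note that $\sqrt{\nu!}/(k!(\nu-2k)!) \le \binom{\nu}{2k}$ whenever $\sqrt{\nu!}\,(2k)! \le \nu!\,k!$, which holds for the relevant range, so the whole sum is dominated by $\sum_{k=0}^{\lfloor\nu/2\rfloor}\binom{\nu}{2k} \le \sum_{m=0}^{\nu}\binom{\nu}{m} = 2^\nu$.

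The main obstacle is precisely this last combinatorial step: verifying that the coefficient bound $\sqrt{\nu!}/(k!(\nu-2k)!) \le \binom{\nu}{2k}$ holds uniformly in $k$, or finding a slightly different comparison that does. Because the target constant $2^\nu$ is deliberately loose, I am confident some elementary bound of this form goes through — if the cleanest inequality above fails for small $\nu$, those finitely many cases can be checked by hand, and for large $\nu$ Stirling-type estimates give ample room. I would organize the final writeup around the two displayed reductions (pulling out $\max(1,|x|^\nu)$, then bounding the numerical sum by $\sum_k \binom{\nu}{2k} \le 2^\nu$), keeping the combinatorial lemma self-contained.
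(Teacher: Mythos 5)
Your overall strategy is the same as the paper's: triangle inequality on \eqref{eq:rep_hermite_polynomial}, pull out $\max(1,|x|^\nu)$ using $0\le \nu-2k\le\nu$, and then bound the numerical sum termwise by $\binom{\nu}{2k}$ so that $\sum_k\binom{\nu}{2k}\le 2^\nu$ finishes the proof. However, the combinatorial step you propose is genuinely broken, and not just for finitely many small cases. After dropping the factor $2^{-k}$, the termwise bound you need is $\sqrt{\nu!}\,(2k)!\le \nu!\,k!$, i.e.\ $(2k)!/k!\le\sqrt{\nu!}$. For $\nu=2k$ this reads $(2k)!\le (k!)^2$, i.e.\ $\binom{2k}{k}\le 1$, which fails for every $k\ge 1$ (already at $\nu=2$, $k=1$: the term is $\sqrt{2}$ while $\binom{2}{2}=1$). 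So the failure occurs for arbitrarily large $\nu$, and the fallback of ``checking finitely many cases by hand'' is not available; Stirling does not rescue the termwise comparison either, since the deficit grows like $\sqrt{\binom{2k}{k}}$.

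The repair is precisely to \emph{not} discard the $2^{-k}$: it is the essential ingredient. Keeping it, the required termwise inequality becomes
\[
\frac{(2k)!}{2^k\,k!}\;\le\;\sqrt{\nu!},
\]
and the left-hand side equals $(2k-1)!!$, which satisfies $\bigl((2k-1)!!\bigr)^2\le (2k)!\le \nu!$ for $2k\le\nu$. Equivalently, this is the paper's elementary estimate $(2k)!\le 2^{2k}(k!)^2$, which yields $\frac{\sqrt{\nu!}}{2^k k!}\le\frac{\nu!}{(2k)!}$ and hence the termwise domination by $\binom{\nu}{2k}$. With that one correction your argument coincides with the paper's proof; as written, the key inequality you assert ``holds for the relevant range'' is false.
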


\begin{proof}
Due to \eqref{eq:rep_hermite_polynomial} we have
\[
|h_\nu(x)| \le \sum^{\lfloor \nu/2 \rfloor}_{k=0}  
\frac{\sqrt{\nu !}}{2^k\, k! \, (\nu -2k)! } \cdot \max(1, |x|^\nu).
\]
Furthermore, the elementary estimate $(2k)! \le 2^{2k} (k!)^2$ 
implies 
$ \frac{\sqrt{\nu !}}{2^k\, k! } \le  \frac{\nu !}{(2k)! }$
for $k \le \nu/2$, and therefore 
\[
\sum^{\lfloor \nu/2 \rfloor}_{k=0}   
\frac{\sqrt{\nu !}}{2^k\, k! \, (\nu -2k)! }
\le 
\sum^{\lfloor \nu/2 \rfloor}_{k=0} \frac{\nu !}{(2k)! \, (\nu -2k)! }
\le
\sum^{\nu}_{k=0} \frac{\nu !}{k! \, (\nu -k)! } =2^\nu.
\qedhere
\]
\end{proof}

With $e_\nu = h_\nu$ and Fourier weights $\alpha_\nu \in {]0,\infty[}$ 
for $\nu \in \N_0$, the summability property \eqref{g23} reads as
\begin{equation}\label{g49}
\sum_{\nu \in \N_0} \alpha_\nu^{-1} \cdot |h_\nu(x)|^2 < \infty.
\end{equation}
It is closely related to the condition
\begin{equation}\label{g41}
\sum_{\nu \in \N} \alpha_\nu^{-1} \cdot \nu^{-1/2} < \infty.
\end{equation}

\begin{lemma}\label{l42}
If \eqref{g41} is satisfied,
then \eqref{g49} holds for every $x \in \R$.
\end{lemma}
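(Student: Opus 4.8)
The plan is to reduce the summability condition \eqref{g49} to the hypothesis \eqref{g41} by means of the pointwise growth bound \eqref{eq10} from Lemma~\ref{l41}. Fix $x \in \R$. The estimate \eqref{eq10} asserts that
\[
M_x := \sup_{\nu\in\N_0}\bigl(\nu^{1/4}\cdot|h_\nu(x)|\bigr)
\]
is finite, and this is exactly the input that converts uniform-in-$\nu$ information about the Hermite polynomials into a decay rate for $|h_\nu(x)|^2$.

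First I would record the consequence of the finiteness of $M_x$: for every $\nu\in\N$ we have $|h_\nu(x)| \le M_x\cdot\nu^{-1/4}$, and hence
\[
|h_\nu(x)|^2 \le M_x^2\cdot\nu^{-1/2}.
\]
Next I would split the series in \eqref{g49} into the term $\nu=0$ and the tail over $\nu\in\N$. Since $h_0\equiv 1$, the zeroth term equals $\alpha_0^{-1}$, which is finite. For the tail the displayed bound gives
\[
\sum_{\nu\in\N}\alpha_\nu^{-1}\cdot|h_\nu(x)|^2
\le M_x^2\cdot\sum_{\nu\in\N}\alpha_\nu^{-1}\cdot\nu^{-1/2},
\]
and the right-hand side is finite by the assumption \eqref{g41}. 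Adding back the finite zeroth term then yields \eqref{g49}.

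There is no substantial obstacle here: the whole argument rests on the already-established bound \eqref{eq10}, and the remaining steps are an elementary termwise comparison. The only point that warrants a word of care is that $M_x$ depends on $x$, so the resulting estimate is pointwise rather than uniform in $x$; but this is precisely what \eqref{g49} demands, as it is asserted separately for every $x\in\R$. Note also that the condition \eqref{g42} on $\inf_{\nu}\alpha_\nu$ plays no role in this implication and need not be invoked.
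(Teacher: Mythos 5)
Your argument is correct and is exactly the paper's proof: the paper simply states that the lemma ``follows directly from \eqref{eq10}'', and your write-up spells out that termwise comparison (bounding $|h_\nu(x)|^2$ by $M_x^2\cdot\nu^{-1/2}$ and invoking \eqref{g41}) in full detail. Nothing is missing and no different idea is used.
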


\begin{proof}
The statement follows directly from \eqref{eq10}. 
\end{proof} 

Due to \eqref{eq12}
the reverse implication holds true under mild additional assumptions. For 
instance, if \eqref{g49} holds for $x=0$ and
$\alpha_\nu \leq \alpha_{\nu+1}$ holds for all $\nu$ sufficiently
large, then \eqref{g41} is satisfied.

\begin{defi}
 Assume that $\inf_{\nu \in \N_0} \alpha_\nu >0$ and that \eqref{g49} is satisfied for every $x \in \R$. The reproducing kernel 
\begin{equation}\label{g54}
\phantom{\qquad\quad x,y \in \R,}
k(x,y) := 
\sum_{\nu \in \N_0} \alpha_{\nu}^{-1} \cdot h_\nu(x) \cdot
h_\nu(y),
\qquad\quad x,y \in \R,
\end{equation}
is called a \emph{Hermite kernel} and the Hilbert space $H(k)$ is called a \emph{Hermite space} of functions of a single real variable.
\end{defi}

Observe that Proposition~\ref{l24} provides a characterization 
of the Hermite space $H(k)$ 
as a linear subspace of $L^2(\mu_0)$.
For definition and terminology we refer to
\citet[Sec.~3.1]{IL15}
and \citet[Sec.~2]{DILP18}, 
where $\sum_{\nu \in \N} \alpha_\nu^{-1} < \infty$ or
$\sum_{\nu \in \N} \alpha_\nu \cdot \nu^{-1/6} < \infty$, respectively, 
is assumed, cf. \eqref{g41} and Lemma \ref{l42}.

\begin{exmp}\label{r20}
Let $r> 0$.
If $\alpha_\nu := (\nu+1)^r$ for $\nu \in \N_0$, then
\eqref{g41} is satisfied if and only if $r > 1/2$. 
Moreover, the spaces $H(k)$ with $r > 1/2$ form an
interpolation scale, see Remark~\ref{r30}.
In the particular 
case $r \in \N$ the space $H(k)$ is, up to equivalence of norms, the 
Sobolev space of all continuous
functions in $L^2(\mu_0)$ with weak derivatives up to order $r$
belonging to $L^2(\mu_0)$,
see, e.g., \citet[Prop.~1.5.4]{MR1642391} 
and \citet[Sec.~2]{DILP18}.
Unfortunately, it seems to be unknown if there is a corresponding 
natural interpolation scale of weighted Sobolev spaces of fractional 
order. The known interpolation results for weighted Sobolev spaces 
of functions on $\R$ require weights that decrease slower
at infinity than the Gaussian weight function, see 
\citet[Sec.~3.3.1]{MR503903}. 
Hence an intrinsic characterization of the functions in $H(k)$ for 
non-integer $r$ via fractional smoothness remains open.

Necessary as well as sufficient conditions, involving
classical differentiability, for $f \in H(k)$ to hold true are
presented in \citet[Sec.~3.1.1]{IL15} in the case $r > 1$.
\end{exmp}

\begin{exmp}\label{r22}
Let $r,b > 0$.
If $\alpha_0:=1$ and $\alpha_\nu := 2^{r \cdot \nu^b}$ for $\nu \geq 1$, 
then \eqref{g41} is satisfied in any case, and $H(k)$ consists of
$C^\infty$-functions. 
Moreover, in the case $b=1$ the spaces $H(k)$ with $r > 0$ form an 
interpolation scale, see Remark~\ref{r30}.
If $b \geq 1$ then the elements of $H(k)$ are real analytic
functions, see \citet[Prop.~3.7]{IL15} for the case $b=1$, which
trivially extends to any $b \geq 1$.
It is also observed in \citet[Rem.~1]{IKPW16b} without proof that, for 
$0<b<1$, the elements of $H(k)$ belong to the Gevrey class
$G^\sigma(\R)$ of index $\sigma := 1/b$.
By definition, $G^\sigma(\R)$ with
index $\sigma \ge 1$ consists of all infinitely differentiable functions 
$f$  on $\R$ such that for any $R>0$ there exists $c>0$ with
\[
 |f^{(\ell)} (x)| \le (c^{\ell +1} \ell!)^{\sigma}
\]
for $\ell \in \N_0$ and $x\in \left[-R,R\right]$.
The Gevrey class $G^1(\R)$ coincides with the class of real analytic 
functions on $\R$, while $G^\sigma(\R)$ contains compactly supported 
functions $f \neq 0$ for $\sigma>1$, see \citet{R93}. 
The following lemma improves upon the known regularity results for 
functions in $H(k)$.
\end{exmp}

\begin{lemma}\label{l76}
Let $k$ be the Hermite kernel specified in Example~\ref{r22}
and let 
\[
\sigma := \max( 1 , 1/2b ).
\]
 Then 
 \[
  H(k) \subseteq G^\sigma(\R).
 \]
 In particular, $H(k)$ contains only real analytic functions if $b\ge 1/2$.
\end{lemma}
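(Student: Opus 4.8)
The plan is to expand an arbitrary $f \in H(k)$ in the Hermite basis, differentiate term by term, and control the derivatives through the explicit weight structure $\alpha_\nu = 2^{r\nu^b}$. By Proposition~\ref{l24}, every $f \in H(k)$ has the form $f = \sum_{\nu \in \N_0} c_\nu h_\nu$ with $c_\nu = \scp{f}{h_\nu}_{L^2(\mu_0)}$ and $\|f\|_{H(k)}^2 = \sum_{\nu \in \N_0} \alpha_\nu \cdot |c_\nu|^2 < \infty$. I would invoke the classical differentiation rule $h_\nu' = \sqrt{\nu}\,h_{\nu-1}$ for the normalized Hermite polynomials, which iterates to $h_\nu^{(\ell)} = \sqrt{\nu!/(\nu-\ell)!}\,h_{\nu-\ell}$ for $\ell \le \nu$ and $h_\nu^{(\ell)} = 0$ for $\ell > \nu$, so that formally
\[
f^{(\ell)}(x) = \sum_{\nu \ge \ell} c_\nu \sqrt{\tfrac{\nu!}{(\nu-\ell)!}}\, h_{\nu-\ell}(x).
\]

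To make this rigorous I would apply Cram\'er's inequality \eqref{eq11}, giving $|h_{\nu-\ell}(x)| \le \exp(x^2/4) \le \exp(R^2/4)$ uniformly on $[-R,R]$, and then use the Cauchy--Schwarz inequality split along the weights $\alpha_\nu$:
\[
\sum_{\nu \ge \ell} |c_\nu| \sqrt{\tfrac{\nu!}{(\nu-\ell)!}}
\le \|f\|_{H(k)} \Bigl( \sum_{\nu \ge \ell} \alpha_\nu^{-1} \tfrac{\nu!}{(\nu-\ell)!} \Bigr)^{1/2}
=: \|f\|_{H(k)}\, S_\ell^{1/2}.
\]
Since this bound is uniform in $x \in [-R,R]$ and finite for each $\ell$, it simultaneously shows that every differentiated series converges absolutely and locally uniformly; the standard theorem on uniform convergence of derivatives then certifies $f \in C^\infty(\R)$ and validates the term-by-term formula. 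Hence $|f^{(\ell)}(x)| \le \exp(R^2/4)\,\|f\|_{H(k)}\, S_\ell^{1/2}$, and the whole problem reduces to a sharp bound on $S_\ell$.

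The heart of the argument, and the step I expect to be the main obstacle, is to show $S_\ell \le A^{\ell+1} (\ell!)^{2\sigma}$ for a constant $A = A(r,b)$. I would first estimate $\nu!/(\nu-\ell)! \le \nu^\ell$ and $\alpha_\nu^{-1} = 2^{-r\nu^b}$ (the exceptional value $\alpha_0 = 1$ plays no role once $\ell \ge 1$, while $\ell = 0$ is trivial) to get $S_\ell \le \sum_{\nu \ge \ell} \nu^\ell 2^{-r\nu^b}$. The summand $t \mapsto t^\ell 2^{-rt^b}$ is unimodal, so comparing the sum with the integral $\int_0^\infty t^\ell 2^{-rt^b}\,dt$ (plus its maximal term) and substituting $u = r(\ln 2)\,t^b$ yields $S_\ell \le C\,(r\ln 2)^{-(\ell+1)/b}\,\Gamma\bigl((\ell+1)/b\bigr)$ up to a factor that is itself of order $C^\ell$. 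Stirling's formula applied to $\Gamma((\ell+1)/b)$ and to $\ell!$ then gives $\Gamma((\ell+1)/b) \le C'(\ell+1)^{1/b}(b^{-1/b})^{\ell+1}(\ell!)^{1/b}$, and absorbing the subexponential and exponential factors into the base of a single exponential leaves $S_\ell \le A^{\ell+1} (\ell!)^{1/b}$. Because $1/b \le 2\sigma = \max(2,1/b)$ by the very definition of $\sigma$, we have $(\ell!)^{1/b} \le (\ell!)^{2\sigma}$, which closes the estimate.

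Combining the pieces yields $|f^{(\ell)}(x)| \le \exp(R^2/4)\,\|f\|_{H(k)}\,(A^{1/2})^{\ell+1}(\ell!)^\sigma$ on $[-R,R]$. Enlarging the constant to absorb the $R$- and $f$-dependent prefactor (choosing $c$ with $c^\sigma \ge \max(1,\exp(R^2/4)\|f\|_{H(k)})\cdot A^{1/2}$ and using $\max(1,\cdot)^{\ell+1} \ge \cdot$ for $\ell \ge 0$) produces a bound of the exact Gevrey form $|f^{(\ell)}(x)| \le (c^{\ell+1}\ell!)^\sigma$, so $f \in G^\sigma(\R)$. In particular $b \ge 1/2$ forces $\sigma = 1$, i.e.\ real analyticity, as claimed.
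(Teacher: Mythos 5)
Your proposal is correct and follows essentially the same route as the paper's proof: the iterated differentiation formula $h_\nu^{(\ell)} = \sqrt{\nu!/(\nu-\ell)!}\,h_{\nu-\ell}$, Cram\'er's inequality combined with Cauchy--Schwarz to reduce everything to the weighted sum $S_\ell$, the bound $\nu!/(\nu-\ell)! \le \nu^\ell$, comparison of the sum with $\int t^\ell 2^{-rt^b}\,dt$, the substitution producing $\Gamma((\ell+1)/b)$, Stirling in both directions, and finally $1/b \le 2\sigma$. The only (harmless) divergences are cosmetic: you justify the sum-to-integral step via unimodality where the paper uses $2^{-r\nu^b} \le 2^r\cdot 2^{-rt^b}$ on $[\nu,\nu+1]$, and you spell out the term-by-term differentiation and the $\ell=0$ case a bit more explicitly.
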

\begin{proof}
The first part of the proof follows the argument of \citet[Prop.~3.7]{IL15} and is  
based on the formula
\[
h_\nu^{(\ell)} = \sqrt{\frac{\nu!}{(\nu-\ell)!}} h_{\nu-\ell}
\] 
for $\ell \leq \nu$
that allows it to directly relate the coefficients in the orthogonal 
expansion of $f^{(\ell)}$ with respect to the Hermite polynomials
with the coefficients in the expansion of $f \in H(k)$.
Then Cram\'er's inequality \eqref{eq11} together with the Cauchy-Schwarz 
inequality lead to the pointwise estimate
\[
|f^{(\ell)} (x)| \le \exp(x^2/4) \cdot \|f\|_{H(k)} \cdot
\left( 
\sum_{\nu=\ell}^\infty \frac{\nu!}{(\nu-\ell)!} \alpha_\nu^{-1} 
\right)^{1/2}.
\]
It follows that $f$
belongs to the Gevrey class $G^\sigma(\R)$ with 
index $\sigma  \ge 1$ if there exists $c>0$ such that
\begin{equation} \label{eq:prooflemgev}
\sum_{\nu=\ell}^\infty \frac{\nu!}{(\nu-\ell)!} \alpha_\nu^{-1} 
\leq (c^{\ell +1} \ell!)^{2 \sigma}
\end{equation}
for $\ell \in \N_0$.
We now use 
\[
 \alpha_\nu^{-1} = 2^{- r \cdot \nu^b} \le 2^r \cdot 2^{-r \cdot t^b}
\]
for $t \in [ \nu, \nu+1]$ to estimate
\[
 \sum_{\nu=\ell}^\infty \frac{\nu!}{(\nu-\ell)!} \alpha_\nu^{-1} 
 \leq
 \sum_{\nu=\ell}^\infty \nu^\ell \alpha_\nu^{-1} 
 \leq
 2^r \int_\ell^\infty t^\ell 2^{-r \cdot t^b} \, d t 
 \leq c^{\ell +1} \int_0^\infty u^{(\ell+1)/b-1} e^{-u} \, d u
\]
with $c>0$ depending only on $r$ and $b$.
The last integral is just $\Gamma \left( \frac{\ell+1}{b}\right)$. Now using Stirling's formula both as upper bound for 
$\Gamma \left( \frac{\ell+1}{b}\right)$ and as lower bound for $\ell !$ shows that there exists $c>0$ depending only on $r$ and $b$ such that 
\[
 \sum_{\nu=\ell}^\infty \frac{\nu!}{(\nu-\ell)!} \alpha_\nu^{-1} 
 \leq
  ( c^{\ell +1} \ell!)^{1/b}.
\]
Now comparing with \eqref{eq:prooflemgev} proves the lemma.
\end{proof}

\subsection{Functions of Infinitely Many Variables}\label{s3.2}
\label{SUBSEC:Hermite_inf}

Our construction of Hermite spaces of functions of infinitely many
real variables is based on Fourier weights 
$\alpha_{\nu,j} \in \left]0,\infty\right[$ for $\nu \in \N_0$ and 
$j \in \N$ with the following properties:
\begin{itemize}
\item[(\ha)]
$\sum_{\nu \in \N} \alpha_{\nu,j}^{-1} \cdot \nu^{-1/2} < \infty$
for every $j \in \N$
and
$\alpha_{\nu,j} \leq \alpha_{\nu+1,j}$ for all $\nu,j \in \N$,
\item[(\hb)]
$\sum_{j \in \N} |\alpha_{0,j} - 1| < \infty$,
\item[(\hc)]
there exists an integer $j_0 \in \N$ such that
$\sum_{\nu \in \N,\ j \geq j_0} \alpha_{\nu,j}^{-1} < \infty$.
\end{itemize}

If (\ha) is satisfied then
we may consider the 
Hermite kernels 
\[
\phantom{\qquad\quad x,y \in \R,}
k_j (x,y) := 
\sum_{\nu\in \N_0} 
\alpha_{\nu,j}^{-1} \cdot h_\nu (x) \cdot h_\nu (y),
\qquad\quad x,y \in \R,
\]
as well as the Hermite
spaces $H(k_j)$ as linear subspaces of $L^2(\mu_0)$,
see Proposition~\ref{l24} and Lemma~\ref{l42}. Observe that
\[
k_j^* := k_j - \alpha_{0,j}^{-1}
\]
is a reproducing kernel, too, which follows from $h_0=1$.
Cf.\ Remark~\ref{r42}.
Property (\hb) provides a certain amount of flexibility
for the choice of the Fourier weights $\alpha_{0,j}$;
values different from $1$ naturally arise in Section~\ref{s:iso}.

In each of the spaces $H(k_j)$ we choose the constant function  
$\alpha_{0,j}^{-1/2}$ as unit vector
and we study the corresponding incomplete tensor product
\begin{equation}\label{g66}
H^{(\ba_0^{-1/2})} := \bigotimes_{j \in \N} \left(H(k_j)\right)^{(\alpha_{0,j}^{-1/2})},
\end{equation}
see Definition \ref{def:incprod}.

We will employ Theorem~\ref{ta1} to identify the space $H^{(\ba_0^{-1/2})}$ with
a reproducing kernel Hilbert space of functions on a suitable
subset $\X$ of $\R^\N$.
For any non-empty domain $\X \subseteq \R^\N$ that ensures the 
convergence of the partial products $\prod_{j=1}^J k_j(x_j,y_j)$ as 
$J \to \infty$ we consider the tensor product kernel $K$
given by
\begin{equation}\label{g27}
\phantom{\qquad\quad \bx,\by \in \X.}
K(\bx,\by) := \prod_{j=1}^\infty k_j(x_j,y_j),
\qquad\quad \bx,\by \in \X.
\end{equation}

For a finite subset $\fu$ of $\N$, we put 
\[
c_{\fu} :=  
\prod_{j \in \fu} \alpha_{0,j}
\cdot \prod_{j=1}^\infty \alpha_{0,j}^{-1},
\]
as well as
\[
\phantom{\qquad\quad \bx,\by \in \X.}
k^\ast_{\fu} (\bx,\by) := 
\prod_{j \in \fu} 
k^*_j(x_j,y_j),
\qquad\quad \bx,\by \in \X.
\]

\begin{lemma}\label{l51}
Assume that {\rm (\ha)} and {\rm (\hb)} are satisfied.
Then the maximal domain for $K$ is given by
\begin{equation}\label{g30a}
\X :=  \Bigl\{ \bx \in \R^\N \colon \sum_{j\in \N} k_j^* (x_j,x_j) <
\infty \Bigr\}.
\end{equation}
Moreover, 
\begin{equation}\label{g30}
\phantom{\qquad\quad \bx,\by \in \X,}
K(\bx,\by) = 
\sum_{\fu \in \bU} c_\fu \cdot k^\ast_\fu(\bx,\by),
\qquad\quad \bx,\by \in \X,
\end{equation}
with absolute convergence.
\end{lemma}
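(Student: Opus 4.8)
The plan is to reduce everything to the additive decomposition $k_j(x_j,y_j) = \alpha_{0,j}^{-1} + k_j^*(x_j,y_j)$, which holds because $h_0 = 1$, and to exploit assumption (\hb). Indeed $\sum_{j\in\N}|\alpha_{0,j}-1| < \infty$ forces $\alpha_{0,j}\to 1$ and $\sum_{j\in\N}|\log \alpha_{0,j}| < \infty$, so the product $\prod_{j=1}^\infty \alpha_{0,j}$ converges to a positive limit; hence $P := \prod_{j=1}^\infty \alpha_{0,j}^{-1} \in {]0,\infty[}$ exists, and both $\sup_{j}\alpha_{0,j}$ and $(\inf_j \alpha_{0,j})^{-1}$ are finite. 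I would then write each factor as $k_j(x_j,y_j) = \alpha_{0,j}^{-1}\bigl(1+\alpha_{0,j}k_j^*(x_j,y_j)\bigr)$, which separates the harmless prefactor $\prod \alpha_{0,j}^{-1}$ from the genuinely $\bx,\by$-dependent part.

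First I would prove that $\X$ is an admissible domain. Since each $k_j^*$ is a reproducing kernel, the Cauchy--Schwarz inequality gives $|k_j^*(x_j,y_j)| \le k_j^*(x_j,x_j)^{1/2}k_j^*(y_j,y_j)^{1/2}$, and a second application of Cauchy--Schwarz for series yields, for $\bx,\by\in\X$,
\[
\sum_{j\in\N}|k_j^*(x_j,y_j)| \le \Bigl(\sum_{j\in\N}k_j^*(x_j,x_j)\Bigr)^{1/2}\Bigl(\sum_{j\in\N}k_j^*(y_j,y_j)\Bigr)^{1/2} < \infty.
\]
As $\sup_j\alpha_{0,j}<\infty$, the series $\sum_j \alpha_{0,j}k_j^*(x_j,y_j)$ is absolutely convergent, so $\prod_{j=1}^J\bigl(1+\alpha_{0,j}k_j^*(x_j,y_j)\bigr)$ converges; multiplying by the convergent product $\prod_{j=1}^J\alpha_{0,j}^{-1}$ shows that $\prod_{j=1}^J k_j(x_j,y_j)$ converges.

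For maximality I would restrict to the diagonal $\bx=\by$, where the terms $\alpha_{0,j}k_j^*(x_j,x_j)$ are nonnegative. Then $\prod_{j=1}^J\bigl(1+\alpha_{0,j}k_j^*(x_j,x_j)\bigr)$ converges to a finite limit if and only if $\sum_j \alpha_{0,j}k_j^*(x_j,x_j)<\infty$, and since $\alpha_{0,j}$ is bounded above and below away from $0$ this is equivalent to $\sum_j k_j^*(x_j,x_j)<\infty$, i.e.\ $\bx\in\X$; otherwise the product diverges to $+\infty$. Consequently $\prod_{j=1}^J k_j(x_j,x_j)$ converges precisely when $\bx\in\X$. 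As any domain on which $K$ is defined must in particular render $K(\bx,\bx)$ convergent for each of its points, every such domain is contained in $\X$, which yields maximality.

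Finally, for the expansion \eqref{g30} I would expand the finite product into a sum over subsets,
\[
\prod_{j=1}^J k_j(x_j,y_j) = \Bigl(\prod_{l=1}^J\alpha_{0,l}^{-1}\Bigr)\sum_{\fu\subseteq\{1,\dots,J\}}\Bigl(\prod_{j\in\fu}\alpha_{0,j}\Bigr)k^*_\fu(\bx,\by),
\]
so that the coefficient of $k^*_\fu(\bx,\by)$ tends to $c_\fu$ as $J\to\infty$. Absolute convergence of the candidate series follows from $|c_\fu|\,|k^*_\fu(\bx,\by)| \le P\prod_{j\in\fu}b_j$, where $b_j := \alpha_{0,j}\,k_j^*(x_j,x_j)^{1/2}k_j^*(y_j,y_j)^{1/2}\ge 0$, combined with the identity $\sum_{\fu\in\bU}\prod_{j\in\fu}b_j = \prod_{j=1}^\infty(1+b_j)$, which is finite because $\sum_j b_j<\infty$ by the same Cauchy--Schwarz estimate as above. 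Using $\prod_{l=1}^J\alpha_{0,l}^{-1}\to P$ together with this absolute convergence, I would pass to the limit $J\to\infty$ and identify it as $\sum_{\fu\in\bU}c_\fu k^*_\fu(\bx,\by)$. The main obstacle I anticipate is precisely this last step: one must rearrange the finite product into the subset-indexed sum, keep track of the prefactor $\prod_{l=1}^J\alpha_{0,l}^{-1}$ as it converges to $P$, and justify interchanging $\lim_{J\to\infty}$ with the sum over $\bU$. The bound by $\prod_j(1+b_j)$ is what legitimizes this exchange and simultaneously delivers the asserted absolute convergence; everything else reduces to the two Cauchy--Schwarz inequalities and the product criterion furnished by (\hb).
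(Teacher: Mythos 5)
Your proof is correct and follows essentially the same route as the paper: the same Cauchy--Schwarz estimates $|k_j^*(x_j,y_j)|\le k_j^*(x_j,x_j)^{1/2}k_j^*(y_j,y_j)^{1/2}$ plus Cauchy--Schwarz for series, the same subset expansion of the finite product, and the same convergence criterion on the diagonal. The only cosmetic difference is that you carry the factorization $k_j=\alpha_{0,j}^{-1}(1+\alpha_{0,j}k_j^*)$ through the whole argument, whereas the paper first treats the case $\alpha_{0,j}=1$ and then reduces the general case to it via exactly that identity.
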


\begin{proof}
At first, we consider the particular case that $\alpha_{0,j}=1$
for every $j \in \N$.
For $J \in \N$ let $\bUJ$ denote the power set of 
$\{1,\dots,J\}$.
For $\bx,\by \in \R^\N$ we have
\[
\prod_{j=1}^J k_j(x_j,y_j) =
\prod_{j=1}^J (1+ 
k^*_j(x_j,y_j)).
\]
Since 
\[
\ln \left( \prod_{j=1}^J (1+ k^*_j(x_j,x_j)) \right) = 
\sum_{j=1}^J \ln (1+ k^*_j(x_j,x_j))
\]
and $t/2 \le \ln(1+t) \le t$ for all $0\le t\le 1$, we conclude that
$\prod_{j=1}^J k_j(x_j,x_j)$ converges as $J \to \infty$
if and only if $\bx \in \X$.
Let $\bx,\by \in \X$. 
Using
\begin{equation}\label{g44}
|k^*_j(x_j,y_j)| \leq \sqrt{k^*_j(x_j,x_j)}\cdot \sqrt{k_j^*(y_j,y_j)}
\end{equation}
and the Cauchy Schwarz inequality for sums, we obtain
\begin{equation*}
\begin{split}
\sum_{\fu \in \bUJ} \prod_{j \in \fu} |k^*_j(x_j,y_j)|
&\leq \left( 
\sum_{\fu \in \bUJ} \prod_{j \in \fu} k^*_j(x_j,x_j) \right)^{1/2} \cdot
\left( 
\sum_{\fu \in \bUJ} \prod_{j \in \fu} k^*_j(y_j,y_j) \right)^{1/2}\\
&=
\left( \prod_{j=1}^J (1+  k^*_j(x_j,x_j)) \right)^{1/2} \cdot
\left( \prod_{j=1}^J (1+  k^*_j(y_j,y_j)) \right)^{1/2}.
\end{split}
\end{equation*}
Consequently,
\[
\prod_{j=1}^J (1+  k^*_j(x_j,y_j))
 =  \sum_{\fu \in \bUJ} \prod_{j \in \fu} 
k^*_j(x_j,y_j)
\]
is absolutely convergent as $J \to \infty$.
We conclude that
$\X$ is the maximal domain for $K$, and 
\eqref{g30} is satisfied with absolute convergence.

The general case is easily reduced to the particular case,
since 
$\prod^J_{j=1} \alpha_{0,j}$ converges and 
$\lim_{j \to \infty} \alpha_{0,j}=1$ due to (\hb) and since
\begin{equation}\label{g51}
k_j = \alpha_{0,j}^{-1} \cdot (1 + \alpha_{0,j} \cdot k_j^*).
\qedhere
\end{equation}
\end{proof}

Observe that $\X$ does not depend on the Fourier weights
$\alpha_{0,j}$.

\begin{defi}
 Assume that {\rm (\ha)} and {\rm (\hb)} are satisfied. The reproducing kernel  $K$ defined by \eqref{g27} on the maximal domain 
$\X$ given by \eqref{g30a}
is called a \emph{Hermite kernel} and the Hilbert space $H(K)$ is called a \emph{Hermite space} of functions of  infinitely many real variables.
\end{defi}

Let $\bx,\by \in \X$. It follows from \eqref{g44} that
$\sum_{j \in \N} |k_j^*(x_j,y_j)| < \infty$. 
If (\ha) and (\hb) are satisfied, then
the product in \eqref{g27} is convergent in the sense of the definition 
in Section~\ref{as1},
and $K(\bx,\by)=0$ is equivalent to the existence of $j \in \N$
with $k_j(x_j,y_j)=0$.

The Hermite space $H(K)$ is, in the sense of 
Theorem~\ref{ta1}, the incomplete tensor product $H^{(\ba_0^{-1/2})}$ of the 
Hermite spaces $H(k_j)$ based on the constant functions 
$\alpha_{0,j}^{-1/2}$ for $j \in \N$ as unit vectors as in \eqref{g66}. 

We now analyze the maximal domain $\X$ in more detail. 
Put $\ba_\nu^{-1} := (\alpha_{\nu,j}^{-1})_{j \in \N}$ for $\nu \in \N$,
and observe that (\hc) implies $\sum_{j \in \N}
\alpha_{\nu,j}^{-1} < \infty$.
The space $\ell^{2\nu}(\ba_\nu^{-1})$ consists of all sequences 
$\bx \in \R^\N$ such that
\[
\sum_{j \in \N}  \alpha_{\nu,j}^{-1} \cdot x_j^{2\nu} < \infty.
\]
In the sequel, we consider the countable product $\mu$ of the standard 
normal distribution $\mu_0$ on $\R^\N$.

\begin{prop}\label{lem:domain2}
Assume that {\rm (\ha)} is satisfied.
Then we have 
\[
 \X \subseteq \bigcap_{\nu\in\N} \ell^{2\nu}(\ba_\nu^{-1})
\subsetneq \R^\N.
\]
Assume that {\rm (\ha)} and {\rm(\hc)} are satisfied.
Then we have
\[
\mu(\X)=1 
\]
and
\[
\ell^\infty \subsetneq \X. 
\]
\end{prop}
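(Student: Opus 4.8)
The plan is to establish the four assertions in turn, writing throughout $t_j := k_j^*(x_j,x_j)$, so that $\bx \in \X$ means precisely $\sum_{j\in\N} t_j < \infty$. Since $k_j^*(x_j,x_j)=\sum_{\nu\ge1}\alpha_{\nu,j}^{-1}h_\nu(x_j)^2$ is a sum of nonnegative terms, each single term is dominated by $t_j$; in particular $\alpha_{\nu,j}^{-1}|h_\nu(x_j)|^2\le t_j$ for every $\nu\ge1$, and for $\nu=1$ this reads $\alpha_{1,j}^{-1}x_j^2\le t_j$ because $h_1(x)=x$. These inequalities, together with the monotonicity $\alpha_{1,j}\le\alpha_{\nu,j}$ from (\ha), will drive the first containment.

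For the inclusion $\X\subseteq\ell^{2\nu}(\ba_\nu^{-1})$ I fix $\nu\ge1$ and expand the monomial in the Hermite basis, $x^\nu=\sum_{m=0}^\nu a_{\nu,m}h_m(x)$ with finitely many real coefficients, so that Cauchy-Schwarz gives $x^{2\nu}\le C_\nu\sum_{m=0}^\nu h_m(x)^2$ with $C_\nu:=\sum_{m=0}^\nu a_{\nu,m}^2$. The naive bound $\alpha_{\nu,j}^{-1}x_j^{2\nu}\le C_\nu\alpha_{\nu,j}^{-1}\sum_{m=0}^\nu h_m(x_j)^2$ handles every index $m\ge1$ after summing over $j$, since $\alpha_{\nu,j}^{-1}h_m(x_j)^2\le\alpha_{m,j}^{-1}h_m(x_j)^2\le t_j$. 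The obstacle, and the one place where (\ha) alone seems too weak, is the constant term $m=0$, which produces $\sum_j\alpha_{\nu,j}^{-1}$ and need not converge. I plan to circumvent this by splitting $\N$ according to $|x_j|<1$ and $|x_j|\ge1$. On $\{|x_j|<1\}$ one has $x_j^{2\nu}\le x_j^2$, so $\alpha_{\nu,j}^{-1}x_j^{2\nu}\le\alpha_{1,j}^{-1}x_j^2\le t_j$ directly; on $\{|x_j|\ge1\}$ the monomial expansion applies, and now the troublesome $m=0$ contribution is controlled term by term, since $\alpha_{\nu,j}^{-1}\le\alpha_{1,j}^{-1}\le\alpha_{1,j}^{-1}x_j^2\le t_j$ there. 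Summing both regimes yields finiteness. The properness $\bigcap_\nu\ell^{2\nu}(\ba_\nu^{-1})\subsetneq\R^\N$ is then immediate: the intersection is contained in $\ell^2(\ba_1^{-1})$, and any weighted $\ell^2$-space with strictly positive weights misses, e.g., the sequence $x_j:=\alpha_{1,j}^{1/2}$.

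For $\mu(\X)=1$ I will use a first-moment argument under (\ha) and (\hc). Writing $s_j:=\sum_{\nu\in\N}\alpha_{\nu,j}^{-1}$, orthonormality of the $h_\nu$ in $L^2(\mu_0)$ and Tonelli give $\int_\R k_j^*(x,x)\,d\mu_0(x)=s_j$, hence $\int_{\R^\N}\sum_{j\ge j_0}k_j^*(x_j,x_j)\,d\mu(\bx)=\sum_{j\ge j_0}s_j$, which is finite by (\hc). A nonnegative integrand with finite integral is finite $\mu$-almost everywhere, so $\sum_{j\ge j_0}k_j^*(x_j,x_j)<\infty$ for $\mu$-a.e.\ $\bx$; the finite head $\sum_{j<j_0}k_j^*(x_j,x_j)$ is finite for every $\bx$ because each $k_j^*(x,x)\le k_j(x,x)<\infty$ under (\ha) by Lemma~\ref{l42}, even though the individual $s_j$ with $j<j_0$ may be infinite. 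Adding the two pieces gives $\mu(\X)=1$.

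Finally, for $\ell^\infty\subseteq\X$ I will bound, for $\|\bx\|_\infty\le M$, each $k_j^*(x_j,x_j)\le e^{M^2/2}\,s_j$ via Cram\'er's inequality \eqref{eq11}, so that $\sum_{j\ge j_0}k_j^*(x_j,x_j)\le e^{M^2/2}\sum_{j\ge j_0}s_j<\infty$ by (\hc), the head again being finite term by term. To see the inclusion is strict I will build an unbounded element of $\X$ with sparse support: since (\hc) forces $s_j\to0$, I can choose indices $j_1<j_2<\cdots$ with $j_k\ge j_0$ and $e^{k^2/2}\,s_{j_k}\le2^{-k}$, and set $x_{j_k}:=k$ and $x_j:=0$ otherwise, whence $\sum_j k_j^*(x_j,x_j)\le\sum_k 2^{-k}<\infty$ while $\bx\notin\ell^\infty$. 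The only genuinely delicate point in the whole argument is the $m=0$ term in the first containment; everything else is a routine combination of the termwise domination by $t_j$, monotonicity, Cram\'er's inequality, and Tonelli.
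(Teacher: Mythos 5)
Your proof is correct, and two of its four parts ($\mu(\X)=1$ via Tonelli and monotone convergence under (\hc), and $\ell^\infty\subseteq\X$ via Cram\'er's inequality \eqref{eq11}) coincide with the paper's argument. The other two parts take a genuinely different route. For the containment $\X\subseteq\bigcap_{\nu}\ell^{2\nu}(\ba_\nu^{-1})$ the paper argues by induction on $\nu$, writing $h_\nu^2(x)=\sum_{\kappa=0}^\nu\beta_\kappa x^{2\kappa}$ and solving for $x^{2\nu}$; the constant term is harmless there because $\beta_0=h_\nu(0)^2\ge 0$ and $\beta_\nu>0$, so it enters the upper bound with a favourable sign. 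You instead expand $x^\nu$ in the Hermite basis, apply Cauchy--Schwarz, and dispose of the problematic $m=0$ contribution by splitting into $|x_j|<1$ (where $x_j^{2\nu}\le x_j^2$ is controlled by the $\nu=1$ term of $k_j^*$) and $|x_j|\ge 1$ (where $\alpha_{\nu,j}^{-1}\le\alpha_{1,j}^{-1}x_j^2\le t_j$). This avoids the induction entirely and makes explicit a point the paper leaves implicit; the price is the case distinction. For the strictness $\ell^\infty\subsetneq\X$ the paper invokes $\mu(\ell^\infty)=0$ together with $\mu(\X)=1$, whereas you construct an explicit unbounded element of $\X$ with sparse support using $s_j:=\sum_{\nu\in\N}\alpha_{\nu,j}^{-1}\to 0$; your construction is more elementary and does not rely on the measure-theoretic part of the proposition, though your displayed bound $\sum_j k_j^*(x_j,x_j)\le\sum_k 2^{-k}$ should also account for the indices $j\ge j_0$ with $x_j=0$ (contributing at most $\sum_{j\ge j_0}s_j<\infty$ by (\hc)) and for the finitely many $j<j_0$; this is a cosmetic omission, not a gap.
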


\begin{proof}
Put $\ell^{2\nu} := \ell^{2\nu}(\ba_\nu^{-1})$, 
and observe that already $\ell^2 \subsetneq \R^\N$.
We prove the remaining part of the first statement by induction.

Since $h_1(x_j) = x_j$ and 
$\alpha_{1,j}^{-1} \cdot |h_1(x_j)|^2 \leq k^{*}_j(x_j,x_j)$, we
obtain $\X \subseteq \ell^2$.
Let $\bx \in \X$, let $\nu > 1$, and
assume that $\bx \in \ell^{2 \kappa}$ holds for all
$1 \leq \kappa < \nu$. 
Since $h_\nu$ is either an even or an odd polynomial, $h_\nu^2$ is 
an even polynomial of degree $2 \nu$ and can thus be 
written as
\[
h^2_\nu(x) = \sum^\nu_{\kappa=0} \beta_\kappa \cdot x^{2\kappa}
\]
with suitable $\beta_\kappa \in \R$, where $\beta_\nu \neq 0$.
Consequently,
\[
x^{2\nu} = 1/\beta_\nu \cdot \left( |h_\nu(x)|^2 -
\sum_{\kappa=0}^{\nu-1} \beta_\kappa \cdot x^{2 \kappa}\right).
\]
Due to the second condition in (\ha) and our induction hypothesis 
we have for all $1 \leq \kappa < \nu$
\[
\sum_{j\in\N} \alpha^{-1}_{\nu,j} \cdot  x^{2\kappa}_j 
\le \sum_{j\in\N} \alpha^{-1}_{\kappa,j} \cdot  x^{2\kappa}_j 
< \infty.
\]
Furthermore,
\begin{equation*}
\sum_{j \in \N}  \alpha_{\nu,j}^{-1} \cdot |h_\nu(x_j)|^2 
\le \sum_{ j\in\N} k^*_j(x_j,x_j)
< \infty.
\end{equation*}
Altogether this yields
\[
\sum_{j\in \N} \alpha^{-1}_{\nu, j}  \cdot x^{2\nu}_j < \infty,
\]
implying that $\bx \in \ell^{2\nu}$,
which proves the first statement.

Since
\[
\int_{\R^\N} k_j^*(x_j,x_j) \, d \mu(\bx) = \sum_{\nu \in \N}
\alpha_{\nu,j}^{-1},
\]
we get 
\[
\int_{\R^\N} \sum_{j \geq j_0}k_j^*(x_j,x_j) \, d \mu(\bx) < \infty
\]
from (\hc) using the monotone convergence theorem. Hence
$\sum_{j \in \N} k_j^*(x_j,x_j) < \infty$ holds $\mu$-a.e., i.e.,
$\mu(\X)=1$. 
Combine Cram\'er's inequality, see Lemma~\ref{l41},
and (\hc) to conclude that $\ell^\infty \subseteq \X$. 
Finally, $\ell^\infty \neq \X$, since $\mu(\ell^\infty)=0$.
\end{proof}

Recall that $\bN$ denotes the set of all sequences 
$\bn := (\nu_j)_{j \in \N}$ in $\N_0$ such that 
$\{j \in \N \colon \nu_j \neq 0\}$ is finite.
For $\bn \in \bN$ we set
\[
h_\bn (\bx) := \prod_{j=1}^\infty h_{\nu_j}(x_j)
\]
for $\bx \in \X$, which is well-defined since $h_0=1$. Moreover,
we set
\[
\alpha_\bn := \prod_{j=1}^\infty \alpha_{\nu_j,j},
\]
which is well-defined if (\hb) is assumed.

In addition to the original Fourier weights we also consider
\[
\alpha_{\nu,j}^\prime :=
\begin{cases}
1, & \text{if $\nu=0$,}\\
\alpha_{\nu,j}, & \text{otherwise,}
\end{cases}
\]
and we use $\alpha^\prime_\bn$, $K^\prime$, and $k_j^\prime$ to denote the
corresponding products and the corresponding reproducing kernels,
respectively. 
The counterpart to (\hb) is trivially satisfied
for the new Fourier weights, and the counterparts to 
(\ha) and (\hc)
follow from the respective properties of the 
original Fourier weights.

\begin{lemma}\label{l44}
Assume that {\rm (\hb)} is satisfied. Then we have
\[
c_{\min} := \inf_{\bn \in \bN} \frac{\alpha_\bn}{\alpha^\prime_\bn} =
\prod_{j=1}^\infty \min (\alpha_{0,j},1) > 0
\]
and
\[
c_{\max} := \sup_{\bn \in \bN} \frac{\alpha_\bn}{\alpha^\prime_\bn} =
\prod_{j=1}^\infty \max (\alpha_{0,j},1) < \infty. 
\]
\end{lemma}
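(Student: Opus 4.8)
The plan is to reduce the quotient $\alpha_\bn/\alpha^\prime_\bn$ to an infinite product over a cofinite index set and then to optimize over such sets. Fix $\bn = (\nu_j)_{j \in \N} \in \bN$ and let $\fu := \{ j \in \N \colon \nu_j > 0 \} \in \bU$ denote its (finite) support. Since $\alpha^\prime_{0,j} = 1$ while $\alpha^\prime_{\nu,j} = \alpha_{\nu,j}$ for $\nu \geq 1$, every factor with $\nu_j > 0$ cancels in the quotient, whereas each factor with $\nu_j = 0$ contributes $\alpha_{0,j}$ to the numerator and $1$ to the denominator; hence
\[
\frac{\alpha_\bn}{\alpha^\prime_\bn} = \prod_{j \notin \fu} \alpha_{0,j}.
\]
As $\bn$ ranges over $\bN$, its support $\fu$ ranges over all of $\bU$, so it suffices to compute $\inf_{\fu \in \bU} \prod_{j \notin \fu} \alpha_{0,j}$ and the corresponding supremum.

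First I would record that (\hb) makes the two candidate products converge to positive, finite limits: the elementary inequalities $|\min(\alpha_{0,j},1)-1| \leq |\alpha_{0,j}-1|$ and $|\max(\alpha_{0,j},1)-1| \leq |\alpha_{0,j}-1|$ turn (\hb) into absolute convergence of the associated sums, whence $\prod_{j=1}^\infty \min(\alpha_{0,j},1) > 0$ and $\prod_{j=1}^\infty \max(\alpha_{0,j},1) < \infty$.

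For the lower bound on $c_{\min}$ I would estimate, for an arbitrary $\fu \in \bU$,
\[
\prod_{j \notin \fu} \alpha_{0,j} \geq \prod_{j \notin \fu} \min(\alpha_{0,j},1) \geq \prod_{j=1}^\infty \min(\alpha_{0,j},1),
\]
the last inequality because the omitted factors $\min(\alpha_{0,j},1)$ with $j \in \fu$ are each at most $1$; thus $c_{\min} \geq \prod_{j=1}^\infty \min(\alpha_{0,j},1)$. For the matching upper bound I would take the \emph{greedy} finite sets $\fu_n := \{ j \leq n \colon \alpha_{0,j} > 1\}$ and let $n \to \infty$: the tail $\prod_{j>n}\alpha_{0,j}$ of the convergent product tends to $1$, while $\prod_{j \leq n,\ \alpha_{0,j} \leq 1}\alpha_{0,j}$ tends to $\prod_{j=1}^\infty \min(\alpha_{0,j},1)$, so that $\prod_{j \notin \fu_n}\alpha_{0,j}$ converges to the claimed value and $c_{\min} \leq \prod_{j=1}^\infty \min(\alpha_{0,j},1)$. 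The formula for $c_{\max}$ then follows from the symmetric argument, with $\min$ replaced by $\max$ and with $\fu_n := \{ j \leq n \colon \alpha_{0,j} < 1\}$.

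The argument involves no serious obstacle; the only point needing care is the cancellation identity $\alpha_\bn/\alpha^\prime_\bn = \prod_{j \notin \fu}\alpha_{0,j}$, which hinges on the observation that the quotient depends on $\bn$ solely through its support. This reduces the assertion to an elementary extremal problem for a cofinite product, whose extreme values are attained in the limit by the greedy choices $\fu_n$ and are pinned down by the convergence of the product tails guaranteed by (\hb).
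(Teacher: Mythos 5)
Your proof is correct and follows essentially the same route as the paper's (very terse) argument: both reduce $\alpha_\bn/\alpha^\prime_\bn$ to the cofinite product $\prod_{j \notin \fu} \alpha_{0,j}$ over the support $\fu$ of $\bn$ and then optimize, the paper's displayed quantity $\prod_{j=1}^J \min(\alpha_{0,j},1)\cdot\prod_{j=J+1}^\infty\alpha_{0,j}$ being exactly the value at your greedy set $\fu_J$. Your write-up just makes explicit the cancellation identity, the two-sided bounds, and the tail convergence $\prod_{j>J}\alpha_{0,j}\to 1$ that the paper leaves implicit.
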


\begin{proof}
Use (\hb) to conclude that
\[
\inf_{\bn \in \bN} \frac{\alpha_\bn}{\alpha^\prime_\bn} =
\lim_{J \to \infty} \biggl(\prod_{j=1}^J \min(\alpha_{0,j},1) \cdot
\prod_{j=J+1}^\infty \alpha_{0,j} \biggr)=
\prod_{j=1}^\infty \min (\alpha_{0,j},1) > 0.
\]
The second statement is verified analogously.
\end{proof}

Remark~\ref{onb} and  Theorem \ref{ta2} show that the functions $h_\bn$ 
with $\bn \in \bN$ form an orthonormal basis of 
$L^2(\mu) := L^2(\R^\N,\mu)$, which may obviously be identified 
with $L^2(\X,\mu)$ if $\mu(\X)=1$.
Together with the following lemma,
Proposition~\ref{l24} provides a 
characterization of the Hilbert space $H(K)$ as a linear subspace of
$L^2(\mu)$.

\begin{lemma}\label{l77}
Assume that 
{\rm (\ha)}--{\rm (\hc)} are satisfied.
Then we have 
\[
\inf_{\bn \in \bN} \alpha_\bn > 0
\]
and
\begin{equation}\label{g22}
\sum_{\bn \in \bN} \alpha_\bn^{-1} \cdot |h_\bn(\bx)|^2 < \infty
\end{equation}
for $\bx \in \X$
as well as
\begin{equation}\label{g22a}
K(\bx, \by) = \sum_{\bn \in \bN} \alpha_{\bn}^{-1} 
\cdot h_{\bn}(\bx) \cdot h_{\bn}(\by)
\end{equation}
for $\bx,\by \in \X$.
\end{lemma}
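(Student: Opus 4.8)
The plan is to prove the three assertions in turn, reducing the latter two to the expansion of $K$ already established in Lemma~\ref{l51}. The organizing principle throughout is to index the countable family $\bN$ by the (finite) support of a multi-index: every $\bn \in \bN$ is uniquely encoded by its support $\fu := \{ j \in \N \colon \nu_j \neq 0\} \in \bU$ together with the tuple $(\nu_j)_{j \in \fu}$ of its positive entries. Since $h_0 = 1$, a short computation then shows, for $\bn$ with support $\fu$,
\[
\alpha_\bn^{-1} \cdot h_\bn(\bx) \cdot h_\bn(\by)
= c_\fu \cdot \prod_{j \in \fu} \alpha_{\nu_j,j}^{-1} \cdot h_{\nu_j}(x_j) \cdot h_{\nu_j}(y_j),
\]
where the constant arises exactly because the indices $j \notin \fu$, for which $\nu_j = 0$, contribute the factor $\prod_{j \notin \fu} \alpha_{0,j}^{-1} = c_\fu$. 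This is the bookkeeping that matches the combinatorics of \eqref{g30}.

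For the first assertion I would write $\alpha_\bn = (\alpha_\bn/\alpha^\prime_\bn) \cdot \alpha^\prime_\bn$ and bound each factor below uniformly in $\bn$. By Lemma~\ref{l44} the quotient is at least $c_{\min} > 0$. For the remaining factor, the monotonicity in (\ha) gives $\alpha^\prime_\bn = \prod_{j \in \fu} \alpha_{\nu_j,j} \geq \prod_{j \in \fu} \alpha_{1,j}$, so it suffices to bound $\prod_{j \in \fu} \alpha_{1,j}$ below over all finite $\fu$. Here (\hc) enters: it forces $\alpha_{1,j} \to \infty$, hence only finitely many indices $j$ satisfy $\alpha_{1,j} < 1$, and therefore $\inf_{\bn} \alpha^\prime_\bn \geq \prod_{j \colon \alpha_{1,j}<1} \alpha_{1,j} > 0$ as a finite product of positive reals. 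Multiplying the two bounds yields $\inf_{\bn} \alpha_\bn > 0$.

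For \eqref{g22} I would sum the displayed identity (with $\by = \bx$) first over all $\bn$ with a fixed support $\fu$ and then over $\fu \in \bU$. Since all terms are nonnegative, Tonelli's theorem makes this regrouping unconditionally valid, and the inner sum factorizes as
\[
\sum_{\substack{\bn \in \bN\\ \operatorname{supp}(\bn)=\fu}} \alpha_\bn^{-1} \cdot h_\bn(\bx)^2
= c_\fu \cdot \prod_{j \in \fu} \Bigl( \sum_{\nu \in \N} \alpha_{\nu,j}^{-1} \cdot h_\nu(x_j)^2 \Bigr)
= c_\fu \cdot k^*_\fu(\bx,\bx).
\]
Summing over $\fu$ and invoking \eqref{g30} with $\by = \bx$ identifies the total with $K(\bx,\bx) = \prod_{j=1}^\infty k_j(x_j,x_j)$, which is finite precisely because $\bx$ lies in the maximal domain $\X$; this proves \eqref{g22}.

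For \eqref{g22a} the same regrouping is carried out for general $\by$, but now the terms are signed, so I first need absolute convergence. This is where the only real subtlety lies: I would obtain it from \eqref{g22} via the Cauchy--Schwarz inequality, $\sum_{\bn} \alpha_\bn^{-1} |h_\bn(\bx)|\,|h_\bn(\by)| \leq K(\bx,\bx)^{1/2} \cdot K(\by,\by)^{1/2} < \infty$, which legitimizes both the regrouping by support and the factorization of each inner sum into $\prod_{j \in \fu} k^*_j(x_j,y_j) = k^*_\fu(\bx,\by)$. The total then equals $\sum_{\fu \in \bU} c_\fu \cdot k^*_\fu(\bx,\by)$, which by \eqref{g30} is exactly $K(\bx,\by)$, giving \eqref{g22a}. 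The main obstacle is thus not any single estimate but the careful justification of rearranging and factoring the multi-indexed series; once \eqref{g22} supplies absolute convergence through Cauchy--Schwarz, everything reduces cleanly to the already-proven expansion \eqref{g30}.
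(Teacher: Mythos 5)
Your proof is correct, but for the two series identities it takes a genuinely different route from the paper. The paper first reduces to the normalized case $\alpha_{0,j}=1$ via Lemma~\ref{l44} and \eqref{g51}, and then invokes an external result (Lemma~B.1 of the cited paper by Gnewuch et al.) twice to convert the sum over $\bN$ into the infinite product defining $K$; in particular it obtains the equivalence of \eqref{g22} with $\bx\in\X$ essentially for free from that lemma. You instead regroup the sum over $\bN$ by the support $\fu$ of the multi-index, factor each inner sum into $c_\fu\cdot k^*_\fu$, and match the result against the expansion \eqref{g30} already proved in Lemma~\ref{l51} --- with Tonelli handling the nonnegative case \eqref{g22} and Cauchy--Schwarz supplying the absolute convergence needed to regroup the signed series in \eqref{g22a}. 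This is self-contained (no appeal to the external combinatorial lemma, no separate reduction step for general $\alpha_{0,j}$, since the constants $c_\fu$ absorb them) and all the rearrangement steps you flag are correctly justified. For the lower bound on $\alpha_\bn$ the two arguments are essentially the same: both isolate the finitely many indices where $\alpha_{1,j}<1$ (using (\hc) and the monotonicity in (\ha)) and control the zero-order factors via (\hb); you package the latter through the constant $c_{\min}$ of Lemma~\ref{l44}, the paper bounds $\alpha_\bn$ directly.
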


\begin{proof}
Obviously, (\hc) implies $\sum_{j \geq j_0} \alpha_{1,j}^{-1} <
\infty$. We use the second condition in (\ha) to conclude that 
there exists an integer $j_1 \geq j_0$ with 
\[
\inf_{j \geq j_1} \inf_{\nu \in \N} \alpha_{\nu,j} =
\inf_{j \geq j_1} \alpha_{1,j} 
\geq 1.
\]
Moreover, 
\[
c := \inf_{j=1,\dots,j_1-1} \inf_{\nu \in \N_0} \alpha_{\nu,j} > 0,
\] 
which follows from (\ha). Together with (\hb),
cf.\ Lemma \ref{l44}, this implies
\[
\inf_{\bn \in \bN}
\alpha_{\bn} \geq c^{j_1-1} \cdot \prod_{j=j_1}^\infty 
\min(\alpha_{0,j},1) > 0.
\]

Consider the particular case that $\alpha_{0,j}=1$
for every $j \in \N$. Let $\bx \in \R^\N$. 
According to \citet[Lem.~B.1]{GHHRW2020}, applied with 
$\beta_{\nu,j} := \alpha_{\nu,j}^{-1} \cdot |h_\nu(x_j)|^2$,
\eqref{g22} holds if and only if 
\[
\sum_{\nu,j \in \N} \alpha_{\nu,j}^{-1} \cdot |h_\nu(x_j)|^2 < \infty,
\]
and the latter is equivalent to $\bx \in \X$.
Let $\bx,\by \in \X$.
Applying \citet[Lem.~B.1]{GHHRW2020} again, this time with 
$\beta_{\nu,j} := \alpha_{\nu,j}^{-1} \cdot h_\nu(x_j) \cdot
h_\nu(y_j)$, we obtain \eqref{g22a}.

The general case is easily reduced to the particular case
by means of Lemma \ref{l44} and \eqref{g51}.
\end{proof}

\begin{lemma}\label{l43}
Assume that {\rm (\ha)} is satisfied. Then we have 
$H(k_j) = H(k_j^\prime)$ for $j\in\N$
as vector spaces, and the corresponding identity maps have norms
\begin{equation*}
 \| H(k_j) \hookrightarrow H(k_j^\prime) \| = 
\min(\alpha_{0,j},1)^{-1/2} \quad \text{and} \quad 
\| H(k_j^\prime) \hookrightarrow H(k_j) \| =  \max(\alpha_{0,j},1)^{1/2}.
\end{equation*}
Assume that {\rm (\ha)}--{\rm (\hc)} are satisfied.
Then we have $H(K) = H(K^\prime)$
as vector spaces, and the corresponding identity maps have norms
\begin{equation*}
\| H(K) \hookrightarrow H(K^\prime) \| = c_{\min}^{-1/2} 
\quad \text{and} \quad
\| H(K^\prime) \hookrightarrow H(K) \| =  c_{\max}^{1/2}. \\
\end{equation*}
\end{lemma}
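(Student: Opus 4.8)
The plan is to reduce both statements to the coefficient description of Hermite spaces furnished by Proposition~\ref{l24} and then to exploit that $k_j$ and $k_j^\prime$ (respectively $K$ and $K^\prime$) differ only through their Fourier weights, whose ratios are uniformly bounded above and below. I would treat the univariate and the infinite-variate assertions in parallel, as they share an identical structure.

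For the univariate claim I would invoke Proposition~\ref{l24} with $N=\N_0$ and $\fe_\nu=h_\nu$ to write each $f\in H(k_j)$ as $f=\sum_\nu c_\nu\cdot h_\nu$ with $\|f\|_{H(k_j)}^2=\sum_\nu \alpha_{\nu,j}\cdot|c_\nu|^2$, and, for the very same coefficients, $\|f\|_{H(k_j^\prime)}^2=\sum_\nu \alpha_{\nu,j}^\prime\cdot|c_\nu|^2$. Since $\alpha_{\nu,j}=\alpha_{\nu,j}^\prime$ for $\nu\ge 1$ and the two differ only by the finite positive factor $\alpha_{0,j}$ at $\nu=0$, the two summability conditions are equivalent, giving $H(k_j)=H(k_j^\prime)$ as vector spaces. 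The elementary two-sided bound $\min(\alpha_{0,j},1)\cdot\alpha_{\nu,j}^\prime\le\alpha_{\nu,j}\le\max(\alpha_{0,j},1)\cdot\alpha_{\nu,j}^\prime$, valid for every $\nu$, then yields $\min(\alpha_{0,j},1)\cdot\|f\|_{H(k_j^\prime)}^2\le\|f\|_{H(k_j)}^2\le\max(\alpha_{0,j},1)\cdot\|f\|_{H(k_j^\prime)}^2$, hence the upper estimates $\|H(k_j)\hookrightarrow H(k_j^\prime)\|\le\min(\alpha_{0,j},1)^{-1/2}$ and $\|H(k_j^\prime)\hookrightarrow H(k_j)\|\le\max(\alpha_{0,j},1)^{1/2}$. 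To see that these are sharp I would test on basis functions: $f=h_0$ gives the norm ratio $\alpha_{0,j}^{-1/2}$ and $f=h_1$ gives ratio $1$, so the supremum defining $\|H(k_j)\hookrightarrow H(k_j^\prime)\|$ already attains $\max(\alpha_{0,j}^{-1/2},1)=\min(\alpha_{0,j},1)^{-1/2}$; the second operator norm follows symmetrically.

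For the infinite-variate claim I would run the same argument one level up. By Lemma~\ref{l77} the functions $h_\bn$, $\bn\in\bN$, form an orthonormal basis and $K(\bx,\by)=\sum_\bn \alpha_\bn^{-1}\cdot h_\bn(\bx)\cdot h_\bn(\by)$, so Proposition~\ref{l24} applies with $N=\bN$, $\fe_\bn=h_\bn$, and weights $\alpha_\bn$ (respectively $\alpha_\bn^\prime$ for $K^\prime$); thus $\|f\|_{H(K)}^2=\sum_\bn \alpha_\bn\cdot|c_\bn|^2$ and $\|f\|_{H(K^\prime)}^2=\sum_\bn \alpha_\bn^\prime\cdot|c_\bn|^2$. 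The decisive input is Lemma~\ref{l44}, which guarantees $0<c_{\min}=\inf_\bn \alpha_\bn/\alpha_\bn^\prime\le\sup_\bn \alpha_\bn/\alpha_\bn^\prime=c_{\max}<\infty$. The two-sided bound $c_{\min}\cdot\alpha_\bn^\prime\le\alpha_\bn\le c_{\max}\cdot\alpha_\bn^\prime$ gives at once the equality of the spaces and the estimates $\|H(K)\hookrightarrow H(K^\prime)\|\le c_{\min}^{-1/2}$ and $\|H(K^\prime)\hookrightarrow H(K)\|\le c_{\max}^{1/2}$. For sharpness I would again test on the basis, using $\|h_\bn\|_{H(K^\prime)}^2/\|h_\bn\|_{H(K)}^2=\alpha_\bn^\prime/\alpha_\bn$, so that the supremum over $\bn$ equals $(\inf_\bn \alpha_\bn/\alpha_\bn^\prime)^{-1}=c_{\min}^{-1}$, and analogously $\sup_\bn \alpha_\bn/\alpha_\bn^\prime=c_{\max}$.

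The only point requiring genuine care is this last attainment step, namely upgrading the uniform inequalities to the exact operator norms rather than mere bounds. This is precisely what the explicit test vectors $h_\bn$ deliver, together with the fact that $c_{\min}$ and $c_{\max}$ are defined in Lemma~\ref{l44} as the infimum and supremum of exactly the weight ratios $\alpha_\bn/\alpha_\bn^\prime$ that those test vectors realize; everything else is a routine consequence of the coefficient representation of the norms.
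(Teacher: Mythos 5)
Your proposal is correct and follows essentially the same route as the paper, whose proof simply cites Proposition~\ref{l24}, Lemma~\ref{l44}, and Lemma~\ref{l77} and leaves the coefficient computation implicit; you have filled in exactly those details, including the sharpness of the embedding norms via the test vectors $h_\nu$ and $h_\bn$. The only points worth stating explicitly are that the maximal domain $\X$ is unchanged under the passage from $\alpha_{0,j}$ to $1$ (so that $H(K)=H(K^\prime)$ makes sense as spaces of functions on the same set) and that the primed weights also satisfy the hypotheses of Proposition~\ref{l24}, both of which the paper records just before the lemma.
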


\begin{proof}
Let (\ha) hold, and let $j\in \N$. 
Proposition~\ref{l24} shows that $H(k_j) = H(k_j^\prime)$ and 
confirms the claimed values of the norms of the embeddings.

Let now {\rm (\ha)}--{\rm (\hc)} be satisfied.
We combine Proposition~\ref{l24}, Lemma \ref{l44},
and Lemma \ref{l77} to obtain $H(K^\prime) = H(K)$ as well as the 
claimed values of the norms of the embeddings.
\end{proof}

\begin{rem}\label{rem:ANOVA}
Assume that {\rm (\ha)}--{\rm (\hc)} are satisfied.
For each $j\in\N$ we have
\[
\int_{\R} k^\ast_j(x,y) \, d \mu_0(y) = 0
\]
for all $x\in \R$, cf.\ Remark~\ref{r42}. This property 
induces a function space decomposition of the tensor product 
space $H(K)$ of ANOVA-type, cf.\ \citet[Rem.~2.12]{BG12}. Let us make 
this more precise:
Due to \eqref{g30} we may write $H(K)$ as orthogonal sum
\[
H(K) = \bigoplus_{\fu\in \bU} H(c_{\fu}\cdot k^\ast_{\fu}).
\]
Note that due to $c_{\fu} >0$ we have 
$H(c_{\fu}\cdot k^\ast_{\fu}) = H(k^\ast_{\fu})$
as vector spaces for all $\fu\in\bU$.
Hence each function $f\in H(K)$ can be uniquely represented as
\[
f = \sum_{\fu\in\bU} f_{\fu},
\]
where $f_{\fu} \in H(k^\ast_{\fu})$ for each $\fu\in\bU$,
and its norm is given by
\[
\|f\|^2_{H(K)} = 
\sum_{\fu\in\bU} \|f_{\fu} \|^2_{H(c_{\fu}\cdot k^\ast_{\fu})}
= \sum_{\fu\in\bU}c_{\fu}^{-1} \cdot \|f_{\fu} \|^2_{H(k^\ast_{\fu})}.
\]
This function decomposition is the infinite-dimensional 
\emph{ANOVA-decomposition} of $f$. 
More precisely, we obtain 
\[
f_{\emptyset} = \int_{\X} f\, d\mu
\]
and, recursively, for arbitrary $\emptyset \neq \fu \in \bU$ 
\[
\phantom{\qquad\quad \bx\in \X,}
f_{\fu}(\bx) = 
\int_{\R^{\N\setminus \fu}} f(\bx_{\fu}, \by_{\N \setminus \fu}) 
\, d \mu_0^{\N \setminus \fu}(\by_{\N \setminus \fu}) - 
\sum_{\fv \subsetneq \fu} f_{\fv}(\bx),
\qquad\quad \bx\in \X,
\]
and 
\[
\|f\|^2_{L^2(\mu)} = 
\sum_{\fu\in\bU} \|f_{\fu} \|^2_{L^2(\mu)}
\]
as well as
\[
\sigma^2(f) = \sum_{\emptyset \neq \fu\in\bU} \sigma^2(f_{\fu}), 
\]
where $\sigma^2(\cdot)$ denotes the variance.
In particular, for $f \in H(K)$ the ANOVA-components $f_{\fu}$
with $\fu\in\bU$ are orthogonal in $H(K)$ as well as in $L^2(\mu)$, 
and each function $f_{\fu}$ depends only on the variables $x_j$
with $j\in \fu$.
\end{rem}

\begin{rem}\label{r21}
Infinite tensor products $K$ of reproducing kernels $k_j \colon D
\times D \to \R$ provide a convenient setting for the study of
computational problems for functions of infinitely many variables,
and most often the whole Cartesian product $D^\N$ is the natural
domain for the kernel $K$. 
The complexity of problems of this kind has 
first been analyzed in 
\citet{HicWan2001}, \citet{HicEtAl2010}, \citet{KuoEtAl10},
and we refer to, e.g., \citet{GHHRW2020}
for recent results and references. 
So far two different types of spaces of functions of infinitely many 
variables have been studied: First, \emph{weighted spaces}, where the 
weights model the importance of different groups of variables. This 
kind of weights was first introduced in \citet{SW98}.
Second, \emph{spaces of increasing smoothness} based on Fourier weights, 
which is the approach we follow in this paper. The effect of increasing 
smoothness on the tractability of multivariate problems was first studied 
in \citet{PapWoz10}.

Assume that $\alpha_{0,j}=1$ for every $j \in \N$.
Due to the representation \eqref{g30} the kernel $K$ is the superposition 
of finite tensor products of reproducing kernels $k_j^*$. 
In such a setting the 
\emph{multivariate decomposition method}
has been established as a powerful generic algorithm for
integration and $L^2$-approximation, if additionally
all of the $k_j^*$ are weighted anchored kernels,
which leads to a so-called anchored function space decomposition.
By definition, $k_j^*$ is \emph{anchored} at $x \in D$ if 
\[
k_j^*(x,x) = 0.
\]
We refer to the pioneering papers \citet{KuoEtAl10} and \citet{PW11}, 
and to \citet{GilEtAl2018} for a recent contribution.

Observe, however, that in the present setting of Hermite spaces
none of the kernels $k_j^*$ is anchored at any point $x \in \R$, 
since $h_1(x) = x$ for every $x \in \R$ and $h_2(0) \neq 0$.
As explained in Remark~\ref{rem:ANOVA}, instead of having an
anchored function space decomposition, $H(K)$ has an
ANOVA-decomposition. This prohibits the 
application of the standard error analysis of the
multivariate decomposition method developed in \citet{PW11} for 
integration and in \citet{Was12} for $L^2$-approximation and of standard
arguments to derive lower bounds for deterministic algorithms
on $H(K)$.

In the ANOVA setting the multivariate
decomposition method has been analyzed directly in \citet{DG13} 
for integration in the randomized setting 
and
indirectly via suitable embedding theorems in \citet{GneEtAl16,
GHHRW2020} for integration and $L^2$-approximation in the randomized and
the deterministic setting.

While \citet{DG13} and \citet{GneEtAl16} treat weighted spaces $H(K)$ 
and allow for domains $\X$ of $K$ that are proper subsets 
of $D^\N$,  the paper 
\citet{GHHRW2020} studies spaces of increasing smoothness whose kernels 
are defined on the whole Cartesian product $D^\N$.  It is
important to note that Hermite spaces $H(K)$ are never spaces of 
functions on the domain $\R^\N$, see Proposition~\ref{lem:domain2}.
In a forthcoming paper we will develop the approach
from \citet{GHHRW2020} further and apply it, in particular,
to Hermite spaces $H(K)$.
\end{rem}

\subsection{Two Examples}\label{s3.3}

In the sequel we consider two particular kinds of Fourier weights 
$\alpha_{\nu,j}$ and the corresponding Hermite spaces
$H(k_j)$ and $H(K)$, cf.\ Examples \ref{r20} and \ref{r22}. 

\begin{itemize}
\item[(PG)]
Let $r_j > 1/2$ for $j \in \N$ such that
\begin{equation}\label{g61}
\sum_{j \in \N} 2^{-r_j} < \infty.
\end{equation}
The Fourier weights with a \emph{polynomial growth} are given by 
\[
\alpha_{\nu,j} := (\nu+1)^{r_j}
\]
for $\nu,j \in \N$ and 
by any choice of
$\alpha_{0,j}$ satisfying (\hb).
\item[(EG)]
Let $r_j > 0$ for $j \in \N$ such that \eqref{g61} is
satisfied. Moreover, let $b_j > 0$ such that 
\[
\inf_{j \in \N} b_j > 0.
\]
The Fourier weights with a \emph{(sub-)exponential growth} are given by 
\[
\alpha_{\nu,j} := 2^{r_j \cdot \nu^{b_j}}
\]
for $\nu,j \in \N$ and 
by any choice of
$\alpha_{0,j}$ satisfying (\hb).
\end{itemize}

Finite tensor products of Hermite spaces with different kinds
of Fourier weights have been studied, e.g., in the following
papers.
Fourier weights with a polynomial growth are considered in 
\citet{IL15} for $r_j > 1$ and in \citet{DILP18} for $r_j \in \N$.
For Fourier weights with a (sub-)exponential growth we refer to
\citet{IL15}, \citet{IKLP15} as well as \citet{IKPW16b}, 
and \citet{IKPW16a}
for the cases $b_j = 1$, $b_j \geq 1$, and $\inf_{j \in \N}
b_j > 0$, respectively.

\begin{rem}\label{r50}
Let
\[
\hat{r} := \liminf_{j \to \infty} \frac{r_j}{\ln(j)}.
\]
Then $\hat{r} > 1/\ln(2)$ is a sufficient, and $\hat{r} \geq 1/\ln(2)$
is a necessary condition for \eqref{g61} to hold.
See, e.g., \citet[Lem.~B.3]{GHHRW2020}.
\end{rem}

\begin{lemma}\label{l2}
In both cases, {\rm (PG)} and {\rm (EG)}, we have 
\rm{(\ha)}--\rm{(\hc)}.
\end{lemma}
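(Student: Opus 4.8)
The plan is to verify each of the three conditions (\ha), (\hb), (\hc) directly for both families of Fourier weights, treating the $\nu \geq 1$ part of $\alpha_{\nu,j}$ and the $\alpha_{0,j}$ part separately. Since in both (PG) and (EG) the weights $\alpha_{0,j}$ are prescribed only through the requirement that they satisfy (\hb), condition (\hb) holds by assumption and needs no further argument. So the real work concerns (\ha) and (\hc), which depend only on the $\nu \geq 1$ weights.

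For (\ha) I would first check monotonicity $\alpha_{\nu,j} \leq \alpha_{\nu+1,j}$ in $\nu$: in case (PG) this is clear since $r_j > 0$ makes $(\nu+1)^{r_j}$ increasing, and in case (EG) it follows from $r_j, b_j > 0$ making $2^{r_j \nu^{b_j}}$ increasing. Then I would verify the summability $\sum_{\nu \in \N} \alpha_{\nu,j}^{-1}\cdot \nu^{-1/2} < \infty$ for each fixed $j$. In (PG) this reduces to $\sum_\nu (\nu+1)^{-r_j}\nu^{-1/2}$, which converges exactly because $r_j > 1/2$ (this is the content already recorded in Example~\ref{r20}). In (EG) the factor $2^{-r_j\nu^{b_j}}$ decays faster than any polynomial since $r_j, b_j > 0$, so the series converges trivially (cf.\ Example~\ref{r22}).

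The main obstacle, and the only genuinely nontrivial step, is condition (\hc): I must produce an integer $j_0$ with $\sum_{\nu \in \N,\ j \geq j_0} \alpha_{\nu,j}^{-1} < \infty$, where the double sum runs over $\nu \geq 1$. The plan is to bound the inner sum $\sum_{\nu \geq 1}\alpha_{\nu,j}^{-1}$ by something proportional to $2^{-r_j}$ uniformly for large $j$, so that the assumption \eqref{g61} finishes the argument. In case (PG), for $j$ large enough that $r_j > 1$ (possible since $r_j > 1/2$ and, by Remark~\ref{r50}, $r_j \to \infty$ along the relevant indices under \eqref{g61}) one has $\sum_{\nu \geq 1}(\nu+1)^{-r_j} \leq 2^{-r_j}\sum_{\nu \geq 1}((\nu+1)/2)^{-r_j}$, and the tail can be controlled by a fixed convergent geometric-type factor. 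In case (EG) I would set $\beta := \inf_j b_j > 0$ and estimate $\sum_{\nu \geq 1} 2^{-r_j\nu^{b_j}} \leq 2^{-r_j}\sum_{\nu \geq 1} 2^{-r_j(\nu^{b_j}-1)}$; using $\nu^{b_j} - 1 \geq \nu^{\beta} - 1$ for $\nu \geq 1$ once $r_j$ is bounded below, the remaining series is summable in $\nu$ uniformly in large $j$, yielding the bound by a constant times $2^{-r_j}$.

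In both cases the upshot is an estimate of the form $\sum_{\nu \geq 1}\alpha_{\nu,j}^{-1}\leq C\cdot 2^{-r_j}$ valid for all $j \geq j_0$ with $C$ and $j_0$ depending only on the sequence $(r_j),(b_j)$; summing over $j \geq j_0$ and invoking \eqref{g61} gives (\hc). I expect the delicate point to be making the per-$j$ constant in these bounds uniform in $j$, since the exponents $r_j$ (and $b_j$) vary with $j$; the clean way around this is precisely to factor out $2^{-r_j}$ and show the residual series is uniformly bounded, which is where $\inf_j b_j > 0$ in (EG) and $r_j > 1/2$ together with \eqref{g61} in (PG) do their work.
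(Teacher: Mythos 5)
Your proposal is correct, and for (\ha), (\hb), and the (PG) part of (\hc) it coincides with the paper's own argument: (\ha) is delegated to the considerations of Examples~\ref{r20} and \ref{r22}, (\hb) holds by assumption, and for (PG) the paper likewise fixes $j_0$ with $r_j \geq 2$ and bounds $\sum_{\nu\geq 1}(\nu+1)^{-r_j} \leq 3\cdot 2^{-r_j}$ before invoking \eqref{g61}. Where you genuinely diverge is (\hc) in the (EG) case. You sum over $\nu$ for each fixed $j$, factoring out $2^{-r_j}$ and bounding the residual series $\sum_{\nu\geq 1} 2^{-r_j(\nu^{b_j}-1)}$ by $2\sum_{\nu\geq 1}2^{-\nu^{b}}$ (with $b:=\inf_j b_j>0$) once $r_j\geq 1$, using $\nu^{b_j}-1\geq\nu^{b}-1\geq 0$; this yields the uniform estimate $\sum_{\nu\geq 1}\alpha_{\nu,j}^{-1}\leq C\cdot 2^{-r_j}$ and reduces (\hc) directly to \eqref{g61}, exactly parallel to your (PG) argument. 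The paper instead sums over $j$ for each fixed $\nu$: it uses Remark~\ref{r50} to obtain $2^{r_j}\geq j^c$ for $j\geq j_0$, hence $\alpha_{\nu,j}^{-1}\leq j^{-c\nu^{b}}$, splits the range of $\nu$ at a threshold $\nu_0$ with $c\nu_0^{b}>1$, and controls the two ranges separately. Your version is more uniform across the two cases and produces the cleaner per-$j$ bound; the paper's version trades that for a detour through the polynomial minorant $j^c$. One small caution: in (PG), requiring only $r_j>1$ does not give a $j$-uniform constant for $\sum_{\nu\geq 1}\bigl((\nu+1)/2\bigr)^{-r_j}$ (it blows up as $r_j\downarrow 1$); you need a fixed margin such as $r_j\geq 2$, which is available because \eqref{g61} forces $r_j\to\infty$ --- a point you yourself anticipate in your closing paragraph, so this is a matter of writing the threshold explicitly rather than a gap.
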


\begin{proof}
Due to the considerations in Examples \ref{r20} and \ref{r22}, 
respectively, (\ha) is satisfied in both cases, and
(\hb) is satisfied by assumption.

For proving (\hc) we use
\[
\sum_{k \geq k_0} k^{-\tau}
\leq k_0^{-\tau} + \int_{k_0}^\infty k^{-\tau}\, d \tau
= k_0^{-\tau} \cdot \Big(1 + \frac{k_0}{\tau-1}\Big)
\]
for $\tau > 1$ and $k_0 \in \N$.

Consider the case (PG). Here we choose $j_0 \in \N$ such that 
$r_j \geq 2$ for every $j \geq j_0$. It follows that
\[
\sum_{\nu \in \N} \alpha_{\nu,j}^{-1} =
\sum_{\nu \in \N} (\nu+1)^{-r_j} 
\leq 2^{-r_j} \cdot 3
\]
for $j \geq j_0$, which yields (\hc), as claimed.

Consider the case (EG). 
Here we put $b := \inf_{j \in \N} b_j > 0$ and
choose $j_0 \geq 2$ and 
$c>0$ such that $2^{r_j} \geq j^c$
for every $j \geq j_0$, see Remark~\ref{r50}. It follows that
\[
\alpha_{\nu,j} = 
2^{r_j \cdot \nu^{b_j}} \geq 2^{r_j \cdot \nu^{b}} \geq j^{c \cdot
\nu^{b}}
\]
for $j \geq j_0$ and $\nu \in \N$. We choose $\nu_0 \in \N$ such
that $c \cdot \nu_0^b > 1$. For $\nu \geq \nu_0$ this yields
\[
\sum_{j \geq j_0} \alpha_{\nu,j}^{-1}
\leq j_0^{-c \cdot \nu^{b}} \cdot \Big(1 + \frac{j_0}{c\cdot
\nu_0^b-1} \Big),
\]
while
\[
\sum_{j \geq j_0} \alpha_{\nu,j}^{-1}
\leq \sum_{j \geq j_0} 2^{-r_j} < \infty
\]
for $1 \leq \nu < \nu_0$. Since
\[
\sum_{\nu \in \N} j_0^{-c \cdot \nu^{b}} < \infty,
\]
we obtain (\hc), as claimed. 
\end{proof}

According to Proposition~\ref{lem:domain2} and Lemma~\ref{l2}, 
the maximal domain $\X$ satisfies
\[
\X \subseteq \bigcap_{\nu\in\N} \ell^{2\nu}(\ba_\nu^{-1}) 
\subseteq \ell^2(\ba_1^{-1})
\]
in both cases, (PG) and (EG).

\begin{prop}\label{Lemma:Char_X_EG}
In the case {\rm (EG)} with exponential growth, i.e., if 
$b_j \geq 1$ for every $j \in \N$, we have
\[
\X = \bigcap_{\nu\in\N} \ell^{2\nu}(\ba_\nu^{-1}) = \ell^2(\ba_1^{-1}).
\]
\end{prop}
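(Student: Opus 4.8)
We already know from Proposition~\ref{lem:domain2} together with Lemma~\ref{l2} that in the case (EG) the chain of inclusions
\[
\X \subseteq \bigcap_{\nu\in\N} \ell^{2\nu}(\ba_\nu^{-1}) \subseteq \ell^2(\ba_1^{-1})
\]
holds, so the entire content of the proposition is the \emph{reverse} inclusion $\ell^2(\ba_1^{-1}) \subseteq \X$. Concretely, fixing $\bx \in \ell^2(\ba_1^{-1})$, I must show $\sum_{j\in\N} k_j^*(x_j,x_j) < \infty$, where
\[
k_j^*(x_j,x_j) = \sum_{\nu\in\N} \alpha_{\nu,j}^{-1}\cdot |h_\nu(x_j)|^2
= \sum_{\nu\in\N} 2^{-r_j\cdot \nu^{b_j}}\cdot |h_\nu(x_j)|^2.
\]
The plan is to bound the inner $\nu$-sum by a constant multiple of its first term $\alpha_{1,j}^{-1}\cdot x_j^2 = 2^{-r_j}\cdot x_j^2$, uniformly in $j$, so that summing over $j$ reduces to the assumption $\sum_j 2^{-r_j} x_j^2 < \infty$ defining $\ell^2(\ba_1^{-1})$.

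\textbf{Key steps.} First I would exploit that $b_j \geq 1$: since the exponents $r_j\cdot\nu^{b_j}$ grow at least linearly in $\nu$, I have $\alpha_{\nu,j}^{-1} = 2^{-r_j\cdot\nu^{b_j}} \leq 2^{-r_j\cdot\nu} = (2^{-r_j})^\nu$, i.e.\ the weights decay geometrically with ratio $q_j := 2^{-r_j} \in (0,1)$. Second, I would control $|h_\nu(x_j)|^2$ by a quantity growing only sub-geometrically in $\nu$; the natural tool is Lemma~\ref{Lemma_3.1b}, which gives $|h_\nu(x_j)|^2 \leq 4^\nu\max(1,|x_j|^{2\nu})$, but the factor $4^\nu$ would force a restriction $r_j > 2$, so instead I would use the sharper pointwise bound \eqref{eq10}, namely $|h_\nu(x_j)|^2 \leq C(x_j)\cdot\nu^{-1/2} \leq C(x_j)$ for a constant $C(x_j)$ depending only on $x_j$. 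Combining these,
\[
k_j^*(x_j,x_j) \leq C(x_j)\cdot\sum_{\nu\in\N} q_j^\nu
= C(x_j)\cdot\frac{q_j}{1-q_j},
\]
which is summable only if I additionally track the $x_j$-dependence, so the real work is to make the bound of the form $(\text{const})\cdot 2^{-r_j}\cdot x_j^2$ rather than $C(x_j)\cdot 2^{-r_j}$.

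\textbf{The main obstacle.} The delicate point is precisely this interplay between the $j$-dependence through $x_j$ and the $j$-dependence through $r_j$: a crude bound such as $C(x_j)\cdot 2^{-r_j}$ is useless, because the $x_j$ need not be bounded for $\bx\in\ell^2(\ba_1^{-1})$, and $\sum_j 2^{-r_j}$ converges but $\sum_j C(x_j)\cdot 2^{-r_j}$ may not. To resolve this I would split the inner sum, isolating the $\nu=1$ term $2^{-r_j}x_j^2$ exactly and bounding the tail $\sum_{\nu\geq 2} 2^{-r_j\cdot\nu^{b_j}}|h_\nu(x_j)|^2$ by something comparable to $2^{-r_j}x_j^2$ as well. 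Using $b_j\geq 1$ one has $2^{-r_j\cdot\nu^{b_j}} \leq 2^{-r_j}\cdot 2^{-r_j(\nu-1)}$ for $\nu\geq 1$, and inserting Cram\'er's inequality \eqref{eq11}, $|h_\nu(x_j)| \leq \exp(x_j^2/4)$, gives a tail bounded by $2^{-r_j}\cdot\exp(x_j^2/2)\cdot\sum_{\nu\geq 2} 2^{-r_j(\nu-1)}$, and the geometric remainder contributes only a factor $\frac{2^{-r_j}}{1-2^{-r_j}}$, which is uniformly bounded since $\inf_j r_j > 0$ by Remark~\ref{r50}. The surviving issue is to absorb $\exp(x_j^2/2)$ against $x_j^2$: since $\bx\in\ell^2(\ba_1^{-1})$ forces $2^{-r_j}x_j^2\to 0$ and hence $x_j\to 0$ (using $\inf_j r_j>0$), one has $\exp(x_j^2/2)\leq 2$ for all but finitely many $j$, so the tail is dominated by $4\cdot 2^{-r_j}x_j^2$ eventually, and summing over $j$ yields finiteness from $\sum_j 2^{-r_j}x_j^2 < \infty$. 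This completes $\ell^2(\ba_1^{-1})\subseteq\X$ and, with the already established reverse inclusions, proves the proposition.
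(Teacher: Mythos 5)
Your overall plan (reduce to $\ell^2(\ba_1^{-1})\subseteq\X$, use $b_j\ge 1$ to get geometric decay $\alpha_{\nu,j}^{-1}\le (2^{-r_j})^\nu$) matches the paper, but the execution has a genuine gap at the decisive step. You claim that $\bx\in\ell^2(\ba_1^{-1})$ forces $2^{-r_j}x_j^2\to 0$ \emph{and hence} $x_j\to 0$, ``using $\inf_j r_j>0$''. This implication is false: to deduce $x_j\to 0$ from $2^{-r_j}x_j^2\to 0$ you would need $2^{-r_j}$ bounded \emph{below} away from zero, i.e.\ $\sup_j r_j<\infty$, whereas the standing assumption \eqref{g61}, $\sum_j 2^{-r_j}<\infty$, forces $r_j\to\infty$ and hence $2^{-r_j}\to 0$. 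Elements of $\ell^2(\ba_1^{-1})$ can be unbounded (indeed the proposition identifies this space with $\X$, which strictly contains $\ell^\infty$ by Proposition~\ref{lem:domain2}). For such $\bx$ the factor $\exp(x_j^2/2)$ coming from Cram\'er's inequality is uncontrolled, and your tail bound $2^{-2r_j}\exp(x_j^2/2)$ need not be summable: e.g.\ $x_j^2=6r_j\ln 2$ gives $2^{-r_j}x_j^2$ summable for suitable $r_j$ while $2^{-2r_j}\exp(x_j^2/2)=2^{r_j}\to\infty$. So the argument breaks down exactly on the unbounded sequences that make the statement nontrivial.

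The fix is essentially the paper's proof, and it also explains why your reason for discarding Lemma~\ref{Lemma_3.1b} was mistaken. Setting $z_j:=\max(1,|x_j|)$, that lemma gives $|h_\nu(x_j)|^2\le C^\nu z_j^{2\nu}$, so
\[
\sum_{j\ge j_1}k_j^*(x_j,x_j)\le\sum_{j\ge j_1}\sum_{\nu\in\N}\bigl(C\,2^{-r_j}z_j^2\bigr)^\nu\le\sum_{\nu\in\N}\Bigl(C\sum_{j\ge j_1}2^{-r_j}z_j^2\Bigr)^\nu .
\]
The point is that the factor $C^\nu$ is not absorbed termwise by requiring $C\,2^{-r_j}<1$ for each $j$ (which would indeed force $r_j>2$); instead one sums over $j$ \emph{inside} the $\nu$-th power and uses $\sum_j a_j^\nu\le(\sum_j a_j)^\nu$, then chooses $j_1$ so large that $C\sum_{j\ge j_1}2^{-r_j}z_j^2<1$ — possible because $\sum_j 2^{-r_j}z_j^2\le\sum_j 2^{-r_j}+\sum_j 2^{-r_j}x_j^2<\infty$ by \eqref{g61} and $\bx\in\ell^2(\ba_1^{-1})$. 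The finitely many terms $j<j_1$ are each finite by (\ha). This exploits summability over $j$ rather than decay of the individual $x_j$, which is what your argument is missing.
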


\begin{proof}
It suffices to show that 
$$\ell^2(\ba_1^{-1}) \subseteq \X.$$ 
Since $\X$ becomes larger if we increase the $b_j$, $j\in \N$, and 
$\ell^2(\ba_1^{-1})$ is independent of the choice of the $b_j$, $j\in\N$, 
we actually only need to consider the case where 
$b_j=1$ for every $j \in \N$. 
Let $\bx \in \ell^2(\ba_1^{-1})$. To prove $\bx \in \X$ it suffices to 
prove
\begin{equation}\label{est_sum_j_1}
\sum_{j\ge j_1} k^*_j(x_j,x_j) <\infty 
\end{equation}
for some $j_1\in\N$.
For all $j\in \N$ we put $z_j:= \max( 1, |x_j|)$. Due to 
Lemma~\ref{Lemma_3.1b} there exists a $C>0$ such that
$
|h_\nu(x_j)|^{2} \le C^\nu |z_j|^{2\nu}
$
for all $\nu\in\N$. 
Therefore we obtain, for all $j_1 \in \N$,
\begin{align*}
\sum_{j\ge j_1} k^*_j(x_j,x_j) 
&= \sum_{j\ge j_1} \sum_{\nu \in \N} \alpha^{-1}_{\nu,j} 
\cdot |h_\nu (x_j)|^2 \\
&\le  \sum_{j\ge j_1} 
\sum_{\nu \in \N} 2^{-r_j\,\nu} C^\nu \cdot |z_j|^{2\,\nu} \\
&\le \sum_{\nu\in\N} \Bigg( C \sum_{j\ge j_1} 2^{-r_j}
\cdot |z_j|^2 \Bigg)^\nu.
\end{align*}
Since $\bx \in \ell^2(\ba_1^{-1})$ 
and \eqref{g61} is satisfied, 
we have 
$
\sum_{j\in\N} 2^{-r_j} |z_j|^2 < \infty. 
$
Hence we may choose $j_1$ to be large enough to yield
$$
\sum_{j\ge j_1} 2^{-r_j} |z_j|^2 < C^{-1}.
$$
This choice obviously ensures \eqref{est_sum_j_1}.
\end{proof}

\begin{prop}\label{p17}
Consider the case {\rm (PG)} or the case {\rm (EG)} with sub-exponential 
growth, i.e., $\limsup_{j \to \infty} b_j < 1$. 
If $\hat{r} > 1/\ln(2)$ then we have
\[
\bigcap_{\nu\in\N} \ell^{2\nu}(\ba_\nu^{-1})
\subsetneq 
\bigcap_{\nu=1}^{\nu_0} \ell^{2\nu}(\ba_\nu^{-1})
\]
for every $\nu_0 \in \N$.
\end{prop}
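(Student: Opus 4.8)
The plan is to establish the strict inclusion by exhibiting, for each fixed $\nu_0 \in \N$, a sequence $\bx \in \bigcap_{\nu=1}^{\nu_0} \ell^{2\nu}(\ba_\nu^{-1})$ that fails to lie in $\ell^{2(\nu_0+1)}(\ba_{\nu_0+1}^{-1})$, and hence in the full intersection $\bigcap_{\nu\in\N} \ell^{2\nu}(\ba_\nu^{-1})$. The witness will be sparsely supported. First I would record that $\hat{r} > 1/\ln(2) > 0$ forces $r_j \to \infty$ (for $j$ large, $r_j \ge c\,\ln(j)$ with some $c>0$ by Remark~\ref{r50}); consequently one can extract a strictly increasing sequence $(j_k)_{k\in\N}$ along which $r_{j_k}$ grows as fast as desired, say $r_{j_k} \ge M k$ for a constant $M$ to be fixed in terms of $\nu_0$. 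In the case (EG) with sub-exponential growth I would additionally fix $b' \in {]\limsup_j b_j, 1[}$ and choose the $j_k$ large enough that $b_{j_k} \le b' < 1$.

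The key structural observation is that, writing $g_j(\nu) := \log_2 \alpha_{\nu,j}$, one has $g_j(\nu) = r_j \log_2(\nu+1)$ in case (PG) and $g_j(\nu) = r_j \,\nu^{b_j}$ in case (EG); in both cases $\nu \mapsto g_j(\nu)$ is concave on $[0,\infty[$ with $g_j(0) = 0$. I would then define $\bx$ to be supported on $\{j_k : k \in \N\}$ with $|x_{j_k}| := 2^{(g_{j_k}(\nu_0) - k)/(2\nu_0)}$, i.e.\ calibrated so that the $j_k$-th summand of the defining series of $\ell^{2\nu_0}(\ba_{\nu_0}^{-1})$ equals $2^{-k}$. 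The point of this calibration is that the $j_k$-th summand for a general exponent $\nu$ then factorizes neatly as
\[
\alpha_{\nu,j_k}^{-1} \cdot x_{j_k}^{2\nu}
= 2^{-\psi_{j_k}(\nu) - (\nu/\nu_0)\,k},
\qquad
\psi_j(\nu) := g_j(\nu) - \tfrac{\nu}{\nu_0}\, g_j(\nu_0).
\]

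It then remains to read off the two required properties from concavity of $g_j$. Since $g_j(0)=0$ and $g_j$ is concave, the ratio $g_j(\nu)/\nu$ is non-increasing, so $\psi_{j_k}(\nu) \ge 0$ for $1 \le \nu \le \nu_0$; together with $\nu \ge 1$ this bounds the summands above by $2^{-k/\nu_0}$, whence $\bx \in \ell^{2\nu}(\ba_\nu^{-1})$ for every $1 \le \nu \le \nu_0$. For $\nu = \nu_0+1$ strict concavity yields $\psi_{j_k}(\nu_0+1) \le -r_{j_k}\delta$ with a constant $\delta>0$ depending only on $\nu_0$ (and, in case (EG), on $b'$); choosing $M$ large enough that $r_{j_k} \ge Mk$ drives $r_{j_k}\delta - \tfrac{\nu_0+1}{\nu_0}k \to \infty$, so the summands tend to infinity and $\bx \notin \ell^{2(\nu_0+1)}(\ba_{\nu_0+1}^{-1})$. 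I expect the main obstacle to be the EG case: there $b_j$ varies with $j$, and one must check that the strict-concavity gap at $\nu_0+1$ does not degenerate along the support. I would handle this by factoring $-\psi_j(\nu_0+1) = r_j \,\nu_0^{b_j}\bigl(s - s^{b_j}\bigr)$ with $s := (\nu_0+1)/\nu_0 > 1$, and using that $b \mapsto s - s^{b}$ is decreasing to obtain the uniform bound $s - s^{b_j} \ge s - s^{b'} =: \delta > 0$ for all $b_{j_k} \le b'$, while $\nu_0^{b_{j_k}} \ge 1$. The analogous (easier) bound in case (PG) is immediate, since $g_j(\nu)=r_j\log_2(\nu+1)$ makes $\delta$ independent of $j$.
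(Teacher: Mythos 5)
Your proof is correct, but it takes a genuinely different route from the paper's. The paper uses a fully supported witness, defined by $x_j^{2\nu_0} := 2^{r_j}\cdot j^{-a}$ with $1<a<1+\eps$, verifies membership in $\bigcap_{\nu=1}^{\nu_0}\ell^{2\nu}(\ba_\nu^{-1})$ via the crude bounds $|x_j|\ge 1$ and $\alpha_{\nu,j}\ge 2^{r_j}$, and detects non-membership only at the exponents $\nu=k\nu_0$ for $k$ large, where each summand is $\ge 1$; the hypothesis $\hat r>1/\ln(2)$ is used essentially, to get $2^{r_j}\ge j^{1+\eps}$ and hence convergence of the full-support series. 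Your sparse witness, calibrated so that the $\nu_0$-th series has $j_k$-summand $2^{-k}$, replaces these computations by the single observation that $\nu\mapsto\log_2\alpha_{\nu,j}$ is concave and vanishes at $0$, which treats (PG) and (EG) uniformly and yields two improvements: the exclusion already occurs at $\nu=\nu_0+1$ (so the chain $\nu_0\mapsto\bigcap_{\nu=1}^{\nu_0}\ell^{2\nu}(\ba_\nu^{-1})$ is strictly decreasing at every step, not just cofinally), and the hypothesis $\hat r>1/\ln(2)$ is not needed at all --- you only use $r_j\to\infty$ to extract the subsequence $(j_k)$, and that already follows from \eqref{g61}, which is part of the standing assumptions (PG)/(EG). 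Two cosmetic points: the identity $g_j(0)=0$ refers to the extension of the formulas $r_j\log_2(\nu+1)$ and $r_j\nu^{b_j}$ to $\nu=0$, not to $\log_2\alpha_{0,j}$ (the weights $\alpha_{0,j}$ are arbitrary subject to (\hb)), so you should phrase it that way; and the constant $M$ should be fixed only after $\delta$ is, i.e.\ in terms of $\nu_0$ and, in case (EG), of $b'$ --- but both are presentational, not gaps.
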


\begin{proof}
Since $\hat{r} > 1/\ln(2)$, we may
choose $0<\eps<1$ and $j_0\in\N$ such that
\[
2^{r_j} \geq j^{1+\eps}
\]
for $j\ge j_0$. In the case (EG) with sub-exponential growth we
may furthermore assume
\[
b_j \leq 1 - \eps
\]
for $j \geq j_0$.
Moreover, we choose $1 < a < 1+\eps$, and we put 
$\delta := 1-a/(1+\eps) > 0$. Let $\nu_0 \in \N$. For $\bx \in \R^\N$ with 
\[
x_j^{2 \nu_0} = 2^{r_j} \cdot j^{-a}
\]
we show that
\begin{equation}\label{nn1}
\bx \in \bigcap_{\nu=1}^{\nu_0} \ell^{2\nu}(\ba_\nu^{-1}),
\end{equation}
but
\begin{equation}\label{nn2}
\bx \not \in \ell^{2k\nu_0}(\ba_{k\nu_0}^{-1})
\end{equation}
for $k \in \N$ sufficiently large.

Let $j \geq j_0$ and $1 \leq \nu \leq \nu_0$.
Since $|x_j| \geq 1$ and $\alpha_{\nu,j} \geq 2^{r_j}$, we obtain
\[
\sum_{j \geq j_0} \alpha_{\nu,j}^{-1} \cdot x_j^{2\nu}
\leq
\sum_{j \geq j_0} 2^{-r_j} \cdot x_j^{2\nu_0} < \infty.
\]
Consequently, we have \eqref{nn1}.

Let $j \geq j_0$ and $k \in \N$.
Note that 
\[
x_j^{2 k \nu_0}
=
2^{r_j k} \cdot j^{-ak} 
\geq 
2^{r_j k \delta}.
\]

Consider the case (EG) with sub-exponential growth. Here we have 
\[
\alpha_{k \nu_0,j}^{-1} \geq 2^{-r_j \cdot (k\nu_0)^{1-\eps}}.
\]
For every $k \in \N$ with
\[
k \delta \geq (k \nu_0)^{1-\eps}
\]
this implies
\[
\alpha_{k \nu_0,j}^{-1} \cdot x_j^{2 k \nu_0} \geq 
2^{r_j \cdot \left(k\delta-(k\nu_0)^{1-\eps}\right)}
\geq 1,
\]
which completes the proof of \eqref{nn2} in this case.

Consider the case (PG). Here we have
\[
\alpha_{k \nu_0,j}^{-1} = (1+k\nu_0)^{-r_j}
\]
by definition. For every $k \in \N$ with
\[
2^{k\delta} \geq 1+k\nu_0
\]
this implies
\[
\alpha_{k \nu_0,j}^{-1} \cdot x_j^{2 k \nu_0} \geq 
\left( 2^{k\delta} \cdot \left(1+k\nu_0\right)^{-1}\right)^{r_j}
\geq 1,
\]
which completes the proof of \eqref{nn2} in the case (PG).
\end{proof}

In the cases (PG) and (EG) with sub-exponential growth 
we do not know if $\X = \bigcap_{\nu\in\N} \ell^{2\nu}(\ba_\nu^{-1})$
or if there is another
simple characterization of this domain.

\section{Gaussian Spaces}
\label{SEC:Gauss}

Now we study Hilbert spaces of Gaussian 
kernels, which we shortly address as Gaussian spaces. 
As before, we take $\K = \R$.

\subsection{Functions of a Single Variable}
\label{SUBSEC:Gauss_1}

At first we consider spaces of univariate functions.

\begin{defi}
Let $\sigma>0$. 
The reproducing kernel 
$\ell_\sigma$ given by 
\[
\phantom{\qquad\quad x,y \in \R,}
\ell_\sigma (x,y) :=  
\exp \left( -\sigma^2 \cdot (x-y)^2 \right),
\qquad\quad x,y \in \R,
\]
is called a \emph{Gaussian kernel} 
and the Hilbert space $H(\ell_\sigma)$ is called a \emph{Gaussian space} of functions of a single real variable with \emph{shape parameter} $\sigma$.
\end{defi}

We collect some facts about the spaces $H(\ell_\sigma)$ from 
\citet{SHS2006},
see also \citet[Sec.~4.4]{SC08}.
Each function $f\in H(\ell_\sigma)$ is the real part of 
an entire function $g$ restricted to the real line, where $g$ belongs 
to the complex reproducing kernel Hilbert space with kernel $\ell_\sigma$ 
extended to $\C$ in the obvious way. In particular, if $f$ is constant 
on any open non-empty interval, then $f$ is the zero function.
Moreover, the functions
\[
\phantom{\qquad\quad x \in \R,}
e_{\nu,\sigma}(x) := 
\frac{1}{\sqrt{\nu!}} \, \bigl(\sqrt{2} \sigma\bigr)^\nu x^\nu \cdot
\ell_\sigma(x,0),
\qquad\quad x \in \R,
\]
with $\nu \in \N_0$ form an orthonormal basis of $H(\ell_\sigma)$. 
It is easily verified that
\begin{equation}\label{g55}
\ell_\sigma(x,y) = \sum_{\nu \in \N_0} e_{\nu,\sigma} (x) \cdot
e_{\nu,\sigma} (y)
\end{equation}
with absolute convergence for all $x,y \in \R$. 
If $0<\sigma < \sigma'$, 
then $H(\ell_\sigma) \subsetneq H(\ell_{\sigma'})$ 
with a non-compact continuous embedding of norm 
$\sqrt{\sigma'/\sigma}$.
Observe that the identity \eqref{g55} for the Gaussian kernel
is reminiscent of
definition \eqref{g54} of the Hermite kernel.
Based on Mehler's formula instead 
of \eqref{g55}, a characterization of $H(\ell_\sigma)$ as an 
$L^2$-subspace in the sense of Proposition~\ref{l24} will be 
given in Section~\ref{s5.1}.

\subsection{Functions of Infinitely Many Variables}\label{s4.2}
\label{SUBSEC:Gauss_inf}

Gaussian spaces of functions of infinitely many real variables are 
based on a sequence $\bs := (\sigma_j)_{j \in \N}$ of shape parameters 
$\sigma_j > 0$;
for the corresponding Gaussian kernels and basis functions in
the univariate case we use the 
short hands $\ell_j := \ell_{\sigma_j}$ and $e_{\nu,j} := 
e_{\nu,\sigma_j}$. 

We proceed similar to Section \ref{s3.2}, but we will
encounter some important differences along the way. 
First of all, $1 \not\in H(\ell_j)$.
In each of the spaces $H(\ell_j)$ we therefore choose the unit vector
\[
v_j := e_{0,j},
\]
i.e., $v_j(x) = \exp(- \sigma_j^2 \cdot x^2)$, and we
study the corresponding incomplete tensor product
\[
G^{(\bv)} := \bigotimes_{j \in \N} \left( H(\ell_j)
\right)^{(v_j)}.
\]

We will employ Theorem \ref{ta1} again to identify the space $G^{(\bv)}$ 
with a reproducing kernel Hilbert space of functions on a suitable
subset of $\R^\N$. In the present case
we even have the convergence of the partial products 
$\prod_{j=1}^J \ell_j(x_j,y_j)$ as $J \to \infty$
for all $\bx,\by \in \R^\N$. Even more, the products $\prod_{j \in
\N} \ell_j(x_j,y_j)$ converge
in the sense of Section \ref{as1}, since $\ell_j(x_j,y_j) \in
\left]0,1\right]$. Hence we may study the tensor product kernel
$L$ given by
\begin{equation}\label{def:kernel_L}
\phantom{\qquad\quad \bx,\by \in \R^\N.}
L(\bx,\by) 
:= 
\prod_{j=1}^\infty \ell_j(x_j,y_j),
\qquad\quad \bx,\by \in \R^\N.
\end{equation}
One might expect that $G^{(\bv)}$ may be identified
with $H(L)$ in a canonical way, analogously to the result
for Hermite spaces.

For any reproducing kernel $M$ on $\R^\N \times \R^\N$ and any 
non-empty set $\Y \subseteq \R^\N$ we use $M_\Y$ to denote the 
restriction $M|_{\Y \times \Y}$.

The elementary tensors
\[
e_\bn := \bigotimes_{j\in\N} e_{\nu_j,j}
\]
with $\bn \in \bN$ form an orthonormal basis of $G^{(\bv)}$,
see Remark~\ref{onb}.
We will identify the tensor product space $G^{(\bv)}$ with 
a reproducing kernel Hilbert space $H(L_\Y)$
for a suitable non-empty domain $\Y \subseteq \R^{\N}$ via the 
linear mapping 
\[
\Phi \colon G^{(\bv)} \to \R^{\Y}, 
\]
given by 
\begin{equation}\label{def:Phi}
\Phi f(\bx) =  
\sum_{\bn\in \bN} \scp{f}{e_\bn}_{G^{(\bv)}} \cdot
\prod_{j=1}^\infty e_{\nu_j,j}(x_j)
\end{equation}
for $\bx\in \Y$, cf.\ Theorem \ref{ta1}. More precisely, $\Phi$ 
should, of course, be well-defined and should induce an isometric 
isomorphism between $G^{(\bv)}$ and $H(L_\Y)$.
It turns out, in particular, that we cannot achieve the latter
goal with $\Y = \R^{\N}$; instead, as shown below, $\Y = \X$ with
\begin{equation}\label{eq:dom_gauss}
\X := \ell^2(\bs^2)
\end{equation}
is the proper choice. By definition,
$\X$ consists of all sequences $\bx \in \R^{\N}$ such that
\[
\sum_{j\in\N} \sigma^2_j \cdot x_j^2 <\infty.
\]

Obviously,
\[
L(\bx,\by) =
\begin{cases}
\exp \bigl( -\sum_{j\in\N} \sigma_{j}^2 \cdot (x_{j}- y_j)^2\bigr),
& \text{if $\bx-\by \in \X$,}\\
0, & \text{otherwise,}
\end{cases}
\]
for all $\bx,\by \in \R^{\N}$. 

\begin{lemma}\label{l11}
For $\bx,\by \in \R^\N$ we have
\[
\sum_{j \in \N} | \ell_j(x_j,y_j) -1 | < \infty
\quad \Leftrightarrow \quad 
\bx - \by \in \X.
\]
In particular,
\[
\sum_{j \in \N} | v_j(x_j) -1 | < \infty
\quad \Leftrightarrow \quad 
\bx \in \X.
\]
\end{lemma}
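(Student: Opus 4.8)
The plan is to reduce the two-sequence statement to an elementary estimate on the single factor $\ell_j(x_j,y_j)-1$. Writing $t_j := \sigma_j^2 \cdot (x_j-y_j)^2 \geq 0$, we have $\ell_j(x_j,y_j) = e^{-t_j} \in {]0,1]}$, so $|\ell_j(x_j,y_j)-1| = 1 - e^{-t_j}$. First I would invoke the standard two-sided bound $\tfrac{1}{2} t \leq 1 - e^{-t} \leq t$, valid for all $0 \leq t \leq 1$ (the same elementary inequality already used in the proof of Lemma \ref{l51} via $\ln(1+t)$), to pin the summand between $\tfrac12 t_j$ and $t_j$ once $t_j \leq 1$. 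Since convergence of a series of nonnegative terms is unaffected by finitely many large terms, the tail where $t_j > 1$ is irrelevant to both sides: if $\sum_j |\ell_j(x_j,y_j)-1| < \infty$ then $t_j \to 0$, so $t_j \leq 1$ eventually, and on that tail $t_j \leq 2|\ell_j(x_j,y_j)-1|$ gives $\sum_j t_j < \infty$; conversely $\sum_j t_j < \infty$ forces $t_j \to 0$, and on the tail $|\ell_j(x_j,y_j)-1| \leq t_j$ gives the summability of the left-hand side.

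The key point is then simply that $\sum_{j \in \N} t_j = \sum_{j \in \N} \sigma_j^2 (x_j - y_j)^2 < \infty$ is by definition the statement $\bx - \by \in \X = \ell^2(\bs^2)$, recalling \eqref{eq:dom_gauss}. This establishes the first equivalence. The second, in-particular claim follows by specializing $\by$ to the zero sequence: since $v_j = e_{0,j}$ satisfies $v_j(x_j) = \exp(-\sigma_j^2 x_j^2) = \ell_j(x_j, 0)$, the chain $\sum_j |v_j(x_j)-1| < \infty \Leftrightarrow \bx - \boldsymbol{0} \in \X \Leftrightarrow \bx \in \X$ is immediate from the first part.

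I do not anticipate a genuine obstacle here; the only care needed is the bookkeeping that the elementary bound $\tfrac12 t \leq 1 - e^{-t} \leq t$ holds only for $t \leq 1$, so both directions must first extract that the relevant summand tends to zero before applying the bound on a tail. This is the mildest of technical points and is handled exactly as the analogous $\ln(1+t)$ argument in Lemma \ref{l51}. The whole proof is short and reduces entirely to this single scalar inequality together with the definition of $\X$.
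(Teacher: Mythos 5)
Your proof is correct. It does, however, take a more elementary route than the paper's. The paper's proof is a two-liner that quotes the general criterion for infinite products recalled in Section~\ref{as1}: since $\ell_j(x_j,y_j)\in{]0,1]}$, the product $\prod_{j\in\N}\ell_j(x_j,y_j)$ always converges, and a convergent product of nonzero factors is nonzero if and only if $\sum_{j\in\N}|\ell_j(x_j,y_j)-1|<\infty$; combined with the explicit formula for $L(\bx,\by)$ stated just before the lemma, this gives the equivalence with $\bx-\by\in\X$ immediately. You instead prove the needed scalar estimate by hand, sandwiching $1-e^{-t}$ between $t/2$ and $t$ for $0\le t\le 1$ and handling the tail where $t_j>1$ by first extracting $t_j\to 0$ --- exactly parallel to the $\ln(1+t)$ argument in the proof of Lemma~\ref{l51}. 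Both arguments are sound and reduce to the same summability of $t_j=\sigma_j^2(x_j-y_j)^2$; the paper's version buys brevity by reusing the von Neumann product machinery already set up in the appendix, while yours is self-contained and does not depend on that framework. Your treatment of the second claim, via $v_j(x)=\ell_j(x,0)$ and $\by:=0$, coincides with the paper's.
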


\begin{proof}
Since $\prod_{j \in \N} \ell_j(x_j,y_j)$ converges and
$\ell_j(x_j,y_j) \neq 0$ for every $j \in \N$, the convergence of
$\sum_{j \in \N} | \ell_j(x_j,y_j) -1 |$ is equivalent to
$L(\bx,\by) \neq 0$, i.e., to $\bx-\by \in \X$.
The second statement is just a special case of the first
statement, namely with $\by := 0$. 
\end{proof}

Lemma \ref{l11} and Theorem \ref{ta1} immediately imply that $\Phi$
induces an isometric isomorphism between the incomplete tensor
product $G^{(\bv)}$ and the reproducing kernel Hilbert space
$H(L_\X)$. In this sense $H(L_\X)$
is the
incomplete tensor product of the Gaussian
spaces $H(\ell_j)$ based on the unit vectors $v_j = e_{0,j}$ for 
$j \in \N$. 

\begin{defi}
Let  $\bs := (\sigma_j)_{j \in \N}$ be a a sequence of shape parameters $\sigma_j > 0$.
The reproducing kernel  $L_\X$ defined by \eqref{def:kernel_L} on the domain 
$\X$ given by \eqref{eq:dom_gauss}
is called a \emph{Gaussian kernel} and the Hilbert space $H(L_\X)$ is called a \emph{Gaussian space} of functions of  infinitely many real variables.
\end{defi}

Larger domains $\Y \supsetneq \X$ in the definition
of $\Phi$ in \eqref{def:Phi} are discussed in the sequel.
The definition of $\tilde{L}$ in the following lemma is the 
counterpart to the representation \eqref{g55} of $\ell_j$ in the 
univariate case. Moreover, the lemma ensures that $\Phi$ is well-defined
for any choice of $\Y$.

\begin{lemma}\label{l48}
Let $\bx,\by \in \R^{\N}$. 
The series
\[
\tilde{L}(\bx,\by) := 
\sum_{\bn\in\bN} 
\prod_{j=1}^\infty e_{\nu_j,j}(x_j) \cdot e_{\nu_j,j}(y_j)
\]
is absolutely convergent, and 
\[
\tilde{L}(\bx,\by) =
\begin{cases}
L(\bx,\by), & \text{if $\bx,\by \in \X$,}\\
0, & \text{otherwise.}
\end{cases}
\]
\end{lemma}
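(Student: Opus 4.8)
The plan is to split the argument according to whether both $\bx$ and $\by$ lie in $\X$, since the obstruction to convergence sits entirely in the ``zeroth'' factors $e_{0,j}=v_j$, which---unlike in the Hermite case, where $h_0=1$---do not equal $1$. Accordingly, for $\bn\in\bN$ with finite support $S:=\{j:\nu_j\neq 0\}$ and any $J\geq\max S$ the partial product factorizes as
\[
\prod_{j=1}^J e_{\nu_j,j}(x_j)\,e_{\nu_j,j}(y_j)
=\Bigl(\prod_{j=1}^J v_j(x_j)\,v_j(y_j)\Bigr)\cdot
\prod_{j\in S}\tilde e_{\nu_j,j}(x_j)\,\tilde e_{\nu_j,j}(y_j),
\]
where $\tilde e_{\nu,j}(x):=e_{\nu,j}(x)/v_j(x)=(\sqrt 2\,\sigma_j)^\nu x^\nu/\sqrt{\nu!}$ is the polynomial part and $\tilde e_{0,j}\equiv 1$. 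Letting $J\to\infty$, the first factor tends to $P:=\prod_{j=1}^\infty v_j(x_j)v_j(y_j)=\exp(-\sum_{j\in\N}\sigma_j^2(x_j^2+y_j^2))$, so each term of $\tilde L(\bx,\by)$ equals $P\cdot\prod_{j\in S}\tilde e_{\nu_j,j}(x_j)\tilde e_{\nu_j,j}(y_j)$.

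Suppose first that $\bx$ or $\by$ fails to lie in $\X$, so that $\sum_{j\in\N}\sigma_j^2(x_j^2+y_j^2)=\infty$ and hence $P=0$; since $S$ is finite, deleting the factors indexed by $S$ does not change this, and the displayed limit is $0$ for every $\bn$. Thus every term of $\tilde L(\bx,\by)$ vanishes, which gives absolute convergence and $\tilde L(\bx,\by)=0$ simultaneously.

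Now suppose $\bx,\by\in\X$, so that $P>0$. It then remains to evaluate $\sum_{\bn\in\bN}\prod_{j=1}^\infty\tilde e_{\nu_j,j}(x_j)\tilde e_{\nu_j,j}(y_j)$, a sum of genuinely finite products. I would apply \citet[Lem.~B.1]{GHHRW2020} with $\beta_{\nu,j}:=\tilde e_{\nu,j}(x_j)\tilde e_{\nu,j}(y_j)$ (note $\beta_{0,j}=1$), exactly as in the proof of Lemma~\ref{l77}. Its convergence criterion reduces to $\sum_{j\in\N}\sum_{\nu\geq 1}|\tilde e_{\nu,j}(x_j)\tilde e_{\nu,j}(y_j)|=\sum_{j\in\N}\bigl(\exp(2\sigma_j^2|x_j y_j|)-1\bigr)<\infty$, which holds because $\sum_{j\in\N}\sigma_j^2|x_j y_j|\leq(\sum_j\sigma_j^2 x_j^2)^{1/2}(\sum_j\sigma_j^2 y_j^2)^{1/2}<\infty$ by Cauchy--Schwarz. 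This secures the absolute convergence of $\tilde L(\bx,\by)$ and yields
\[
\sum_{\bn\in\bN}\prod_{j=1}^\infty\tilde e_{\nu_j,j}(x_j)\tilde e_{\nu_j,j}(y_j)
=\prod_{j=1}^\infty\sum_{\nu\in\N_0}\tilde e_{\nu,j}(x_j)\tilde e_{\nu,j}(y_j)
=\prod_{j=1}^\infty\exp(2\sigma_j^2 x_j y_j),
\]
the inner sum being the power series $\sum_\nu(2\sigma_j^2 x_j y_j)^\nu/\nu!$, i.e.\ the univariate identity \eqref{g55} divided by $v_j(x_j)v_j(y_j)$. Multiplying by $P$ and combining exponents gives $\tilde L(\bx,\by)=\exp(-\sum_{j\in\N}\sigma_j^2(x_j-y_j)^2)=L(\bx,\by)$, using $\bx-\by\in\X$.

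The main obstacle is conceptual rather than computational: because $e_{0,j}=v_j\neq 1$, the defining product over all $j$ in $\tilde L$ is a true infinite product whose convergence to a positive number versus to $0$ is precisely the $\X$-dichotomy. The bookkeeping that separates this infinite $v_j$-tail from the finite polynomial part, together with the verification that the interchange hypothesis of \citet[Lem.~B.1]{GHHRW2020} holds via Cauchy--Schwarz, are the only points requiring care.
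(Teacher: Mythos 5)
Your proof is correct, but it takes a genuinely different route from the paper's. The paper exhausts $\bN$ by the sets $\bN_n$ of multi-indices supported on $\{1,\dots,n\}$ and evaluates each such partial sum in closed form via the univariate identity \eqref{g55}: the diagonal computation $\sum_{\bn\in\bN_n}\prod_j|e_{\nu_j,j}(x_j)|^2=\exp(-2\sum_{j>n}\sigma_j^2x_j^2)$ gives absolute convergence and the vanishing off $\X$, while $\sum_{\bn\in\bN_n}\prod_j e_{\nu_j,j}(x_j)e_{\nu_j,j}(y_j)=\prod_{j\le n}\ell_j(x_j,y_j)\cdot\exp(-\sum_{j>n}\sigma_j^2(x_j^2+y_j^2))$ yields $\tilde L=L$ on $\X$ in the limit $n\to\infty$; this is entirely self-contained. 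You instead factor each term as the common Gaussian prefactor $P=\prod_j v_j(x_j)v_j(y_j)$ times a finite polynomial product, so that the $\X$-dichotomy is isolated in whether $P$ vanishes, and you then settle the case $\bx,\by\in\X$ by the sum--product interchange lemma \citet[Lem.~B.1]{GHHRW2020}, exactly as the paper does in the Hermite setting in Lemma~\ref{l77}, verifying its hypothesis $\sum_j\bigl(\exp(2\sigma_j^2|x_jy_j|)-1\bigr)<\infty$ via Cauchy--Schwarz. All your steps check out (in particular $\tilde{e}_{\nu,j}(x)=(\sqrt2\sigma_j)^\nu x^\nu/\sqrt{\nu!}$, the inner sums $\exp(2\sigma_j^2x_jy_j)$, and the exponent recombination using $\bx-\by\in\X$). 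Your version buys a transparent explanation of why the domain $\X$ appears and an explicit structural parallel with the Hermite case, at the modest cost of importing the external interchange lemma; the paper's version gets absolute convergence and the identification of the limit in one sweep from exact partial sums without any auxiliary result.
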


\begin{proof}
Let $\bx \in \R^\N$ and $n \in \N$, and let $\bN_n$ denote the set of all 
sequences $\bn \in \bN$ with $\nu_j = 0$ for every $j > n$. Since 
\eqref{g55} yields
$\sum_{\nu \in \N_0} |e_{\nu,j}(x)|^2 = 1$ for $j \in \N$ 
and $x \in \R$, we obtain
\[
\sum_{\bn\in \bN_n} \prod_{j=1}^n |e_{\nu_j,j}(x_j)|^2 = 1.
\]
It follows that
\[
\sum_{\bn\in\bN_n} \prod_{j=1}^\infty |e_{\nu_j,j}(x_j)|^2 
= 
\prod^\infty_{j=n+1} |e_{0,j} (x_{j})|^2 = 
\exp \biggl( - 2 \sum^\infty_{j=n+1} \sigma_{j}^2 x_{j}^2 \biggr),
\]
and hereby
\[
\sum_{\bn\in\bN} \prod_{j=1}^\infty |e_{\nu_j,j}(x_j)|^2 
= \lim_{n\to\infty} 
\sum_{\bn\in\bN_n} \prod_{j=1}^\infty |e_{\nu_j,j}(x_j)|^2
=
\begin{cases}
1, & \text{if $\bx \in \X$,}\\
0, & \text{otherwise.}
\end{cases}
\] 
This yields the absolute convergence, as claimed, and
$\tilde{L}(\bx,\by)=0$ if $\bx \not\in \X$ or $\by \not\in \X$.

Next we verify $\tilde{L}(\bx,\by) = L(\bx,\by)$ for
$\bx,\by \in \X$. In the latter case
\[
\sum_{\bn\in\bN_n} \prod_{j=1}^\infty 
e_{\nu_j,j}(x_j) \cdot e_{\nu_j,j}(y_j)
= \prod_{j=1}^n \ell_j (x_j,y_j) \cdot
\exp\biggl(-\sum^\infty_{j=n+1}\sigma_{j}^2 \cdot (x_{j}^2+y_j^2)\biggr).
\]
In the limit $n \to \infty$ we obtain the claim.
\end{proof}

Because of its series representation, the function
$\tilde{L}$ is a reproducing kernel on $\R^\N \times \R^\N$. 
Since $L - \tilde{L} = L \cdot 1_{\Z \times \Z}$
with $\Z := \R^\N \setminus \X$, the function $L - \tilde{L}$ is a 
reproducing kernel on $\R^\N \times \R^\N$, too.

\begin{prop}\label{l53}
Assume that
\[
\X \subsetneq \Y \subseteq \R^\N.
\]
Then 
\begin{itemize}
\item[(i)]
$H(L_\Y)$ is the orthogonal sum of the closed, proper subspaces
$H(\tilde{L}_\Y)$ and $H(L_\Y-\tilde{L}_\Y)$,
\item[(ii)]
$H(\tilde{L}_\Y) = \{ f \in H(L_\Y) \colon f_{\Y \setminus \X} =0\}$,
\item[(iii)]
$\Phi$ is an isometric isomorphism between $G^{(\bv)}$ and 
$H(\tilde{L}_\Y)$. 
\end{itemize}
\end{prop}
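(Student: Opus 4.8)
The plan is to obtain (i) and (ii) from Aronszajn's sum theorem for reproducing kernels, fed by two elementary vanishing properties of the spaces involved, and then to deduce (iii) from the already settled case $\Y=\X$ by a zero-extension argument.

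\emph{Preliminary observations.} Recall that $\tilde{L}$ and $L-\tilde{L}$ are reproducing kernels on $\R^\N\times\R^\N$, hence so are the restrictions $\tilde{L}_\Y$ and $L_\Y-\tilde{L}_\Y=(L-\tilde{L})_\Y$, and $L_\Y=\tilde{L}_\Y+(L_\Y-\tilde{L}_\Y)$. By Lemma~\ref{l48} the kernel $\tilde{L}(\bx,\by)$ vanishes unless both $\bx,\by\in\X$, while $L-\tilde{L}=L\cdot 1_{\Z\times\Z}$ vanishes unless both $\bx,\by\in\Z=\R^\N\setminus\X$. The first key step is to record the corresponding vanishing of the \emph{functions} in the two spaces: since each section $\tilde{L}_\Y(\cdot,\by)$ vanishes on $\Y\setminus\X$ and point evaluations are continuous on an RKHS, every $f\in H(\tilde{L}_\Y)$ vanishes on $\Y\setminus\X$; symmetrically, every $g\in H(L_\Y-\tilde{L}_\Y)$ vanishes on $\X$ (using $\X\subseteq\Y$). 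Consequently any function lying in both spaces vanishes on all of $\Y$ and is therefore $0$, so $H(\tilde{L}_\Y)\cap H(L_\Y-\tilde{L}_\Y)=\{0\}$.

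\emph{Parts (i) and (ii).} With the trivial-intersection property in hand, I would invoke Aronszajn's sum theorem for the decomposition $L_\Y=\tilde{L}_\Y+(L_\Y-\tilde{L}_\Y)$: when the two summand spaces meet only in $0$, the theorem yields that $H(L_\Y)$ is the \emph{orthogonal} internal direct sum of isometric copies of $H(\tilde{L}_\Y)$ and $H(L_\Y-\tilde{L}_\Y)$, with $\|f_1+f_2\|_{H(L_\Y)}^2=\|f_1\|^2+\|f_2\|^2$. Both summands are closed (being isometric images of complete spaces) and proper: each contains a nonzero kernel section, e.g.\ $\tilde{L}_\Y(\cdot,\by)$ with $\by\in\X$, respectively $(L_\Y-\tilde{L}_\Y)(\cdot,\by)$ with $\by\in\Y\setminus\X\neq\emptyset$, both of diagonal value $1$; hence neither is $\{0\}$, so in a two-term orthogonal decomposition neither equals the whole space. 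This is (i). For (ii) the inclusion ``$\subseteq$'' is the vanishing property above. Conversely, given $f\in H(L_\Y)$ with $f_{\Y\setminus\X}=0$, I would write $f=f_1+f_2$ as in (i); then $f_2=f-f_1$ vanishes on $\Y\setminus\X$ (both $f$ and $f_1$ do) and on $\X$ (by the vanishing property of $H(L_\Y-\tilde{L}_\Y)$), so $f_2\equiv 0$ on $\Y$ and $f=f_1\in H(\tilde{L}_\Y)$.

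\emph{Part (iii).} The case $\Y=\X$ is already settled: $\Phi$ is an isometric isomorphism of $G^{(\bv)}$ onto $H(L_\X)$. For general $\Y\supsetneq\X$ the point is that $\Phi$ produces nothing new off $\X$. Indeed, the feature functions $\psi_\bn(\bx):=\prod_{j=1}^\infty e_{\nu_j,j}(x_j)=\Phi e_\bn(\bx)$ satisfy $\sum_{\bn\in\bN}|\psi_\bn(\bx)|^2=0$ for $\bx\notin\X$ by Lemma~\ref{l48}, so $\psi_\bn(\bx)=0$ for every $\bn$ and every $\bx\in\Y\setminus\X$; hence $\Phi f$ is the extension by zero to $\Y$ of the function $\Phi f|_\X\in H(L_\X)$. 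It then remains to identify $H(\tilde{L}_\Y)$ with $H(L_\X)$ under this extension. Since $\tilde{L}_\Y$ vanishes off $\X\times\X$ and restricts there to $\tilde{L}_\X=L_\X$, every $f\in H(\tilde{L}_\Y)$ is the zero-extension of $f|_\X$, and the restriction map $f\mapsto f|_\X$ sends the generators $\tilde{L}_\Y(\cdot,\by)$, $\by\in\X$, to $L_\X(\cdot,\by)$ preserving inner products; it is therefore an isometric isomorphism of $H(\tilde{L}_\Y)$ onto $H(L_\X)$ with inverse the zero-extension. Composing with the $\Y=\X$ isomorphism $\Phi\colon G^{(\bv)}\to H(L_\X)$ shows that $\Phi\colon G^{(\bv)}\to H(\tilde{L}_\Y)$ is an isometric isomorphism. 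I expect the only genuinely delicate point to be the orthogonality in (i): the algebraic direct-sum decomposition is immediate, but ``disjoint supports'' of the two function classes on $\Y$ does \emph{not} by itself give orthogonality in $H(L_\Y)$ (the RKHS inner product is not an integral); promoting the splitting to an orthogonal decomposition with additive squared norms is exactly the trivial-intersection refinement of Aronszajn's theorem, which must be cited rather than the bare sum theorem. The remaining work — the two vanishing properties and the zero-extension identification — is routine RKHS bookkeeping.
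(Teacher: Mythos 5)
Your proof is correct and follows essentially the same route as the paper: both arguments rest on the observation that functions in $H(\tilde{L}_\Y)$ vanish on $\Y\setminus\X$ while functions in $H(L_\Y-\tilde{L}_\Y)$ vanish on $\X$ (the paper phrases this via the diagonal values $\tilde{L}(\bx,\bx)$ and $(L-\tilde{L})(\bx,\bx)$), so the two summands are nontrivial with trivial intersection, and then the trivial-intersection case of Aronszajn's sum theorem gives (i) and (ii), while (iii) follows from $\Phi f$ vanishing off $\X$. Your write-up merely makes explicit the steps the paper compresses into ``This yields (i) and (ii),'' including the correct caveat that the orthogonality comes from the trivial intersection and not from disjoint supports alone.
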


\begin{proof}
For $\bx \in \Y \setminus \X$ we have $\tilde{L} (\bx,\bx) = 0$
and $(L-\tilde{L})(\bx,\bx) = 1$,
while $\tilde{L}(\bx,\bx)=1$ and $(L-\tilde{L})(\bx,\bx) = 0$ for 
$\bx \in \X$, see Lemma~\ref{l48}.
It follows that $H(\tilde{L}_\Y)$ and $H(L_\Y - \tilde{L}_\Y)$ are
non-trivial, but have a trivial intersection. This yields (i) and
(ii).

Let $\bx \in \Y \setminus \X$. Note that $\tilde{L}(\bx,\bx) = 0$ is 
equivalent to
\[
\prod_{j=1}^\infty e_{\nu_j,j}(x_j) = 0
\]
for every $\bn \in \bN$.
It follows that $\Phi f(\bx) = 0$ for every $f \in G^{(\bv)}$.
We conclude that $G^{(\bv)}$ is identified with 
$H(\tilde{L}_\Y)$ via $\Phi$, as claimed in (iii). 
\end{proof}

\begin{rem}\label{rem:summarize}
Lemma~\ref{l48} and Proposition~\ref{l53}
show that the proper choice of domain of the reproducing kernel 
$L$ defined in \eqref{def:kernel_L} is $\X := \ell^2(\bs^2)$, since this 
is the maximal domain such that the canonical mapping 
$\Phi \colon G^{(\bv)} \to \R^\X$ defined in \eqref{def:Phi} defines an 
isometric isomorphism between $G^{(\bv)}$ and 
$\Phi(G^{(\bv)}) = H(L_\X) = H(\tilde{L}_\X)$.
If we consider strictly larger domains $\Y$, then we still have 
$\Phi(G^{(\bv)}) = H(\tilde{L}_\Y)$, but $ H(\tilde{L}_\Y) \neq H(L_\Y)$. 
\end{rem}

\section{Isomorphisms between Gaussian Spaces and Hermite Spaces}\label{s:iso}

\subsection{Functions of a Single Variable}\label{s5.1}

It is known that the Gaussian kernel $\ell_\sigma$ with shape parameter 
$\sigma > 0$ can be represented in terms of the Hermite polynomials
$h_\nu$,
see, e.g., \citet[Sec.~4.3.1]{RW2006} and \citet[Sec.~3]{FHW2012}.
A starting point to derive such a representation is Mehler's formula
\[
\exp \left( \frac{1}{1-\beta^2} \cdot 
\left( \beta \cdot x y - \tfrac{1}{2} \beta^2 \cdot (x^2 + y^2) \right)
\right) = m(x,y)
\]
with
\[
m(x,y) := \left(1-\beta^2\right)^{1/2} \cdot 
\sum_{\nu \in \N_0} \beta^\nu \cdot h_\nu(x) \cdot h_\nu(y),
\]
which holds for all $x,y \in \R$ and $\beta \in {]0,1[}$,
see \citet[Eqn.~(22) on p. 194]{HTF1953}. Note that 
\begin{equation}\label{eq201}
\sum_{\nu \in \N_0} \beta^\nu \cdot |h_\nu(x)|^2 < \infty
\end{equation}
for every $x \in \R$ already follows from Cram\'er's inequality, 
see Lemma~\ref{l41}.
We add that Mehler's formula also yields a closed
form representation of Hermite kernels in the case (EG) with $b=1$,
see \citet[p.~186]{IL15}.

Using
\[
\beta \cdot x y - \tfrac{1}{2} \beta^2 \cdot (x^2 + y^2)
=
- \tfrac{1}{2} \beta \cdot (x-y)^2 + \tfrac{1}{2} \beta (1-\beta)
\cdot (x^2 + y^2)
\]
and introducing a scaling parameter $c>0$,
Mehler's formula may be rewritten as
\[
\exp \left( - \frac{c^2 \beta}
{2 (1-\beta^2)} \cdot (x-y)^2\right) 
= 
\exp \left( - \tau \cdot (x^2 + y^2) \right) 
\cdot m(c x,c y)
\]
with
\[
\tau := \frac{c^2 \beta} {2 (1+\beta)}.
\]
For $f \in L^2(\mu_0)$ and $x \in \R$ we define
\[
qf (x) := c^{1/2} \cdot \exp (-\tau \cdot x^2) \cdot f(c x).
\]
If
\begin{equation}\label{eq203}
\frac{c^2 \beta}{2 (1-\beta^2)} = \sigma^2
\end{equation}
then
\[
\ell_\sigma(x,y) =
\frac{\left(1-\beta^2\right)^{1/2}}{c} \cdot 
\sum_{\nu \in \N_0} \beta^\nu \cdot q h_\nu (x) \cdot q h_\nu(y)
\]
for all $x,y \in \R$.

\begin{lemma}\label{l200}
The mapping $q$ is an isometric isomorphism on $L^2(\mu_0)$ if and
only if
\begin{equation}\label{eq200}
\tau = \frac{c^2-1}{4}.
\end{equation}
\end{lemma}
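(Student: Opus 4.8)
The plan is to reduce the claim to a direct computation of the $L^2(\mu_0)$-norm of $qf$ and then to compare the resulting Gaussian weight with the density of $\mu_0$. Recall that $\mu_0$ has Lebesgue density $(2\pi)^{-1/2}\exp(-x^2/2)$. First I would write out
\[
\|qf\|_{L^2(\mu_0)}^2 = \frac{c}{\sqrt{2\pi}} \int_\R
\exp\bigl(-2\tau x^2 - \tfrac12 x^2\bigr)\, |f(cx)|^2 \, dx
\]
and substitute $u=cx$. This turns the expression into
\[
\|qf\|_{L^2(\mu_0)}^2 = \frac{1}{\sqrt{2\pi}} \int_\R
\exp\Bigl(-\frac{4\tau+1}{2c^2}\,u^2\Bigr)\, |f(u)|^2 \, du,
\]
so that the entire question becomes a comparison between the weight $\exp(-\frac{4\tau+1}{2c^2}u^2)$ and the density $\exp(-u^2/2)$ of $\mu_0$.

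For the \emph{only if} direction I would use that an isometric isomorphism is in particular an isometry, so $\|qf\|_{L^2(\mu_0)} = \|f\|_{L^2(\mu_0)}$ for every $f \in L^2(\mu_0)$. Testing this identity on indicator functions $f = 1_A$ of bounded measurable sets $A$ (which lie in $L^2(\mu_0)$ since $\mu_0$ is finite) forces the continuous function $\exp(-\frac{4\tau+1}{2c^2}u^2) - \exp(-u^2/2)$ to integrate to zero over every bounded set, hence to vanish almost everywhere and therefore identically. Equality of these two Gaussians at all $u$ is possible only when the exponents agree, i.e.\ $\frac{4\tau+1}{2c^2} = \frac12$, which rearranges precisely to $\tau = (c^2-1)/4$.

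For the \emph{if} direction, assuming $\tau = (c^2-1)/4$ gives $\frac{4\tau+1}{2c^2} = \frac12$, and the displayed identity immediately yields $\|qf\|_{L^2(\mu_0)} = \|f\|_{L^2(\mu_0)}$ for all $f$; thus $q$ is a well-defined linear isometry. It remains to establish surjectivity. Here I would exhibit the inverse explicitly by
\[
\tilde q g(x) := c^{-1/2} \exp(\tau x^2/c^2)\, g(x/c),
\]
a transformation of the same type but with scaling parameter $1/c$, and then verify two things: that the analogous change-of-variables computation (with the roles of $c$ and $1/c$ interchanged) makes $\tilde q$ an isometry as well — in particular that $\tilde q$ maps $L^2(\mu_0)$ into itself — and that $q\tilde q = \tilde q q = \mathrm{id}$ by a one-line substitution. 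A linear surjective isometry of a Hilbert space is an isometric isomorphism, completing the proof.

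I expect no serious obstacle: the substance is the change of variables and the passage from ``norm identity for every $f$'' to ``equality of the two Gaussian weights.'' The only points requiring a little care are well-definedness and surjectivity — that $q$ actually maps $L^2(\mu_0)$ into and onto itself — but both are dispatched by the same norm computation, applied to $q$ and to its explicit inverse $\tilde q$.
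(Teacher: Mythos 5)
Your proposal is correct and follows essentially the same route as the paper: a change of variables reduces the isometry condition to an equality of two Gaussian weights (the paper phrases this via the image measure $t\mu_0$ and its density ratio with respect to $\mu_0$, you via the Lebesgue densities directly), and surjectivity is handled by writing down the explicit inverse. Your extra step of testing the norm identity on indicator functions to justify the ``only if'' direction, and your observation that $\tilde q$ is again a map of the same form with parameters $1/c$ and $(c^{-2}-1)/4$, are both sound refinements of what the paper leaves implicit.
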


\begin{proof}
Let $t \colon \R \to \R$ be given by $t(x) := c x$.
The image measure $t\mu_0$ of $\mu_0$ with respect to $t$ is
the normal distribution with zero mean and variance $c^2$.
The density of $t\mu_0$ with respect to $\mu_0$
is therefore given by the density ratio
\[
\phi (x) :=
c^{-1} \cdot 
\exp\left(- \frac{x^2}{2 c^2}\right) \cdot
\exp\left(\frac{x^2}{2}\right)
= 
c^{-1} \cdot 
\exp \left( \frac{c^2-1}{2 c^2} \cdot x^2 \right).
\]
For $f \in L^2(\mu_0)$ this yields
\begin{align*}
\int_\R |q f (x)|^2 \, d \mu_0(x)
&=
c \cdot \int_\R
\exp \left( - 2 \tau/c^2 \cdot |t(x)|^2\right) \cdot |f(t(x))|^2 \,
d \mu_0 (x) \\
&=
c \cdot \int_\R
\phi(x) \cdot \exp \left( - 2 \tau/c^2 \cdot |x|^2\right) \cdot |f(x)|^2 \,
d \mu_0 (x). 
\end{align*}
We conclude that $q$ is an isometry on $L^2(\mu_0)$ if and only if 
\[
\frac{c^2-1}{2 c^2} = \frac{2\tau}{c^2},
\] 
which is equivalent to \eqref{eq200}. 
Assume that \eqref{eq200} is satisfied. It follows that
\[
qf (x) = \left|\phi(t(x))\right|^{-1/2} \cdot f(t(x)).
\]
For $g \in L^2(\mu_0)$ and $f(x) := \left|\phi(x)\right|^{1/2}
\cdot g\left(t^{-1}(x)\right)$ we obtain $f \in L^2(\mu_0)$ as well
as $qf = g$.
\end{proof}

\begin{lemma}\label{l201}
For every $\sigma > 0$ the unique solution 
$(\beta,c) \in {]0,1[} \times {]0,\infty[}$ of
\eqref{eq203} and \eqref{eq200} is given by
\begin{equation}\label{eq202}
c = \left(1+8\sigma^2\right)^{1/4}
\qquad \text{and} \qquad
\beta 
= 1 - \frac{2}{1+c^2}.
\end{equation}
\end{lemma}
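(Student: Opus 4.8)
The plan is to decouple the two conditions: equation \eqref{eq200}, which together with the definition $\tau = \frac{c^2\beta}{2(1+\beta)}$ is a relation between $\beta$ and $c$ alone, and equation \eqref{eq203}, which then pins down $c$ in terms of $\sigma$.

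First I would rewrite \eqref{eq200}. Substituting the definition of $\tau$ turns it into $\frac{c^2\beta}{2(1+\beta)} = \frac{c^2-1}{4}$. Since $c>0$ and $1+\beta>0$ on the admissible domain, cross-multiplication is an equivalence and yields the linear relation $\beta(c^2+1) = c^2-1$, i.e.
\[
\beta = \frac{c^2-1}{c^2+1} = 1 - \frac{2}{c^2+1},
\]
which is already the asserted formula for $\beta$ in terms of $c$. Because this relation is linear in $\beta$, it determines $\beta$ uniquely once $c$ is fixed.

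Next I would insert this expression into \eqref{eq203}. The key simplification is to compute $1-\beta^2 = (1-\beta)(1+\beta)$ from $1-\beta = \frac{2}{c^2+1}$ and $1+\beta = \frac{2c^2}{c^2+1}$, giving $1-\beta^2 = \frac{4c^2}{(c^2+1)^2}$. Plugging this together with the expression for $\beta$ into the left-hand side of \eqref{eq203} collapses the fraction to $\frac{c^4-1}{8}$, so \eqref{eq203} becomes $c^4-1 = 8\sigma^2$, whence $c = (1+8\sigma^2)^{1/4}$ as the unique positive root.

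Finally I would verify the range constraints and assemble the uniqueness claim. Since $\sigma>0$ we have $c^4 = 1+8\sigma^2 > 1$, so $c > 1 > 0$; consequently $c^2-1>0$ forces $\beta>0$, while $\beta = 1 - \frac{2}{c^2+1} < 1$, so indeed $(\beta,c) \in {]0,1[}\times{]0,\infty[}$. Uniqueness is then automatic, since each step above is an equivalence on the admissible domain: \eqref{eq200} determines $\beta$ uniquely from $c$, and \eqref{eq203} determines $c$ uniquely as a positive fourth root. I do not anticipate a genuine obstacle here — the computation is elementary — and the only point requiring care is to keep the manipulations as equivalences (in particular to justify the cross-multiplications via $c>0$ and $1+\beta>0$), so that no spurious solution is introduced and none is lost.
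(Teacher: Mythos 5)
Your proof is correct and follows essentially the same route as the paper: both treat \eqref{eq200} as a relation between $\beta$ and $c$ alone and substitute it into \eqref{eq203}; the paper eliminates $c$ via $c^2=(1+\beta)/(1-\beta)$ and argues uniqueness from monotonicity of $\beta\mapsto\beta/(1-\beta)^2$ before verifying the stated formulas, whereas you eliminate $\beta$ via $\beta=(c^2-1)/(c^2+1)$ and obtain $c^4=1+8\sigma^2$ directly. The computations and the range checks are all valid, so this is a sound, slightly more constructive rendering of the same argument.
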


\begin{proof}
Let $(\beta,c) \in {]0,1[} \times {]0,\infty[}$ satisfy
\eqref{eq203} and \eqref{eq200}.
From \eqref{eq200} we obtain
\[
c^2 = \frac{1+\beta}{1-\beta},
\]
and together with \eqref{eq203} this yields
\[
\frac{\beta}{2 (1-\beta)^2} = \sigma^2.
\]
Since $\beta \mapsto \beta/(1-\beta)^2$ is strictly increasing
on ${]0,1[}$, we conclude that the first component $\beta$ of
the solution of \eqref{eq203} and \eqref{eq200} is uniquely determined.
Due to \eqref{eq203} the same holds true for the second component $c$.

Conversely, it is straightforward to verify that $c$ and $\beta$ 
according to \eqref{eq202} belong to ${]0,\infty[}$ and ${]0,1[}$,
respectively, and satisfy \eqref{eq203} and \eqref{eq200}.
\end{proof}

In the sequel we assume that $c$ and $\beta$ are chosen according
to \eqref{eq202}, and we introduce the Fourier weights
\[
\phantom{\qquad\quad \nu \in \N_0.}
\alpha_{\nu} 
:= \frac{1}{(1-\beta)\cdot \beta^\nu},
\qquad\quad \nu \in \N_0.
\]
Since
\[
\frac{\left(1-\beta^2\right)^{1/2}}{c} = 1 - \beta,
\]
we obtain the representation
\begin{equation}\label{rep_ell_sigma}
\phantom{\qquad\quad x,y \in \R,}
\ell_\sigma(x,y) =
\sum_{\nu \in \N_0} \alpha_\nu^{-1} \cdot q h_\nu (x) \cdot q h_\nu(y),
\qquad\quad x,y \in \R,
\end{equation}
of the Gaussian kernel $\ell_\sigma$ from Mehler's formula and
Lemma~\ref{l201}. Cf.\ \citet[p.~257]{FHW2012},
who consider the normal distribution with zero mean and
variance $1/2$, instead of the standard normal distribution $\mu_0$,
together with unnormalized Hermite polynomials.

Due to Lemma~\ref{l200} and Lemma~\ref{l201} the functions $qh_\nu$ with 
$\nu \in \N_0$ form an orthonormal basis of $L^2(\mu_0)$, and 
\eqref{eq201} corresponds to \eqref{g23}. We conclude that 
Proposition~\ref{l24},
with the orthonormal basis $(q h_\nu)_{\nu \in \N_0}$, provides a 
characterization of $H(\ell_\sigma)$ as a linear 
subspace of $L^2(\mu_0)$. 

We relate the Gaussian space $H(\ell_\sigma)$ to the Hermite space
$H(k_\sigma)$ with 
\begin{equation}\label{def_k_sigma}
\phantom{\qquad\quad x,y \in \R.}
k_\sigma(x,y) := 
\sum_{\nu \in \N_0}
\alpha_{\nu}^{-1} \cdot h_\nu(x) \cdot h_\nu(y),
\qquad\quad x,y \in \R.
\end{equation}

\begin{prop}\label{l10}
The mapping $q|_{H(k_\sigma)}$ is 
an isometric isomorphism between $H(k_\sigma)$ and $H(\ell_\sigma)$.
\end{prop}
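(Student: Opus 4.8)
The plan is to recognize that the two spaces $H(k_\sigma)$ and $H(\ell_\sigma)$ are both produced by Proposition~\ref{l24} from \emph{the same} family of Fourier weights $(\alpha_\nu)_{\nu \in \N_0}$, the only difference being the underlying orthonormal basis of $L^2(\mu_0)$: for $H(k_\sigma)$ this basis is $(h_\nu)_{\nu \in \N_0}$, see \eqref{def_k_sigma}, whereas for $H(\ell_\sigma)$ it is $(qh_\nu)_{\nu \in \N_0}$, see \eqref{rep_ell_sigma}. The weights satisfy $\inf_{\nu \in \N_0} \alpha_\nu = \alpha_0 > 0$, since $(\alpha_\nu)$ is increasing in $\nu$ because $\beta \in {]0,1[}$, and \eqref{eq201} supplies the summability assumption \eqref{g23} for both bases; thus Proposition~\ref{l24} applies in both cases, as already noted in the text preceding the statement, and yields the explicit norm formulas
\[
\|f\|_{H(k_\sigma)}^2 = \sum_{\nu \in \N_0} \alpha_\nu \cdot |\scp{f}{h_\nu}_{L^2(\mu_0)}|^2,
\qquad
\|g\|_{H(\ell_\sigma)}^2 = \sum_{\nu \in \N_0} \alpha_\nu \cdot |\scp{g}{qh_\nu}_{L^2(\mu_0)}|^2.
\]

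The key step is to exploit that, by Lemmas~\ref{l200} and \ref{l201}, the map $q$ is an isometric isomorphism of $L^2(\mu_0)$ carrying the orthonormal basis $(h_\nu)$ onto the orthonormal basis $(qh_\nu)$. In particular $q$ preserves inner products, so that $\scp{qf}{qh_\nu}_{L^2(\mu_0)} = \scp{f}{h_\nu}_{L^2(\mu_0)}$ for every $f \in L^2(\mu_0)$ and every $\nu$. Writing the $L^2$-Fourier expansion $f = \sum_{\nu} c_\nu h_\nu$ with $c_\nu := \scp{f}{h_\nu}_{L^2(\mu_0)}$ for $f \in H(k_\sigma)$, continuity of $q$ gives $qf = \sum_{\nu} c_\nu\, qh_\nu$ in $L^2(\mu_0)$ and $\scp{qf}{qh_\nu}_{L^2(\mu_0)} = c_\nu$. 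Comparing the two norm formulas then shows at once that $f \in H(k_\sigma)$ if and only if $\sum_{\nu} \alpha_\nu |c_\nu|^2 < \infty$, which holds if and only if $qf \in H(\ell_\sigma)$, and in that case $\|qf\|_{H(\ell_\sigma)} = \|f\|_{H(k_\sigma)}$. Hence $q$ restricts to an isometric embedding of $H(k_\sigma)$ into $H(\ell_\sigma)$.

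For surjectivity I would argue symmetrically: given $g \in H(\ell_\sigma)$, its coefficients $d_\nu := \scp{g}{qh_\nu}_{L^2(\mu_0)}$ satisfy $\sum_{\nu} \alpha_\nu |d_\nu|^2 < \infty$, so the element $f := \sum_{\nu} d_\nu h_\nu$ lies in $H(k_\sigma)$ by the same characterization and satisfies $qf = g$. The only point requiring genuine care is the interchange $q\bigl(\sum_{\nu} c_\nu h_\nu\bigr) = \sum_{\nu} c_\nu\, qh_\nu$, that is, the commutation of $q$ with the $L^2$-Fourier expansion; this is immediate from the boundedness of $q$ on $L^2(\mu_0)$ together with the continuous embedding $H(k_\sigma) \hookrightarrow L^2(\mu_0)$ from Proposition~\ref{l24} (which makes the expansion converge in $L^2$), but it deserves to be stated explicitly, since both Hilbert-space norms are defined through $L^2$-inner products rather than through pointwise values.
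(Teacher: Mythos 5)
Your proposal is correct and follows essentially the same route as the paper's proof: both rely on Lemma~\ref{l200}, Lemma~\ref{l201}, the Mehler representation \eqref{rep_ell_sigma}, and the characterization \eqref{eq:charRKHS} from Proposition~\ref{l24} to transport the weighted-coefficient description of $H(k_\sigma)$ through $q$ and back. One small refinement: since membership in $H(\ell_\sigma)$ is a statement about genuine functions (Proposition~\ref{l24} builds the RKHS from the \emph{pointwise} limits of the Fourier partial sums), the interchange $q\bigl(\sum_{\nu} c_\nu h_\nu\bigr) = \sum_{\nu} c_\nu\, qh_\nu$ should be justified pointwise --- which is immediate because $q$ acts pointwise and the expansion of $f \in H(k_\sigma)$ converges absolutely at every point by Proposition~\ref{l24} --- rather than only via $L^2(\mu_0)$-boundedness of $q$ as you suggest; this is exactly the step the paper makes explicit with ``the pointwise convergence is preserved by $q$''.
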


\begin{proof}
We use Lemma~\ref{l200} and Lemma~\ref{l201} as well as 
\eqref{rep_ell_sigma} and Proposition~\ref{l24}.
Let $c_\nu \in \R$, $\nu \in \N_0$, satisfy 
$\sum_{\nu \in \N_0} \alpha_{\nu} \cdot |c_\nu|^2 < \infty$.
Then $f := \sum_{\nu \in \N_0} c_\nu \cdot h_\nu$ is in $H(k_\sigma)$, 
and the sum converges in $L^2(\mu_0)$ and absolutely at every 
point $x \in \R$. Observe that
the pointwise convergence is preserved by $q$, and therefore
$q f (x) = \sum_{\nu \in \N_0} c_\nu \cdot q h_\nu(x)$.
Moreover, $qf = \sum_{\nu \in \N_0} c_\nu \cdot q h_\nu$ with
convergence in $L^2(\mu_0)$. It follows that $q f \in H(\ell_\sigma)$
and $\|f\|_{H(k_\sigma)} = \|q f\|_{H(\ell_\sigma)}$.
Identity~\eqref{eq:charRKHS}, applied to $H(k_\sigma)$ as well as to 
$H(\ell_\sigma)$, yields that $q(H(k_\sigma)) = H(\ell_\sigma)$.
\end{proof}

\begin{rem}\label{r13}
We combine Proposition~\ref{l24}, 
Lemma~\ref{l200}, Lemma~\ref{l201}, and 
Proposition~\ref{l10}
to conclude that
\begin{center}
\begin{tikzcd}
H(k_\sigma)\arrow[r,hook]\arrow[d,"q|_{H(k_\sigma)}"]
&
L^2(\mu_0)
\arrow[d,"q"]
\\
H(\ell_\sigma)\arrow[r,hook]
&
L^2(\mu_0)
\end{tikzcd}
\end{center}
is a commutative diagram of bounded linear operators with isometric
isomorphisms in the vertical direction. 
\end{rem}

We close this subsection by commenting on inclusion relations 
between Gaussian spaces and Hermite spaces.

\begin{rem}\label{Rem_5.4}
Consider an arbitrary Hermite kernel $k$ and an arbitrary Gaussian
kernel $\ell_\sigma$. Since 
\[
|f(x) - f(y)|^2 \leq 
2 \, \|f\|^2_{H(\ell_\sigma)} \cdot (1 - \ell_\sigma(x,y))
\]
for all $f \in H(\ell_\sigma)$ and $x,y \in \R$, the functions from
$H(\ell_\sigma)$ are bounded. Therefore the space $H(\ell_\sigma)$
does not contain any non-trivial polynomial, while $H(k)$ obviously
contains the whole space $\Pi$ of polynomials. 
In particular,
\[
H(k) \not\subseteq  H(\ell_\sigma).
\]

Observe that $\ell_\sigma$ is translation-invariant.
General conditions for translation-invariant kernels 
$\ell$ that guarantee $H(\ell) \cap \Pi = \{0\}$ 
have recently been established in \citet{DZ21} and \citet{Karvonen21}.
\end{rem}

\begin{rem}\label{r99}
We briefly discuss inclusions of Gaussian spaces
in Hermite spaces. More precisely, we consider Hermite spaces
$H(k)$ with Fourier weights $\alpha_\nu := \beta^{-\nu}$ for
any $\beta \in {]0,1[}$.
The following holds true for every $\sigma > 0$.
There exist $0 < \beta^{(1)} \leq \beta^{(2)} < 1$ such that
\[
H(\ell_\sigma) \not\subseteq H(k) \qquad 
\text{if $0 < \beta < \beta^{(1)}$}
\]
and
\[
H(\ell_\sigma) \subsetneq H(k) \qquad 
\text{if $\beta^{(2)} < \beta < 1$.}
\]
In particular, for $\beta$ according to~\eqref{eq202}, where
$H(k) = H(k_\sigma)$ as vector spaces and
$q|_{H(k_\sigma)}$ is an isometric isomorphism between $H(k_\sigma)$ and 
$H(\ell_\sigma)$,
we have $\beta < \beta^{(1)}$, and therefore 
\[
H(\ell_\sigma) \not\subseteq H(k_\sigma). 
\]
So far, our analysis only yields values $\beta^{(1)} <
\beta^{(2)}$; therefore we skip the proof.
\end{rem}

\subsection{Functions of Infinitely Many Variables}

As in Section~\ref{s4.2},
we consider again a sequence $\bs$ of shape parameters
$\sigma_j > 0$ and the corresponding sequence of Gaussian
kernels $\ell_j$. We study Gaussian spaces and Hermite spaces
of functions of infinitely many variables with Fourier weights
given by 
\[
\phantom{\qquad\quad \nu \in \N_0,\ j \in \N,}
\alpha_{\nu, j} := \frac{1}{(1-\beta_j)\cdot \beta^\nu_j},
\qquad\quad \nu \in \N_0,\ j \in \N,
\]
where 
\begin{equation*}
c_j := \left(1+8\sigma_j^2\right)^{1/4}
\qquad \text{and} \qquad
\beta_j 
:= 1 - \frac{2}{1+c_j^2}\,,
\end{equation*}
cf. Lemma~\ref{l201},
under the assumption
\begin{equation}\label{sigma_squared}
\sum_{j\in\N} \sigma^2_j < \infty.
\end{equation}
Note that \eqref{sigma_squared} is equivalent to 
\begin{equation}\label{g69}
\sum_{j \in \N} |c_j - 1| < \infty.
\end{equation}
As in Section~\ref{s4.2}, we consider the domain $\X := \ell^2(\bs^2)$.

We will establish the counterpart to Remark~\ref{r13} 
via tensorization and identification of the corresponding
incomplete tensor product spaces with the reproducing kernel Hilbert
spaces $H(K)$ and $H(L_\X)$ or the space $L^2(\mu)$,
respectively. 

\begin{lemma}\label{lemma:5.7}
Assume that \eqref{sigma_squared} is satisfied. Then we have
{\rm (\ha)}--{\rm (\hc)}, and $\X$ is the maximal domain of the kernel $K$ 
and satisfies $\mu(\X) =1$.
\end{lemma}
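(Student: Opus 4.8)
The plan is to verify (\ha)--(\hc) by direct computation and then to identify the maximal domain of $K$ by comparison with the exponential-growth case treated in Proposition~\ref{Lemma:Char_X_EG}, after which $\mu(\X)=1$ follows from Proposition~\ref{lem:domain2}. First I would record the basic data of the weights. Since $\sigma_j>0$ we have $c_j>1$, hence $\beta_j\in{]0,1[}$, and $1-\beta_j = 2/(1+c_j^2)$, so that
\[
\alpha_{0,j} = \frac{1}{1-\beta_j} = \frac{1+c_j^2}{2}
\qquad\text{and}\qquad
\alpha_{\nu,j}^{-1} = (1-\beta_j)\cdot\beta_j^\nu \quad (\nu\in\N_0).
\]
Property (\ha) is then immediate: $\alpha_{\nu,j}=\beta_j^{-\nu}/(1-\beta_j)$ is increasing in $\nu$ because $\beta_j\in{]0,1[}$, and the geometric series gives $\sum_{\nu\ge1}\alpha_{\nu,j}^{-1}\cdot\nu^{-1/2}\le\sum_{\nu\ge1}\alpha_{\nu,j}^{-1}=\beta_j<\infty$.

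For (\hb) and (\hc) I would invoke the identity $\sigma_j^2 = \beta_j/(2(1-\beta_j)^2)$ obtained in the proof of Lemma~\ref{l201}, which yields $\beta_j = 2\sigma_j^2(1-\beta_j)^2\le 2\sigma_j^2$, together with $\alpha_{0,j}-1 = (c_j^2-1)/2 \le 2\sigma_j^2$ using $\sqrt{1+8\sigma_j^2}\le 1+4\sigma_j^2$. Since $\alpha_{0,j}\ge1$, assumption \eqref{sigma_squared} gives $\sum_j|\alpha_{0,j}-1|\le 2\sum_j\sigma_j^2<\infty$, which is (\hb), and $\sum_{\nu\ge1,\,j\ge1}\alpha_{\nu,j}^{-1}=\sum_j\beta_j\le 2\sum_j\sigma_j^2<\infty$, which is (\hc) with $j_0=1$.

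For the maximal domain I would exploit that these are, up to a bounded factor, exponential weights with $b_j=1$. Introduce the auxiliary weights $\tilde\alpha_{\nu,j}:=\beta_j^{-\nu}=2^{r_j\nu}$ for $\nu\ge1$ with $r_j:=-\log_2\beta_j>0$, and $\tilde\alpha_{0,j}:=1$. Because $\sum_j 2^{-r_j}=\sum_j\beta_j<\infty$ and $\inf_j b_j=1>0$, these fall under case (EG) with exponential growth, so by Lemma~\ref{l2} and Proposition~\ref{Lemma:Char_X_EG} the maximal domain of the associated kernel $\tilde K$ is $\ell^2(\tilde\ba_1^{-1})=\ell^2\big((\beta_j)_{j}\big)$. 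By Lemma~\ref{l51} the maximal domains of $K$ and $\tilde K$ are $\{\bx:\sum_j k_j^*(x_j,x_j)<\infty\}$ and $\{\bx:\sum_j\tilde k_j^*(x_j,x_j)<\infty\}$, and since $\alpha_{\nu,j}^{-1}=(1-\beta_j)\,\tilde\alpha_{\nu,j}^{-1}$ for $\nu\ge1$ we have $k_j^*(x,x)=(1-\beta_j)\,\tilde k_j^*(x,x)$. As $\beta_j\to0$, the factor $1-\beta_j$ lies in $[\tfrac12,1]$ for all large $j$, so the two domains coincide. Finally, the same bounds give $\tfrac12\sigma_j^2\le\beta_j\le 2\sigma_j^2$ for large $j$ via $\beta_j=2\sigma_j^2(1-\beta_j)^2$, whence $\ell^2\big((\beta_j)_j\big)=\ell^2(\bs^2)=\X$, identifying $\X$ as the maximal domain of $K$. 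The equality $\mu(\X)=1$ then follows from Proposition~\ref{lem:domain2}, whose hypotheses (\ha) and (\hc) have just been established.

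The main obstacle is precisely the domain identification: the weights are not literally of the form $2^{r_j\nu}$ because of the normalizing factor $(1-\beta_j)^{-1}$, so Proposition~\ref{Lemma:Char_X_EG} cannot be quoted verbatim. The device that makes everything routine is to compare with the pure exponential weights $\tilde\alpha$ and to observe that both the normalizing factor $1-\beta_j$ and the ratio $\beta_j/\sigma_j^2=2(1-\beta_j)^2$ converge to positive constants; thus the three scales $k_j^*\asymp\tilde k_j^*$ and $\beta_j\asymp\sigma_j^2$ agree asymptotically. The finitely many small-$j$ terms are harmless, since each $k_j^*(x_j,x_j)=\sum_{\nu\ge1}\alpha_{\nu,j}^{-1}|h_\nu(x_j)|^2$ is finite for any fixed $x_j$ by (\ha) and Lemma~\ref{l42}, so omitting them does not affect the convergence of the defining series.
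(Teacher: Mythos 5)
Your proposal is correct and takes essentially the same route as the paper's proof: both verify (\ha)--(\hc) directly from $\beta_j\asymp\sigma_j^2$, reduce the maximal-domain question to the exponential-growth case $b_j=1$ of Proposition~\ref{Lemma:Char_X_EG} by observing that the normalizing factor $1-\beta_j$ is bounded away from $0$ and above by $1$, and then invoke Proposition~\ref{lem:domain2} for $\mu(\X)=1$. Your write-up merely makes explicit some estimates the paper leaves as ``weak asymptotics.''
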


\begin{proof}
From \eqref{sigma_squared} we obtain
the weak asymptotics
\[
\alpha^{-1}_{1,j} = (1-\beta_j) \cdot \beta_j 
\asymp \beta_j \asymp \sigma^2_j,
\]
the convergence $\sum_{j \in \N} \beta_j < \infty$,
and $\ell^2(\ba^{-1}_1) = \ell^2(\bs^2)$ for
$\ba^{-1}_1 := (\alpha_{1,j}^{-1})_{j \in \N}$.

Let $r_j > 0$ be defined by $\beta_j = 2^{-r_j}$. 
Note that the Fourier weights 
\[
\phantom{\qquad\quad \nu \in \N_0,\ j \in \N,}
\beta_j^{-\nu} = 2^{r_j \cdot \nu},
\qquad\quad \nu \in \N_0,\ j \in \N,
\]
are of the form (EG) with $b_j := 1$.
Due to Lemma~\ref{l2} we have (\ha)--(\hc) in this case, and 
$\ell(\bs^2)$ is the maximal domain of the corresponding Hermite kernel,
see Proposition~\ref{Lemma:Char_X_EG}.

It remains to consider
$\alpha_{\nu,j} = (1-\beta_j)^{-1} \cdot \beta_j^{-\nu}$ instead
of $\beta_j^{-\nu}$. Since
\[
0 < \inf_{j \in \N} (1-\beta_j) \leq \sup_{j \in \N} 
(1-\beta_j) = 1,
\] 
(\ha), (\hc), and the maximal domain are not affected by this
change. Since 
\[
\alpha_{0,j} - 1 = \frac{\beta_j}{1-\beta_j},
\]
we also have (\hb). Due to Proposition~\ref{lem:domain2}
we have $\mu(\X) =1$.
\end{proof}

Put 
\[
c_\ast := \prod_{j=1}^\infty c_j > 1,
\]
cf.\ \eqref{g69}. Let $t \colon \R^\N \to \R^\N$ and 
$\phi \colon \X \to \left[0,\infty\right[$ be given by
\[
t(\bx) := (c_1 \, x_1, c_2 \, x_2, \dots)
\]
for $\bx \in \R^\N$ and by
\[
\phi(\bx) := c^{-1}_\ast \cdot
\exp \biggl( \sum_{j \in \N} \frac{c_j^2-1}{2c_j^2} \, x_j^2 \biggl)
\]
for $\bx \in \X$.
For $f \in L^2(\mu)$ and $\bx \in \X$ we define 
\[
Qf (\bx) := 
\Bigl( \phi \bigl(t(\bx)\bigr) \Bigr)^{-1/2} \cdot f \bigl(t(\bx)\bigr)
= c^{1/2}_\ast \cdot 
\exp \biggl( - \sum_{j\in\N} \frac{c_j^2-1}{4} \,x_j^2 \biggr) 
\cdot f\bigl(t(\bx)\bigr). 
\]

\begin{theo}\label{t1}
Assume that \eqref{sigma_squared} is satisfied. Then
the mapping $Q$ is an isometric isomorphism on $L^2(\mu)$, and its 
restriction $Q|_{H(K)}$ is an 
isometric isomorphism
between $H(K)$ and $H(L_\X)$.
\end{theo}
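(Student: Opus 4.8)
The plan is to realize $Q$ as the infinite tensor product $\bigotimes_{j\in\N} q_j$ of the single-variable operators attached to the shape parameters $\sigma_j$, and then to transport the single-variable commutative diagram of Remark~\ref{r13} to the tensor-product level. For each $j$, Lemma~\ref{l200} and Lemma~\ref{l201} (applied with $\sigma:=\sigma_j$, hence $c:=c_j$, $\beta:=\beta_j$, and $\tau_j:=(c_j^2-1)/4$) show that
\[
q_j f(x) := c_j^{1/2}\cdot\exp(-\tau_j\, x^2)\cdot f(c_j x)
\]
is an isometric isomorphism on $L^2(\mu_0)$, while Proposition~\ref{l10} shows that its restriction is an isometric isomorphism between $H(k_j)$ and $H(\ell_j)$. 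Since $c_\ast=\prod_j c_j$ and $\sum_j\tau_j<\infty$ (because $\tau_j\asymp\sigma_j^2$ by \eqref{sigma_squared}), the product $\prod_j q_j h_{\nu_j}$ converges on $\X$, and comparing with the explicit formula for $Q$ I would first record that $Q$ agrees with $\bigotimes_j q_j$ on the basis functions $h_\bn=\prod_j h_{\nu_j}$, i.e.\ $Qh_\bn=\prod_j q_j h_{\nu_j}$ for $\bn\in\bN$.

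First I would prove the $L^2(\mu)$ statement. By Theorem~\ref{ta2}, $L^2(\mu)$ is the incomplete tensor product of the factors $L^2(\mu_0)$ based on the constant reference vectors $1$, with orthonormal basis $\{h_\bn\}_{\bn\in\bN}$. The key point is that $\bigotimes_j q_j$ is a well-defined isometric isomorphism of incomplete tensor products: since each $q_j$ is an $L^2(\mu_0)$-isometry, it suffices to verify that the reference sequence $(q_j 1)_j$ is equivalent to $(1)_j$, i.e.\ that $\sum_j|1-\scp{q_j 1}{1}_{L^2(\mu_0)}|<\infty$. A direct Gaussian integral gives $\scp{q_j 1}{1}_{L^2(\mu_0)}=(2c_j/(c_j^2+1))^{1/2}=1-O((c_j-1)^2)$, so the series converges by \eqref{g69}, which is equivalent to \eqref{sigma_squared}. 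Hence $Q=\bigotimes_j q_j$ is an isometric isomorphism on $L^2(\mu)$. Equivalently, one may argue by the change of variables $\by=t(\bx)$: the function $\phi$ is precisely the Radon--Nikodym density of the image measure $t\mu=\bigotimes_j N(0,c_j^2)$ with respect to $\mu$, these two Gaussian product measures being mutually absolutely continuous under \eqref{sigma_squared}, and then $\|Qf\|_{L^2(\mu)}=\|f\|_{L^2(\mu)}$ is a one-line computation, with $Q^{-1}$ given by the analogous operator built from $t^{-1}$ and $c_j^{-1}$.

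For the RKHS statement I would combine the $L^2$-isometry from the first part with Proposition~\ref{l24}. From part one, $\{Qh_\bn\}_{\bn\in\bN}$ is an orthonormal basis of $L^2(\mu)$, and, factorizing over $j$ and inserting the single-variable representation \eqref{rep_ell_sigma} of $\ell_j$ in each coordinate (the interchange of the sum over $\bn$ with the product over $j$ being justified exactly as in the proof of Lemma~\ref{l77}, via \citet[Lem.~B.1]{GHHRW2020}), I obtain the kernel identity
\[
L_\X(\bx,\by)=\sum_{\bn\in\bN}\alpha_\bn^{-1}\cdot Qh_\bn(\bx)\cdot Qh_\bn(\by),\qquad \bx,\by\in\X,
\]
with $\alpha_\bn=\prod_j\alpha_{\nu_j,j}$. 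Since $\inf_\bn\alpha_\bn>0$ and $\sum_\bn\alpha_\bn^{-1}|Qh_\bn(\bx)|^2=L_\X(\bx,\bx)=1<\infty$ for $\bx\in\X$ by Lemma~\ref{l77} and Lemma~\ref{lemma:5.7}, Proposition~\ref{l24} (applied with the orthonormal basis $\{Qh_\bn\}$ and Fourier weights $\alpha_\bn$) characterizes $H(L_\X)$ as the set of $g\in L^2(\mu)$ with $\sum_\bn\alpha_\bn|\scp{g}{Qh_\bn}_{L^2(\mu)}|^2<\infty$, with the corresponding norm; the same proposition applied with $\{h_\bn\}$ (i.e.\ Lemma~\ref{l77}) characterizes $H(K)$. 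Because $Q$ is an $L^2(\mu)$-isometry, $\scp{Qf}{Qh_\bn}_{L^2(\mu)}=\scp{f}{h_\bn}_{L^2(\mu)}$, so $Qf\in H(L_\X)$ if and only if $f\in H(K)$, with equality of norms; surjectivity onto $H(L_\X)$ follows since $Q$ is onto $L^2(\mu)$.

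I expect the main obstacle to be the first, genuinely infinite-dimensional, step: making precise that $Q$ is an isometric isomorphism of $L^2(\mu)$ rather than merely a formal coordinatewise map. Concretely this is the equivalence of the reference unit-vector sequences in the incomplete tensor product (equivalently, the mutual absolute continuity of the Gaussian product measures $\bigotimes_j N(0,c_j^2)$ and $\mu$), which is exactly where the hypothesis \eqref{sigma_squared}/\eqref{g69} enters. Once this is settled, the kernel factorization and the application of Proposition~\ref{l24} are routine, and the identification $Q|_{H(K)}\colon H(K)\to H(L_\X)$ follows formally.
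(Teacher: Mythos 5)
Your proposal is correct. The first half coincides with the paper's own argument: there, too, $Q$ is realized on $L^2(\mu)$ as $\bigotimes_{j\in\N} q_j$ acting between incomplete tensor products of copies of $L^2(\mu_0)$ with reference vectors $1$ and $w_j:=q_j1$, and the whole point is the equivalence of these reference sequences via \eqref{g62}; the paper bounds $\sum_j|\scp{w_j}{1}_{L^2(\mu_0)}-1|$ by $\sum_j(1-c_j^{-1/2})$ instead of evaluating the Gaussian integral exactly, but both estimates rest on \eqref{g69}. (Your alternative via the density of $t\mu$ with respect to $\mu$ is also viable, though it quietly invokes the Kakutani-type equivalence of the two Gaussian product measures, which is the same summability condition in disguise.) For the RKHS half you genuinely diverge: the paper repeats the tensorization template, forming $G^{(\widehat\bw)}$ with $\widehat w_j=q_j\alpha_{0,j}^{-1/2}$, showing that $\bigotimes_j q_j|_{H(k_j)}$ is an isometric isomorphism onto $G^{(\widehat\bw)}$ by Proposition~\ref{l10}, proving $G^{(\widehat\bw)}=G^{(\bv)}$ from $\scp{\widehat w_j}{v_j}_{H(\ell_j)}=\widehat w_j(0)$, and transporting everything through Theorem~\ref{ta1}; you instead stay inside $L^2(\mu)$, expand $L_\X$ in the transformed basis $\{Qh_\bn\}$ with the same Fourier weights $\alpha_\bn$, and apply Proposition~\ref{l24} twice. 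This is a legitimate and arguably cleaner route once part one is in place --- it is the infinite-variate analogue of the proof of Proposition~\ref{l10} and avoids a second round of incomplete-tensor-product bookkeeping --- at the cost of having to establish the kernel identity $L_\X(\bx,\by)=\sum_{\bn\in\bN}\alpha_\bn^{-1}\cdot Qh_\bn(\bx)\cdot Qh_\bn(\by)$ with absolute convergence, i.e.\ the interchange of the sum over $\bn$ with the product over $j$, which does require the argument you cite (using $\sum_{\nu\in\N_0}\alpha_{\nu,j}^{-1}|q_jh_\nu(x)|^2=\ell_j(x,x)=1$ from \eqref{rep_ell_sigma}, as in Lemma~\ref{l48}). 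Two cosmetic points: your citation of Lemma~\ref{l77} for $\sum_\bn\alpha_\bn^{-1}|Qh_\bn(\bx)|^2<\infty$ is not literally applicable (that lemma concerns $h_\bn$ and $K$) and should be replaced by the computation just indicated; and, exactly as in the proof of Proposition~\ref{l10}, you should record that the explicit formula for $Q$ preserves the absolute pointwise convergence of $f=\sum_\bn\scp{f}{h_\bn}_{L^2(\mu)}\cdot h_\bn$ on $\X$ (using $t(\X)\subseteq\X$), so that the pointwise-defined $Qf$ agrees with the representative produced by Proposition~\ref{l24}. Neither point is a gap in substance.
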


\begin{proof}
At first we aim at the space $L^2(\mu)$ and the mapping $Q$. Let 
\[
L^{(\be)} := \bigotimes_{j \in \N} (L^2(\mu_0))^{(1)}
\qquad \text{and} \qquad
L^{(\bw)} := \bigotimes_{j \in \N} (L^2(\mu_0))^{(w_j)}
\]
denote the incomplete tensor products of the spaces $L^2(\mu_0)$
based on the constant function $1$ as the unit vector
for every $j \in \N$ and based on the unit vectors
\[
w_j := q_j 1
\]
for $j \in \N$, respectively.
We use Lemma~\ref{l200} and Lemma~\ref{l201}
to conclude that the tensor product
\[
q := \bigotimes_{j\in\N} q_j \colon L^{(\be)} \to L^{(\bw)}
\]
of the operators $q_j \colon L^2(\mu_0) \to L^2(\mu_0)$ is well-defined 
and an isometric isomorphism.

By definition $w_j (x) = c_j^{1/2} \cdot \exp(-(c_j^2-1)/ 4 \cdot x^2)$.  
We use 
$0 < w_j \leq c_j^{1/2}$ and $\|w_j\|_{L^2(\mu_0)}=1$ to obtain
$\scp{w_j}{1}_{L^2(\mu_0)} = \|w_j\|_{L^1(\mu_0)}$ and
\[
1 \geq \|w_j\|_{L^1(\mu_0)}  
\geq
c_j^{-1/2} \cdot \int_{\R} w_j^2 \, d \mu_0 = c_j^{-1/2}.
\] 
Therefore
\begin{equation}\label{g67}
\sum_{j \in \N} |\scp{w_j}{1}_{L^2(\mu_0)} - 1| 
\leq
\sum_{j \in \N} (1 - c_j^{-1/2}) < \infty,
\end{equation}
cf.\ \eqref{g69}.
The estimate \eqref{g67} together with \eqref{g62} implies
\[
L^{(\bw)} = L^{(\be)}.
\]

Let $\Psi \colon L^2(\mu) \to L^{(\be)}$ denote the isometric isomorphism 
according to Theorem~\ref{ta2} and
\[
\widetilde{Q} := 
\Psi^{-1} \circ q \circ \Psi \colon L^2(\mu) \to L^2(\mu).
\]
We claim that $\widetilde{Q} = Q$.
Recall
that the functions $h_\bn$ with $\bn \in \bN$ form an
orthonormal basis of $L^2(\mu)$. 
By definition of $q$ and $h_\bn$,
\[
(q \circ \Psi) h_\bn = q \bigotimes_{j \in \N} h_{\nu_j} =
\bigotimes_{j \in \N} q_j h_{\nu_j}.
\]
We combine \eqref{g67} and Theorem~\ref{ta2} to obtain
\[
\widetilde{Q} h_\bn (\bx) = 
\prod_{j=1}^\infty q_j h_{\nu_j} (x_j) = Q h_\bn (\bx)
\]
for $\rho$-a.e.\ $\bx \in \X$.
For $f \in L^2(\mu)$ we have
\[
\widetilde{Q} f = 
\sum_{\bn \in \bN} \scp{f}{h_\bn}_{L^2(\mu)} \cdot
\widetilde{Q} h_\bn
=
\sum_{\bn \in \bN} \scp{f}{h_\bn}_{L^2(\mu)} \cdot Q h_\bn
\]
with convergence in $L^2(\mu)$. Observe that $Q$ preserves pointwise 
convergence. Redefining the functions $h_\bn$ on sets of measure zero we
also have 
\[
Q f = \sum_{\bn \in \bN} \scp{f}{h_\bn}_{L^2(\mu)} \cdot Q h_\bn
\]
with pointwise convergence along a subsequence. It follows that
\[
Q f = \widetilde{Q} f \in L^2(\mu),
\]
as claimed, and therefore $Q$ is an isometric isomorphism on $L^2(\mu)$.

Proceeding analogously to the first part of the proof we now
aim at the spaces $H(K)$ and $H(L_\X)$ and the mapping
$Q|_{H(K)}$. Let 
\[
H^{(\ba_0^{-1/2})} := \bigotimes_{j \in \N} (H(k_j))^{(\alpha_{0,j}^{-1/2})}
\]
denote the incomplete tensor products of the spaces $H(k_j)$
based on the constant functions $\alpha_{0,j}^{-1/2}$ for $j \in \N$ as unit vectors.
Furthermore, let
\[
G^{(\bv)} := \bigotimes_{j \in \N} (H(\ell_j))^{(v_j)}
\qquad \text{and} \qquad
G^{(\widehat{\bw})} := \bigotimes_{j \in \N} (H(\ell_j))^{(\widehat{w}_j)}
\]
denote the incomplete tensor products of the spaces
$H(\ell_j)$ based on the vectors $v_j := \ell_j(\cdot,0)$
and
\[
\widehat{w}_j := q_j \alpha_{0,j}^{-1/2} = \alpha_{0,j}^{-1/2} w_j
\]
for $j \in \N$, respectively.
We use Lemma~\ref{l10} to conclude that the tensor product
\[
\widehat{q} := \bigotimes_{j\in\N} q_j|_{H(k_j)} \colon H^{(\ba_0^{-1/2})} \to G^{(\widehat{\bw})}
\]
of the operators $q_j|_{H(k_j)} \colon H(k_j) \to H(\ell_j)$ is
well-defined and an isometric isomorphism.

Since
\[
\scp{\widehat{w}_j}{v_j}_{H(\ell_j)} 
= \widehat{w}_j(0) = \alpha_{0,j}^{-1/2} \cdot (q_j 1 )(0) =
(c_j/\alpha_{0,j})^{1/2},
\]
we obtain
\begin{equation}\label{g68}
\sum_{j \in \N} |\scp{\widehat{w}_j}{v_j}_{H(\ell_j)} - 1|
\leq
\sum_{j \in \N} |(c_j^{1/2}-1) / \alpha_{0,j}^{1/2}| +
\sum_{j \in \N} |\alpha_{0,j}^{-1/2} - 1| < \infty,
\end{equation}
cf.\ \eqref{g69} and (\hb).
The estimate \eqref{g68} together with \eqref{g62} implies
\[
G^{(\widehat{\bw})} = G^{(\bv)}.
\]

Let $\Phi_1 \colon H^{(\ba_0^{-1/2})} \to H(K)$ and
$\Phi_2 \colon G^{(\bv)} \to H(L_\X)$ 
denote the isometric isomorphisms
according to Theorem~\ref{ta1} and 
\[
\widehat{Q}
:= \Phi_2 \circ \widehat{q} \circ \Phi_1^{-1} \colon H(K) \to H(L_\X).
\]
We claim that $\widehat{Q} = Q|_{H(K)}$.
Recall 
that the functions $\alpha_\bn^{-1/2} h_\bn$ with $\bn \in \bN$ 
form an orthonormal basis of $H(K)$. 
By definition of $\widehat{q}$ and $h_\bn$,
\[
(\widehat{q} \circ \Phi_1^{-1}) h_\bn = \widehat{q} \bigotimes_{j \in \N} h_{\nu_j} =
\bigotimes_{j \in \N} q_j h_{\nu_j}.
\]
We combine \eqref{g68} and Theorem~\ref{ta1} to obtain
\[
\widehat{Q} h_\bn (\bx) = 
\prod_{j=1}^\infty q_j h_{\nu_j} (x_j) = Q h_\bn (\bx)
\]
for every $\bx \in \X$.
For $f \in H(K)$ and $\bx \in \X$ this implies 
\begin{align*}
\widehat{Q} f (\bx) &= 
\sum_{\bn \in \bN} \alpha_\bn^{-1} \cdot 
\scp{f}{h_\bn}_{H(K)} \cdot \widehat{Q} h_\bn (\bx)
=
\sum_{\bn \in \bN} \alpha_\bn^{-1} \cdot 
\scp{f}{h_\bn}_{H(K)} \cdot Q h_\bn (\bx) \\
&= Qf (\bx),
\end{align*}
which completes the proof.
\end{proof}

\begin{rem}\label{r23}
According to Proposition~\ref{l24} and Theorem~\ref{t1},
\begin{center}
\begin{tikzcd}
H(K)\arrow[r,hook]\arrow[d,"Q|_{H(K)}"]
&
L^2(\mu) 
\arrow[d,"Q"]
\\
H(L_\X)\arrow[r,hook]
&
L^2(\mu)
\end{tikzcd}
\end{center}
is a commutative diagram of bounded linear operators with isometric
isomorphisms in the vertical direction.

Another important feature of $Q$ 
is the obvious fact that for every $\bx \in \X$ a single function value 
of $f \in H(K)$ suffices to compute $Qf(\bx)$ and vice versa.

These results allow to transfer computational problems from the  
space $H(L_\X)$ onto the space $H(K)$, as it will be shown in a
forthcoming paper, cf.\ Remark~\ref{r21}. In this approach it is
preferable to replace the Fourier weight $\alpha_{0,j}$ by one in the 
definition of each of the kernels $k_j$; the corresponding tensor product 
kernel $K^\prime$ has already been considered in Section~\ref{s3.2}.
We obtain $H(K) = H(K^\prime)$ as vector spaces and, as a minor change, 
that $Q|_{H(K^\prime)}$ is an isomorphism between $H(K^\prime)$
and $H(L_\X)$ with norm $\prod_{j=1}^\infty (1-\beta_j)^{-1/2}$;
its inverse has norm one. See Lemma~\ref{l43}.
\end{rem}

\begin{rem}
There is
a one-to-one correspondence between the scale of Hermite
spaces $H(K)$ appearing in Theorem~\ref{t1} and the scale of Hermite 
spaces with Fourier weights of exponential growth (EG) with $b_j = 1$ 
for all $j \in \N$. Indeed, the change of weights 
 $(1-\beta_j)^{-1} \beta_j^{-\nu} \mapsto \beta_j^{-\nu}$ for
$\nu\in\N_0$ and $j\in \N$
leaves the vector spaces $H(K)$ invariant and leads to different, but 
equivalent norms, cf.\ the proof of Lemma~\ref{lemma:5.7}.
It is easy to see that the mapping between the scales 
of Hermite spaces induced by this change of weights is well-defined and 
bijective.
\end{rem}

\appendix

\section{Countable Tensor Products of Hilbert Spaces}\label{a1}

Tensor products of arbitrary families of Hilbert spaces have been
introduced (actually named complete direct products) and thoroughly 
studied in \citet{Neu39}. In the present paper we are interested in 
countably infinite tensor products, and thus we consider a sequence 
$(H_j)_{j \in \N}$ of Hilbert spaces $H_j \neq \{0\}$ over the same 
scalar field $\K \in \{\R,\C\}$. 

As shown in Sections~\ref{as2} and \ref{as3}, the main results from 
\citet{Neu39} on complete and incomplete tensor products of Hilbert 
spaces can be represented within the framework of reproducing kernel 
Hilbert spaces in a concise way.
Tensor products of bounded linear operators are discussed in
Section~\ref{as6}.

In Sections~\ref{as4} and \ref{as5} we consider particular cases,
where the results are due to \citet{Rue20} and \citet{Gui69}, 
respectively.
In Sections~\ref{as4}
we study tensor products of reproducing kernel Hilbert spaces
and their relation to Hilbert spaces with
reproducing kernels of tensor product form. 
In Sections~\ref{as5}
we consider tensor products of $L^2$-spaces and their
relation to $L^2$-spaces with respect to the corresponding
product measures.

\subsection{Convergence of Products}\label{as1}

We discuss the convergence of infinite products in $\K$,
see \citet[Chap.~2]{Neu39} and cf.\ \eqref{g29}.
Let $(z_j)_{j \in \N}$ denote a sequence in $\K$.
We say that the product $\prod_{j \in \N} z_j$ is 
\emph{convergent} with value 
$z \in \K$, if the following holds for every $\eps >0$. 
There exists a finite set $I_0 \subseteq \N$ such that
$|\prod_{j \in I} z_j - z| \leq \eps$ holds
for every finite set $I_0 \subseteq I \subseteq \N$. The value
of a convergent product is uniquely determined; hence we 
put $\prod_{j \in \N} z_j := z$, given convergence with value $z$.
In the case of convergence we obviously also have convergence of
$\bigl(\prod_{j=1}^J z_j\bigr)_{J \in \N}$, as required in
\eqref{g29}, and 
$\prod_{j \in \N} z_j = \prod_{j=1}^\infty z_j$.

Moreover, we say that $\prod_{j \in \N} z_j$ is 
\emph{quasi-convergent}, if
$\prod_{j \in \N} |z_j|$ converges. The latter is necessary, but
not sufficient for convergence of $\prod_{j \in \N} z_j$,
and for non-convergent, but quasi-convergent products we
put $\prod_{j \in \N} z_j := 0$. The following properties
are equivalent:
\begin{itemize}
\item[(i)] 
$\prod_{j \in \N} z_j$ is convergent and $\prod_{j \in \N} z_j \neq 0$,
\item[(ii)]
$\prod_{j \in \N} z_j$ is quasi-convergent and 
$\prod_{j \in \N} z_j \neq 0$,
\item[(iii)]
$\sum_{j \in \N} |z_j-1| < \infty$ and $z_j \neq 0$ for every $j
\in \N$.
\end{itemize}
Furthermore, $\sum_{j \in \N} |z_j-1| < \infty$ implies
$\lim_{J \to \infty} \prod_{j=J}^\infty z_j = 1$.
If we have $z_j \in \R$ with $z_j \geq 1$ for
every $j \in \N$, then the convergence of $\prod_{j \in \N} z_j$
is obviously equivalent to the convergence of 
$\bigl(\prod_{j=1}^J z_j\bigr)_{J \in \N}$.

\subsection{The Complete Tensor Product}\label{as2}

In the sequel we use the notation $\bof = (f_j)_{j \in \N}$
with $f_j \in H_j$ for elements $\bof \in \times_{j \in \N} H_j$. 
For the construction of the complete tensor product of
the spaces $H_j$ we define
\[
C := \Bigl\{ \bof \in \times_{j \in \N} H_j \colon
\text{$\prod_{j \in \N} \|f_j\|_{H_j}$ converges}\Bigr\}.
\]
For $\bof, \bog \in C$ the product
$\prod_{j \in \N} \scp{f_j}{g_j}_{H_j}$ is quasi-convergent,
see \citet[Lem.~ 2.5.2]{Neu39}. Hence we may define a mapping
$\KK \colon C \times C \to \K$ by
\[
\phantom{\qquad\quad \bof,\bog \in C,}
\KK(\bog,\bof) :=
\prod_{j \in \N} \scp{f_j}{g_j}_{H_j},
\qquad\quad \bof,\bog \in C,
\]
and \citet[Lem.~3.4.1]{Neu39} implies that $\KK$ is a reproducing kernel.

Cf.\ \citet[Def.~3.5.1]{Neu39} for the following definition.

\begin{defi}
The \emph{complete tensor product} of the spaces $H_j$ is the 
Hilbert space 
\[
H := \bigotimes_{j \in \N} H_j := H(\KK)
\]
with reproducing kernel $\KK$.
\end{defi}

For
any $\bof \in C$ 
the function 
\[
\bigotimes_{j \in \N} f_j := \KK(\cdot,\bof) \in H
\]
is called an \emph{elementary tensor}. Its norm is 
$\| \bigotimes_{j \in \N} f_j \|_H = \prod_{j \in \N} \|f_j\|_{H_j}$, and
the span of elementary tensors is dense in $H$.
 
It is desirable to work on smaller domains $\widetilde{C} \subsetneq C$
consisting of sequences that are easier to handle. If the restriction 
mapping $f\mapsto f|_{\widetilde{C}}$ defined on $H(\KK)$ is injective, 
it is actually already an isometric isomorphism onto 
$H(\KK|_{\widetilde{C} \times \widetilde{C}})$.
Hence we may identify in a natural way the reproducing kernel Hilbert 
spaces $H=H(\KK)$ and $H(\KK|_{\widetilde{C} \times \widetilde{C}})$. 

In this sense, it suffices to consider the elements of $H$
on the domain
\[
C_0 := \Bigl\{ \bof \in \times_{j \in \N} H_j \colon
\sum_{j \in \N} \left| \|f_j\|_{H_j}-1 \right| < \infty \Bigr\}
\subsetneq C.
\]
Indeed, for $\bof \in C$ we have 
$\KK(\bof,\bof) \neq 0$ if and only if $\bof \in C_0$ and
$f_j \neq 0$ for every $j \in \N$. In particular, 
$\KK(\cdot,\bof) = 0$ for $\bof \not \in C_0$, so that 
all elements $g \in H$ vanish on $C \setminus C_0$.

Consider $\bof \in C$ and a sequence $(z_j)_{j \in \N}$ in $\K$ with a
convergent product. For $z := \prod_{j \in \N} z_j$ and 
$\bof^\prime$ given by $f^\prime_j := z_j \cdot f_j$ we have
$\bof^\prime \in C$ and
\[
z \cdot \KK(\cdot,\bof) = \KK(\cdot,\bof^\prime),
\]
see \citet[Lem.~3.3.6]{Neu39}.
This property allows to further reduce the domain, namely  
to consider the elements of $H$ only on
\[
V := \{ \bof \in \times_{j \in \N} H_j \colon 
\text{$\|f_j\|_{H_j} = 1$ for every $j \in \N$}\}
\subsetneq C_0.
\]
Indeed, for $\bof \in C$ with $\KK(\cdot,\bof) \neq 0$ and 
$\bof^\prime \in V$ given by
\begin{equation}\label{g40}
f^\prime_j := \|f_j\|^{-1}_{H_j} \cdot f_j
\end{equation} 
we obtain
\[
\KK(\cdot,\bof) = 
\prod_{j \in \N} \|f_j\|_{H_j} \cdot \KK(\cdot,\bof^\prime).
\]

\subsection{The Incomplete Tensor Product}\label{as3}

Incomplete tensor products are subspaces of $H$ and constructed in the 
following way. The property
\[
\sum_{j \in \N} \left| \scp{f_j}{g_j}_{H_j}-1 \right| < \infty
\]
for $\bof, \bog \in C_0$ defines an equivalence relation on $C_0$,
see \citet[Lem.~3.3.3]{Neu39}. Observe that $\bog \in C_0$ has 
at most a finite number of components $g_j = 0$. If we replace
all of them by non-zero elements of the corresponding
Hilbert spaces $H_j$, we obtain $\bof \in C_0$, equivalent to
$\bog$, with $\KK(\cdot,\bof) \neq 0$.
For $\bof \in C_0$ with $\KK(\cdot,\bof) \neq 0$
and $\bof^\prime \in V$ given by \eqref{g40}
we have the equivalence of $\bof$ and $\bof^\prime$.
For a sequence $\bv \in V$ of unit vectors $v_j \in H_j$ 
we consider its equivalence class 
\[
\Co^{(\bv)} := \Bigl\{ \bof \in C_0 \colon
\sum_{j \in \N} \left| \scp{v_j}{f_j}_{H_j}-1 \right| < \infty \Bigr\}.
\]
Observe that $\{\Co^{(\bv)} \colon \bv \in V\}$ is a partition 
of the set $C_0$.
Cf.\ \citet[Def.~4.1.1]{Neu39} for the following definition.

\begin{defi}\label{def:incprod}
The \emph{incomplete tensor product}
of the spaces $H_j$ based on $\bv \in V$
is the closed linear subspace
\[
H^{(\bv)} := 
\bigotimes_{j \in \N} H_j^{(v_j)} :=
\overline{\spann} \{\KK(\cdot,\bof) \colon \bof \in
\Co^{(\bv)}\}
\]
of $H$, equipped with the induced norm.
\end{defi} 

It follows that for every $\bw \in V$
\begin{equation}\label{g62}
\bw \in \Co^{(\bv)} \quad \Rightarrow \quad
H^{(\bv)} =H^{(\bw)}
\end{equation}
and
\[
\bw \not\in \Co^{(\bv)} \quad \Rightarrow \quad
H^{(\bv)} \perp H^{(\bw)}.
\]
The latter orthogonality property follows from
$\KK(\bog,\bof) = 0$ for all 
non-equivalent $\bof,\bog \in C_0$.
Consequently, it suffices to consider the elements of $H^{(\bv)}$ on
the domain $\Co^{(\bv)}$, since every $g \in H^{(\bv)}$
vanishes on $C \setminus \Co^{(\bv)}$.

The observations above yield the following representation of $H$ as an 
orthogonal sum of Hilbert spaces: If $\mathcal{R} \subseteq V$ is a 
system of representers of the equivalence classes 
$\Co^{(\bv)}$, $\bv \in V$, then 
\[
H = \bigoplus_{\bv \in \mathcal{R}} H^{(\bv)}.
\]

The following approximation is employed in the proof of
\citet[Lem.~4.1.2]{Neu39}. Because of its particular relevance
for this paper we present the result together with a proof.

\begin{lemma}\label{la6}
Let $\bof \in \Co^{(\bv)}$. For $\bof_J \in \Co^{(\bv)}$ 
given by
\[
f_{J,j} :=
\begin{cases}
f_j, & \text{if $j < J$,}\\
v_j, & \text{otherwise,}
\end{cases}
\]
we have
\[
\lim_{J \to \infty} \|\KK(\cdot,\bof) - \KK(\cdot,\bof_J) \|_{H} = 0.
\]
\end{lemma}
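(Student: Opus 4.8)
The plan is to reduce the assertion to the limiting behavior of infinite products of scalar inner products, using the reproducing property of $\KK$ together with its explicit tensor-product form. First I would express the squared norm of the difference by means of the reproducing identity $\scp{\KK(\cdot,\bof)}{\KK(\cdot,\bog)}_H = \KK(\bog,\bof)$, which upon expansion gives
\[
\|\KK(\cdot,\bof) - \KK(\cdot,\bof_J)\|_H^2 = \KK(\bof,\bof) - \KK(\bof_J,\bof) - \KK(\bof,\bof_J) + \KK(\bof_J,\bof_J).
\]
Each term is an infinite product over $j \in \N$ of inner products $\scp{\,\cdot\,}{\,\cdot\,}_{H_j}$. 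The diagonal term $\KK(\bof,\bof) = \prod_{j\in\N}\|f_j\|_{H_j}^2 =: A$ does not depend on $J$, whereas the remaining three, split at index $J$, share the common partial product $\prod_{j<J}\|f_j\|_{H_j}^2$, since $f_{J,j}=f_j$ for $j<J$. Because $f_{J,j}=v_j$ and $\|v_j\|_{H_j}=1$ for $j\geq J$, the corresponding tail factors are $1$ for $\KK(\bof_J,\bof_J)$, the product $\prod_{j\geq J}\scp{f_j}{v_j}_{H_j}$ for $\KK(\bof_J,\bof)$, and $\prod_{j\geq J}\scp{v_j}{f_j}_{H_j}$ for $\KK(\bof,\bof_J)$.

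Next I would pass to the limit $J\to\infty$. The partial products $\prod_{j<J}\|f_j\|_{H_j}^2$ converge to $A$, because $\bof\in C_0$ yields $\sum_j\bigl|\,\|f_j\|_{H_j}-1\,\bigr|<\infty$ and hence convergence of $\prod_j\|f_j\|_{H_j}$ by the facts collected in Section~\ref{as1}. For the two mixed tail products I would invoke membership $\bof\in\Co^{(\bv)}$, that is, $\sum_j|\scp{v_j}{f_j}_{H_j}-1|<\infty$; the implication ``$\sum_j|z_j-1|<\infty$ implies $\lim_{J\to\infty}\prod_{j=J}^\infty z_j = 1$'' recorded in Section~\ref{as1} then shows that both $\prod_{j\geq J}\scp{v_j}{f_j}_{H_j}$ and $\prod_{j\geq J}\scp{f_j}{v_j}_{H_j}$ tend to $1$, the latter since $|\overline{z}-1|=|z-1|$ covers the complex case. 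Consequently all four terms converge to $A$, and the alternating signs give $A-A-A+A=0$, which is the claim.

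The computation is essentially routine; the only point demanding care is the tail behavior of the mixed products $\prod_{j\geq J}\scp{v_j}{f_j}_{H_j}$, and this is precisely where the defining summability condition of the equivalence class $\Co^{(\bv)}$ is indispensable rather than mere membership in $C_0$: without $\sum_j|\scp{v_j}{f_j}_{H_j}-1|<\infty$ the tail products need not approach $1$, and the cross terms would fail to cancel the diagonal contribution in the limit. I would therefore make explicit that $\bof\in\Co^{(\bv)}$ is used exactly at this step, noting in passing that $\bof_J\in\Co^{(\bv)}$ as well, so that every kernel value above is legitimately evaluated at elements of the same equivalence class and all elementary tensors lie in $H$.
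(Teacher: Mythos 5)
Your proof is correct and follows essentially the same route as the paper: both expand $\|\KK(\cdot,\bof)-\KK(\cdot,\bof_J)\|_H^2$ via the reproducing identity into kernel values, split the resulting products at the index $J$, and use $\sum_j|\scp{v_j}{f_j}_{H_j}-1|<\infty$ to send the tail products to $1$. The only difference is organizational: the paper first treats $\bof\in V$, then reduces the case $\KK(\cdot,\bof)\neq 0$ to it by normalizing, and disposes of $\KK(\cdot,\bof)=0$ separately, whereas your single unified computation handles all three cases at once.
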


\begin{proof}
Let $\bof \in \Co^{(\bv)}$. Assume at first that $\bof \in V$. 
Then we obtain
\[
1/2 \cdot \|\KK(\cdot,\bof) - \KK(\cdot,\bof_J) \|^2_{H}
= 1- \Re \prod_{j \geq J} \scp{v_j}{f_j}_{H_j}.
\]
This yields the convergence as claimed.

Assume now that $\KK(\cdot,\bof) \neq 0$.
Then we put $z_j := \|f_j\|_{H_j}$. For $\bof^\prime \in V$ according to
\eqref{g40} we already know that 
$\KK(\cdot,\bof_J^\prime)$ converges to $\KK(\cdot,\bof^\prime)$ in $H$.
Furthermore,
\[
\prod_{j \in \N} z_j \cdot
\|\KK(\cdot,\bof^\prime) - \KK(\cdot,\bof^\prime_J)\|_H =
\Bigl\|\KK(\cdot,\bof) - \prod_{j \geq J} z_j \cdot
\KK(\cdot,\bof_J)\Bigr\|_H
\]
and
\[
\Bigl\|\KK(\cdot,\bof_J) - \prod_{j \geq J} z_j \cdot
\KK(\cdot,\bof_J)\Bigr\|^2_H = \Bigl|1 - \prod_{j=J}^\infty z_j\Bigr|^2 
\cdot \prod_{j=1}^{J-1} z_j^2.
\]
This yields the convergence as claimed, since
$\sum_{j \in \N} |z_j-1| < \infty$ also in this case.

It remains to observe that $\KK(\cdot,\bof_J) = 0$ for $J$
sufficiently large, if $\KK(\cdot,\bof)=0$.
\end{proof}

Subsequently, we discuss further properties of incomplete tensor 
products, which may be derived with the help of Lemma~\ref{la6} 
in a straightforward way. 

First of all,
\[
H^{(\bv)} = 
\overline{\spann} \{\KK(\cdot,\bof) \colon \bof \in \Coo^{(\bv)}\},
\]
where
\[
\Coo^{(\bv)} := \{ \bof \in C_0 \colon 
\text{$\{j \in \N \colon f_j \neq v_j\}$ is finite}\} \subsetneq
\Co^{(\bv)}.
\]
Consequently, it suffices to consider the elements of $H^{(\bv)}$ on
the domain $\Coo^{(\bv)}$, since 
\begin{equation}\label{g47}
g(\bof) = \lim_{J \to \infty} g(\bof_J)
\end{equation}
for every $g \in H^{(\bv)}$ and every $\bof \in \Co^{(\bv)}$.

As a closed linear subspace of a reproducing kernel Hilbert space,
$H^{(\bv)}$ is a reproducing kernel Hilbert space, too.  
Moreover, the reproducing kernel $\Ko^{(\bv)} \colon C \times
C \to \K$ of $H^{(\bv)}$ satisfies 
\[
\Ko^{(\bv)}(\bog,\bof)
=
\begin{cases}
\KK(\bog,\bof), & \text{if $\bof,\bog \in \Co^{(\bv)}$},\\
0, & \text{otherwise}.
\end{cases}
\]
Observe that $\KK(\bog,\bof) = \prod_{j \in \N}
\scp{f_j}{g_j}_{H_j}$ is convergent, and not just quasi-con\-ver\-gent,
for $\bof,\bog \in \Co^{(\bv)}$.

Instead of $\Ko^{(\bv)}$ and $H^{(\bv)}$ one may actually 
consider the kernel
\[
\Koo^{(\bv)} := \KK|_{\Coo^{(\bv)} \times \Coo^{(\bv)}}
\]
and the space $H(\Koo^{(\bv)})$, since the restriction 
$g \mapsto g|_{\Coo^{(\bv)}}$ defines an isometric isomorphism between
$H^{(\bv)}$ and $H(\Koo^{(\bv)})$. The corresponding inverse
extension map is essentially given by \eqref{g47}.

\begin{rem}\label{onb}
Consider any choice of orthonormal bases $(e_{\nu,j})_{\nu \in N_j}$ 
in each of the spaces $H_j$, and assume that $0 \in N_j$ for
notational convenience. Let $\mathcal{N}$ denote the set of all 
sequences $\bn := (\nu_j)_{j \in \N}$ with $\nu_j \in N_j$ for every 
$j \in \N$ and with $\{j \in \N \colon \nu_j \neq 0\}$ being finite.
If 
\[
e_{0,j}=v_j
\]
for every $j \in \N$, then the elementary tensors 
\begin{equation}\label{g45}
e_\bn := \bigotimes_{j \in \N} e_{\nu_j,j}
\end{equation}
with $\bn \in \mathcal{N}$
form an orthonormal basis of $H^{(\bv)}$.
\end{rem}

\subsection{Tensor Products of Operators}\label{as6}

We now consider two incomplete tensor products
\[
H^{(\bv)} := \bigotimes_{j \in \N} H_j^{(v_j)}
\quad \text{and} \quad
G^{(\bw)} := \bigotimes_{j \in \N} G_j^{(w_j)}
\] 
together with a sequence of bounded linear operators 
$T_j \colon H_j \to G_j.$
We assume that $T_j v_j = w_j$ for every $j \in \N$ and that 
$\prod_{j \in \N} \|T_j\|$ converges. Observe that $T_j v_j = w_j$
implies $\| T_j \| \ge 1$. 
Letting 
\[
 T \bigotimes_{j \in \N} f_j  = \bigotimes_{j \in \N} T_j f_j 
\] 
for elementary tensors $\bigotimes_{j \in \N} f_j$ with 
$\bof \in \Coo^{(\bv)}$ and extending this linearly defines a 
linear operator from the span 
$H_0^{(\bv)}$ of these elementary tensors to 
$G^{(\bw)}$. 
This follows similarly as the corresponding statement for finite 
algebraic tensor products, see, e.g., \citet[Sec.~4.3.6]{Hac12}.
It is readily checked that this operator is bounded, in fact it 
has norm 
\[
 \| T \| = \prod_{j \in \N} \|T_j\|.
\]
Again, this follows similarly as for finite Hilbert space tensor 
products, see, e.g., \citet[Prop.~4.127]{Hac12}.
Since 
$H_0^{(\bv)}$ is dense in $H^{(\bv)}$,
$T$ uniquely extends to a linear 
operator $T \colon H^{(\bv)} \to G^{(\bw)}$ with the same norm.
This operator 
is called the \emph{tensor product} of 
the operators $T_j$ and denoted by $\bigotimes_{j \in \N} T_j$.
If, additionally, each operator $T_j$ is an isomorphism and 
$\prod_{j \in \N} \|T_j^{-1}\|$
converges as well, then $T \colon H^{(\bv)} \to G^{(\bw)}$ is an 
isomorphism with inverse $T^{-1}=\bigotimes_{j \in \N} T_j^{-1}$.

\subsection{Incomplete Tensor Products of Reproducing Kernel
Hilbert Spaces} \label{as4}

Here we consider the particular case 
\[
\phantom{\qquad\quad j \in \N,}
H_j := H(k_j),
\qquad\quad j \in \N,
\]
with reproducing kernels $k_j \colon D_j \times D_j \to \K$.
For every $j \in \N$ we assume that there exists a point $x \in
D_j$ with $k_j(\cdot,x) \neq 0$, which is equivalent to $H_j \neq \{0\}$.
We put $D := \times_{j \in \N} D_j$.

To $\bx \in D$ we associate 
\[
\tau (\bx) := (k_j(\cdot,x_j))_{j \in \N} \in \times_{j \in \N} H_j, 
\]
and we put
\[
\X_0 := \Bigl\{\bx \in D \colon 
\sum_{j \in \N} |k_j(x_j,x_j)-1| < \infty\Bigr\}
\]
as well as
\[
\Xo^{(\bv)} := \Bigl\{\bx \in \X_0 \colon 
\sum_{j \in \N} |v_j(x_j)-1| < \infty\Bigr\}
\]
for $\bv \in V$. 
Let $\bx \in D$.
Observe that $\bx \in \X_0$ if and only if $\tau(\bx) \in C_0$.
Moreover, $\bx \in \Xo^{(\bv)}$ if and only if $\tau(\bx) \in 
\Co^{(\bv)}$.

The following result is due to \citet[Lem.~4.9]{Rue20}.

\begin{lemma}\label{la7}
If $\Xo^{(\bv)} \neq \emptyset$, then
\[
H^{(\bv)} = 
\overline{\spann} 
\{ \KK(\cdot,\tau(\bx)) \colon \bx \in \Xo^{(\bv)}\}.
\]
\end{lemma}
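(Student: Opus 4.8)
The plan is to prove the two inclusions separately; writing $R := \overline{\spann}\{\KK(\cdot,\tau(\bx)) \colon \bx \in \Xo^{(\bv)}\}$ for the right-hand side, the easy inclusion is $R \subseteq H^{(\bv)}$. Indeed, for $\bx \in \Xo^{(\bv)}$ we have $\tau(\bx) \in \Co^{(\bv)}$, so each $\KK(\cdot,\tau(\bx))$ already appears among the generators of the closed subspace $H^{(\bv)}$. The nontrivial task is to show $H^{(\bv)} \subseteq R$.

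First I would replace the base point $\bv$ by a normalized kernel section. Since $\Xo^{(\bv)} \neq \emptyset$, fix $\bx^0 \in \Xo^{(\bv)}$. Because $\bx^0 \in \X_0$ forces $k_j(x_j^0,x_j^0) \to 1$, only finitely many sections $k_j(\cdot,x_j^0)$ can vanish, and these may be replaced by nonzero sections (which exist since $H_j \neq \{0\}$) without leaving $\Xo^{(\bv)}$, as altering finitely many coordinates does not affect the defining summability conditions. Thus I may assume $k_j(\cdot,x_j^0) \neq 0$ for all $j$ and set $w_j := \|k_j(\cdot,x_j^0)\|_{H_j}^{-1}\, k_j(\cdot,x_j^0)$. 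Then $\bw \in V$ lies in the equivalence class $\Co^{(\bv)}$, so \eqref{g62} gives $H^{(\bv)} = H^{(\bw)}$, and correspondingly $\Xo^{(\bv)} = \Xo^{(\bw)}$, so $R$ is unchanged.

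Working with the base point $\bw$, I would use the established representation $H^{(\bw)} = \overline{\spann}\{\KK(\cdot,\bof) \colon \bof \in \Coo^{(\bw)}\}$, so it suffices to approximate $\KK(\cdot,\bof)$ for an arbitrary $\bof \in \Coo^{(\bw)}$, say with $f_j = w_j$ for all $j \geq J_0$. In each factor $H_j = H(k_j)$ the kernel sections span a dense subspace, so for $j < J_0$ I approximate $f_j$ in $H_j$ by finite combinations $\sum_i c_{ij}\, k_j(\cdot,x_{ij})$. Keeping the tail fixed, the map $\bigotimes_{j<J_0} a_j \mapsto \bigotimes_{j<J_0} a_j \otimes \bigotimes_{j \geq J_0} w_j$ is isometric onto its image in $H^{(\bw)}$ (the image has norm $\prod_{j<J_0}\|a_j\|_{H_j}$, since each $\|w_j\|_{H_j}=1$), so these head approximations converge to $\KK(\cdot,\bof)$ in $H$. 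Expanding each approximant by multilinearity yields a finite linear combination of elementary tensors $\bigotimes_{j<J_0} k_j(\cdot,x_{ij}) \otimes \bigotimes_{j\geq J_0} w_j$. For each such tensor, pulling the scalars $\|k_j(\cdot,x_j^0)\|_{H_j}^{-1}$ out of the tail — legitimate because $\prod_{j\geq J_0}\|k_j(\cdot,x_j^0)\|_{H_j}$ converges, as $\bx^0 \in \X_0$ — and invoking the scalar rule $z\,\KK(\cdot,\bof) = \KK(\cdot,\bof^\prime)$ of \citet[Lem.~3.3.6]{Neu39} exhibits it as a scalar multiple of $\KK(\cdot,\tau(\bx^{(i)}))$, where $\bx^{(i)}$ has coordinates $x_{ij}$ for $j<J_0$ and $x_j^0$ otherwise. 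Since $\bx^{(i)}$ differs from $\bx^0 \in \Xo^{(\bv)}$ in only finitely many coordinates, $\bx^{(i)} \in \Xo^{(\bv)}$, so each approximant lies in $R$; passing to the limit gives $\KK(\cdot,\bof) \in R$, hence $H^{(\bv)} \subseteq R$.

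The main obstacle is the tail: the generators of $H^{(\bv)}$ carry the unit vectors $v_j$ in the tail, which need not be kernel sections, so one cannot directly approximate by tensors of the form $\tau(\bx)$. Switching the base point to the normalized reference section $\bw$ — available precisely because $\Xo^{(\bv)}$ is nonempty — is what makes the tail compatible with kernel sections and lets the finitely-many-coordinate modification argument, together with the convergence of $\prod_{j\geq J_0}\|k_j(\cdot,x_j^0)\|_{H_j}$, go through cleanly.
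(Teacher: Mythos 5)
Your proof is correct and takes essentially the same route as the paper's: both arguments replace the unit vectors by the normalized kernel sections $w_j = \|k_j(\cdot,x_j^0)\|_{H_j}^{-1}\,k_j(\cdot,x_j^0)$ at a fixed $\bx^0 \in \Xo^{(\bv)}$, reduce to elementary tensors whose tail equals $\bw$, and approximate the finitely many remaining factors by linear combinations of kernel sections. The differences are only presentational --- you invoke the density of $\{\KK(\cdot,\bof) \colon \bof \in \Coo^{(\bw)}\}$ directly instead of re-running the Lemma~\ref{la6} tail approximation, and you spell out the multilinear expansion and scalar extraction that the paper compresses into the final assertion $\zeta_J(h_{1,n},\dots,h_{J,n}) \in \HH$.
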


\begin{proof}
Put $\HH := \spann 
\{ \KK(\cdot,\tau(\bx)) \colon \bx \in \Xo^{(\bv)}\}$.
It suffices to prove the following result.
For every $\bof \in \Coo^{(\bv)} \cap v$ there exists a sequence
$(f_J)_{J \in \N}$ in $\HH$ such that
\[
\lim_{J \to \infty} \|\KK(\cdot,\bof) - f_J\|_H = 0.
\]

For a fixed choice of $\bx \in \Xo^{(\bv)}$ with 
$k_j(\cdot,x_j) \neq 0$ for every $j \in \N$ we define $\bw \in
\Co^{(\bv)} \cap v$ by
\[
w_j := \|k_j(\cdot,x_j)\|^{-1}_{H_j} \cdot k_j(\cdot,x_j).
\]
Since $\bw \in \Co^{(\bw)}$, we get $\Co^{(\bv)} =
\Co^{(\bw)}$.
Furthermore, we define $\bof_J \in \Coo^{(\bw)} \cap v$ for $J \in \N$ by
\[
f_{J,j} :=
\begin{cases}
f_j, & \text{if $j \leq J$,}\\
w_j, & \text{otherwise.}
\end{cases}
\]
Lemma~\ref{la6} yields
\[
\lim_{J \to \infty} \|\KK(\cdot,\bof) - \KK(\cdot,\bof_J) \|_{H} = 0.
\]

Fix $J \in \N$ and consider the norm given by
$\|(h_1,\dots,h_J)\| :=
\max_{j=1,\dots,J} \|h_j\|_{H_j}$ on $H_1 \times \dots \times H_J$.
By $\zeta_J(h_1,\dots,h_J) = \KK(\cdot,\bog)$ with
\[
g_j :=
\begin{cases}
h_j, & \text{if $j \leq J$,}\\
w_j, & \text{otherwise,}
\end{cases}
\]
we obtain a continuous mapping $\zeta_J$ from 
$H_1 \times \dots \times H_J$.
into $H$.
For every $j \leq J$ we choose a sequence $(h_{j,n})_{n \in \N}$
in $\spann \{ k_j(\cdot,x) \colon x \in D_j\}$
with $\lim_{n \to \infty} \|f_j - h_{j,n}\|_{H_j} = 0$.
It follows that
\[
\lim_{n \to \infty}
\| \zeta_J(h_{1,n},\dots,h_{J,n}) - \KK(\cdot,\bof_J)\|_H = 0.
\]
Finally,
$\zeta_J(h_{1,n}, \dots, h_{J,n}) \in \HH$, which completes the
proof.
\end{proof}

Assume that $\X_0 \neq \emptyset$. 
In this case
\[
\phantom{\qquad\quad \bx,\by \in \X_0,}
K(\bx,\by) := 
\KK(\tau(\bx),\tau(\by)) 
= \prod_{j \in \N} k_j(x_j,y_j),
\qquad\quad \bx,\by \in \X_0,
\]
yields a reproducing kernel $K \colon \X_0 \times \X_0 \to \K$
of tensor product form. 
The complete tensor product 
$H$ of the reproducing kernel Hilbert spaces $H(k_j)$
is a feature space and $\bx \mapsto
\KK(\cdot,\tau(\bx))$ with $\bx \in \X_0$ is a feature map of $K$.
Consequently,
\[
H(K) = \{ g \circ \tau|_{\X_0} \colon g \in H\}
\]
and
\[
\|f\|_{H(K)} = \min \{ \|g\|_H \colon g \circ \tau|_{\X_0} = f\}.
\]

Assume that $\Xo^{(\bv)} \neq \emptyset$. Then analogous results 
hold true for the tensor product kernel 
\[
K^{(\bv)} := K|_{\Xo^{(\bv)} \times \Xo^{(\bv)}}
\]
and the incomplete tensor product $H^{(\bv)}$ of the reproducing
kernel Hilbert spaces $H(k_j)$.

In the sense of the following result, 
which is due to \citet[Thm.~4.10]{Rue20},
$H^{(\bv)}$ is
the reproducing kernel Hilbert space with the tensor product kernel 
$K^{(\bv)}$.

\begin{theo}\label{ta1}
Let $\bv \in V$.
If $\Xo^{(\bv)} \neq \emptyset$, then
\[
\Phi \colon H^{(\bv)} \to \K^{\Xo^{(\bv)}},
\]
given by
\[
\phantom{\qquad\quad \bx \in \Xo^{(\bv)},}
\Phi g (\bx) := g(\tau(\bx)),
\qquad\quad \bx \in \Xo^{(\bv)},
\]
induces an isometric isomorphism between $H^{(\bv)}$ and $H(K^{(\bv)})$.
In particular, for $\bof \in \Co^{(\bv)}$ and $\bx \in \Xo^{(\bv)}$
the product $\prod_{j \in \N} f_j(x_j)$ converges and
\[
\Bigg(\Phi \bigotimes_{j \in \N} f_j \Bigg) (\bx) = \prod_{j \in \N} f_j(x_j).
\]
\end{theo}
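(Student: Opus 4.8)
The plan is to exploit the feature-space description of $H(K^{(\bv)})$ that has just been recorded, together with the density statement of Lemma~\ref{la7}. Since $\Xo^{(\bv)} \neq \emptyset$, the map $\bx \mapsto \KK(\cdot,\tau(\bx))$ is a feature map of $K^{(\bv)}$ with feature space $H^{(\bv)}$, so the induced map $\Phi g = g \circ \tau|_{\Xo^{(\bv)}}$ already sends $H^{(\bv)}$ \emph{onto} $H(K^{(\bv)})$, and the norm identity $\|f\|_{H(K^{(\bv)})} = \min\{\|g\|_{H^{(\bv)}} \colon \Phi g = f\}$ shows that $\Phi$ is norm-nonincreasing. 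Hence the entire isomorphism statement reduces to verifying that $\Phi$ is injective: once this is known, the minimum in the norm identity is attained only at $g$ itself, so $\|\Phi g\|_{H(K^{(\bv)})} = \|g\|_{H^{(\bv)}}$, and together with surjectivity $\Phi$ is an isometric isomorphism.

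To prove injectivity I would first rewrite $\Phi$ by means of the reproducing property of $H^{(\bv)}$. For $\bx \in \Xo^{(\bv)}$ we have $\tau(\bx) \in \Co^{(\bv)}$, so $\KK(\cdot,\tau(\bx))$ coincides with the reproducing kernel $\Ko^{(\bv)}(\cdot,\tau(\bx))$ of the subspace $H^{(\bv)}$ at the point $\tau(\bx)$; consequently $\Phi g(\bx) = g(\tau(\bx)) = \scp{g}{\KK(\cdot,\tau(\bx))}_{H^{(\bv)}}$. Thus $\Phi g = 0$ is equivalent to $g \perp \KK(\cdot,\tau(\bx))$ for every $\bx \in \Xo^{(\bv)}$, i.e.\ to $g$ being orthogonal to $\spann\{\KK(\cdot,\tau(\bx)) \colon \bx \in \Xo^{(\bv)}\}$. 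By Lemma~\ref{la7} this span is dense in $H^{(\bv)}$, so $g = 0$, which is the desired injectivity.

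For the final \emph{in particular} statement I would take $\bof \in \Co^{(\bv)}$, so that the elementary tensor is $\bigotimes_{j \in \N} f_j = \KK(\cdot,\bof)$, and evaluate $\Phi$ on it. Using $\tau(\bx) \in \Co^{(\bv)}$ one obtains $\bigl(\Phi \bigotimes_{j \in \N} f_j\bigr)(\bx) = \KK(\tau(\bx),\bof) = \prod_{j \in \N} \scp{f_j}{k_j(\cdot,x_j)}_{H_j} = \prod_{j \in \N} f_j(x_j)$, where the last equality is the reproducing property in each factor $H_j$. The convergence (and not merely quasi-convergence) of this product is exactly the convergence of $\KK(\bog,\bof)$ for $\bof,\bog \in \Co^{(\bv)}$ noted above, applied with $\bog = \tau(\bx)$. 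The only point requiring genuine care — and the main obstacle — is the identification of $\KK(\cdot,\tau(\bx))$ with the reproducing kernel of the \emph{subspace} $H^{(\bv)}$ rather than of the ambient space $H$; this is what allows the reproducing identity to descend to $H^{(\bv)}$, and it rests on the agreement of $\Ko^{(\bv)}$ with $\KK$ on $\Co^{(\bv)} \times \Co^{(\bv)}$ recorded earlier. Everything else is a direct assembly of the feature-space formulas with Lemma~\ref{la7}.
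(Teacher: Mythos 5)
Your proposal is correct and follows essentially the same route as the paper: the paper's proof also deduces injectivity of $\Phi$ from the density statement of Lemma~\ref{la7} (via orthogonality to the spanning elements $\KK(\cdot,\tau(\bx))$), concludes the isometric isomorphism from the feature-space description of $H(K^{(\bv)})$ recorded just before the theorem, and verifies the elementary-tensor formula by the same computation $\bigl(\Phi\bigotimes_{j\in\N}f_j\bigr)(\bx)=\scp{\KK(\cdot,\bof)}{\KK(\cdot,\tau(\bx))}_H=\prod_{j\in\N}f_j(x_j)$, with genuine convergence because $\bof,\tau(\bx)\in\Co^{(\bv)}$. You merely spell out the injectivity step and the subspace-kernel identification that the paper leaves implicit.
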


\begin{proof}
Due to Lemma~\ref{la7} the mapping $\Phi$ is injective,
and thus it induces an isometric isomorphism as claimed.
Furthermore, for $\bof \in \Co^{(\bv)}$ and $\bx \in \Xo^{(\bv)}$,
\[
\bigotimes_{j \in \N} f_j (\tau(\bx)) =
\scp{\KK(\cdot,\bof)}{\KK(\cdot,\tau(\bx)}_H = 
\prod_{j \in \N} f_j(x_j)
\]
with convergence as claimed, since $\tau(\bx) \in \Co^{(\bv)}$.
\end{proof}

\subsection{Incomplete Tensor Products of $L^2$-Spaces} \label{as5}

Here we consider the particular case
\[
\phantom{\qquad\quad j \in \N,}
H_j := L^2(\rho_j) := L^2(D_j,\rho_j),
\qquad\quad j \in \N,
\]
for a sequence $(\rho_j)_{j \in \N}$ of probability measures on 
$\sigma$-algebras on sets $D_j$. 
Let $\rho:=\times_{j \in \N} \rho_j$ denote the product measure on
the product $\sigma$-algebra on $D := \times_{j \in \N} D_j$.
Recall that we identify square-integrable
functions and the elements of the corresponding $L^2$-space. 
We now show 
that the incomplete tensor product $H^{(\be)}$ based on the 
constant function $1$ as the unit vector for every $j \in \N$
is, in a canonical way, isometrically isomorphic to $L^2(\rho) :=
L^2(D,\rho)$.

We introduce the following notation. 
For $\bof \in \Co^{(\be)}$ and $J \in \N$ we consider the 
function $\bx \mapsto \prod_{j=1}^J f_j(x_j)$, which belongs to 
$L^2(\rho)$. In the case of convergence in $L^2(\rho)$ as $J \to \infty$,
we use $\prod_{j=1}^\infty f_j$ to denote the limit; to indicate this 
convergence we say that $\prod_{j=1}^\infty f_j$ exists in $L^2(\rho)$.

See \citet[Corollary~6]{Gui69} for the following result.

\begin{theo}\label{ta2}
For every $\bof \in \Co^{(\be)}$ we have existence of
$\prod_{j=1}^\infty f_j$ in $L^2(\rho)$. Moreover,
the mapping $\Psi \colon L^2(\rho) \to \K^C$ given by
\[
\phantom{\qquad\quad \bog \in \Co^{(\be)},}
\Psi \gamma (\bog) := 
\int_D \gamma \cdot \prod_{j=1}^\infty \overline{g_j} \, d \rho,
\qquad\quad \bog \in \Co^{(\bu)},
\]
and $\Psi \gamma (\bog) := 0$ for
$\bog \in C \setminus \Co^{(\be)}$
defines an isometric isomorphism between $L^2(\rho)$ and $H^{(\be)}$.
In particular, 
\begin{equation}\label{g65}
\Psi^{-1} \bigotimes_{j \in \N} f_j = \prod_{j=1}^\infty f_j
\end{equation}
for $\bof \in \Co^{(\be)}$.
\end{theo}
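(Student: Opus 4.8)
The plan is to construct the isomorphism from a pair of matched orthonormal bases and then to verify that the abstractly defined unitary coincides with the explicit map $\Psi$. First I would fix, in each space $L^2(\rho_j)$, an orthonormal basis $(e_{\nu,j})_{\nu\in N_j}$ with $0\in N_j$ and $e_{0,j}=1$; this is possible because $\rho_j$ is a probability measure, so the constant function $1$ is a unit vector. By Remark~\ref{onb}, the elementary tensors $e_\bn=\bigotimes_{j\in\N}e_{\nu_j,j}$ with $\bn\in\mathcal{N}$ then form an orthonormal basis of $H^{(\be)}$.

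Next I would prove the first assertion, existence of $\prod_{j=1}^\infty f_j$ in $L^2(\rho)$ for $\bof\in\Co^{(\be)}$. Writing $P_J:=\prod_{j=1}^J f_j\in L^2(\rho)$ and using the product structure of $\rho$ together with Fubini, for $J'>J$ one obtains
\[
\| P_{J'} - P_J \|_{L^2(\rho)}^2
= \prod_{j=1}^J \|f_j\|_{L^2(\rho_j)}^2
\cdot \Big( \prod_{j=J+1}^{J'} \|f_j\|_{L^2(\rho_j)}^2
- 2\,\Re \prod_{j=J+1}^{J'} \scp{f_j}{1}_{L^2(\rho_j)} + 1 \Big).
\]
Since $\bof\in\Co^{(\be)}\subseteq C_0$, both $\prod_j\|f_j\|^2$ and $\prod_j\scp{f_j}{1}$ converge, so by the facts of Section~\ref{as1} the tail products over $\{J+1,\dots,J'\}$ tend to $1$ as $J\to\infty$, uniformly in $J'$, while the leading factor stays bounded. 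Hence the bracket tends to $1-2+1=0$, $(P_J)$ is Cauchy, and $\prod_{j=1}^\infty f_j$ exists in $L^2(\rho)$.

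I would then define the candidate isomorphism $U\colon H^{(\be)}\to L^2(\rho)$ by $U e_\bn:=\prod_{j} e_{\nu_j,j}$, which is a genuine finite product since $e_{0,j}=1$. The system $(\prod_j e_{\nu_j,j})_{\bn\in\mathcal{N}}$ is orthonormal in $L^2(\rho)$ by Fubini, and it is complete: cylinder functions depending on finitely many coordinates are dense in $L^2(\rho)$, and on the coordinates $1,\dots,n$ the finite tensor basis spans $L^2(\rho_1)\otimes\cdots\otimes L^2(\rho_n)$. Matching the two orthonormal bases shows $U$ is an isometric isomorphism. Combining Lemma~\ref{la6} (approximation of $\bigotimes_j g_j$ by its truncations) with the continuity of $U$ and the first assertion yields $U\bigotimes_j g_j=\prod_{j=1}^\infty g_j$ for every $\bog\in\Co^{(\be)}$.

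Finally I would identify $\Psi$ with $U^{-1}$. For $\gamma\in L^2(\rho)$ and $\bog\in\Co^{(\be)}$ the reproducing kernel of $H^{(\be)}$ at $\bog$ is $\KK(\cdot,\bog)=\bigotimes_j g_j\in H^{(\be)}$, so, using that $U$ is unitary,
\begin{align*}
(U^{-1}\gamma)(\bog)
&= \scp{U^{-1}\gamma}{\bigotimes_{j} g_j}_{H^{(\be)}}
= \scp{\gamma}{\prod_{j=1}^\infty g_j}_{L^2(\rho)} \\
&= \int_D \gamma \cdot \prod_{j=1}^\infty \overline{g_j} \, d\rho
= \Psi\gamma(\bog),
\end{align*}
and both sides vanish off $\Co^{(\be)}$. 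Thus $\Psi=U^{-1}$ is an isometric isomorphism between $L^2(\rho)$ and $H^{(\be)}$, and $\Psi^{-1}\bigotimes_j f_j=U\bigotimes_j f_j=\prod_{j=1}^\infty f_j$, which is \eqref{g65}. I expect the main obstacle to be the completeness of the product system in $L^2(\rho)$, i.e., the density of finitely based cylinder functions in the $L^2$-space of an infinite product measure, since this is the sole genuinely measure-theoretic input; everything else reduces to the infinite-product bookkeeping of Section~\ref{as1} and to the reproducing-kernel formalism.
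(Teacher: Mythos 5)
Your proof is correct, but it takes a genuinely different route from the paper's. You build the isomorphism by matching orthonormal bases: Remark~\ref{onb} gives the basis $(e_\bn)$ of $H^{(\be)}$, completeness of the product system $(\prod_j e_{\nu_j,j})$ in $L^2(\rho)$ gives the basis on the other side, and the resulting unitary $U$ is then identified with $\Psi^{-1}$ via the reproducing property $h(\bog)=\scp{h}{\KK(\cdot,\bog)}_{H^{(\be)}}$. The paper instead works basis-free: it observes that $\psi(\bof):=\prod_{j=1}^\infty f_j$ is a feature map for $\Koo^{(\be)}$ on the small domain $\Coo^{(\be)}$ (where existence of the product is trivial), deduces that $\Psi_1\gamma=\scp{\gamma}{\psi(\cdot)}$ is an isometric isomorphism onto $H(\Koo^{(\be)})$ because $\spann\psi(\Coo^{(\be)})$ is dense, and then composes with the restriction isomorphism $H^{(\be)}\cong H(\Koo^{(\be)})$; the existence of $\prod_{j=1}^\infty f_j$ in $L^2(\rho)$ for general $\bof\in\Co^{(\be)}$ and the identity \eqref{g65} are obtained at the end by transporting the $H^{(\be)}$-convergence of Lemma~\ref{la6} through $\Psi^{-1}$. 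Your direct Cauchy estimate for $\|P_{J'}-P_J\|_{L^2(\rho)}$ is a self-contained and arguably more transparent proof of the first assertion, and the basis matching makes unitarity immediate; the paper's feature-map argument avoids choosing bases, stays within the RKHS formalism of the appendix, and produces $\Psi$ directly in the stated integral form rather than identifying it a posteriori. Both arguments rest on the same measure-theoretic input, the density of finitely based cylinder functions in $L^2$ of the product measure, which you correctly single out. The only step you should make explicit is that $U$ sends the truncated elementary tensor $\bigotimes_j g_{J,j}$ to the finite product $\prod_{j=1}^J g_j$; this is the standard identification of the finite Hilbert tensor product $L^2(\rho_1)\otimes\cdots\otimes L^2(\rho_J)$ with $L^2(\rho_1\times\cdots\times\rho_J)$, obtained by expanding each $g_j$ in the chosen basis, and it is needed before Lemma~\ref{la6} and continuity of $U$ can be invoked.
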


\begin{proof}
The existence of $\prod_{j=1}^\infty f_j$ in $L^2(\rho)$ trivially
holds true for $\bof \in \Coo^{(\be)}$.
Let $\psi \colon \Coo^{(\be)} \to L^2(\rho)$ be given by
$\psi \bof := \prod_{j=1}^\infty f_j$.
Since $\scp{\psi \bof}{\psi\bog}_{L^2(\rho)} = \KK(\bog,\bof)$ 
for $\bof,\bog \in \Coo^{(\be)}$,
the mapping $\psi$ is a feature map and $L^2(\rho)$ 
is a feature space for the reproducing kernel $\Koo^{(\be)}$.
The space $H(\Koo^{(\be)})$ may therefore be
described via the linear mapping
$\Psi_1 \colon L^2(\rho) \to \K^{\Coo^{(\be)}}$,
given by
\[
\phantom{\quad\qquad \bog \in \Coo^{(\be)}.}
\Psi_1 \gamma (\bog) := \scp{\gamma}{\psi \bog}_{L^2(\rho)}
=
\int_D \gamma \cdot \prod_{j=1}^\infty \overline{g_j} \, d \rho,
\quad\qquad \bog \in \Coo^{(\be)}.
\]
Since $\spann \psi(\Coo^{(\be)})$ is a dense subset of $L^2(\rho)$,
the mapping $\Psi_1$ is injective. 
We conclude that $\Psi_1$ induces an isometric isomorphism 
between $L^2(\rho)$ and $H(\Koo^{(\be)})$, which will also
be denoted by $\Psi_1$. 

Recall that $h \mapsto h|_{\Coo^{(\be)}}$ defines an isometric
isomorphism between $H^{(\be)}$ and $H(\Koo^{(\be)})$. Its
inverse $\Psi_2 \colon H(\Koo^{(\be)}) \to H^{(\be)}$
is given by \eqref{g47}, i.e., by the extension
\[
\Psi_2 h (\bog) := 
\begin{cases}
\lim_{J \to \infty} h(\bog_J), & \text{if $\bog \in\Co^{(\be)}$},\\
0, & \text{if $\bog \in C \setminus \Co^{(\be)}$.}
\end{cases}
\]
Altogether, we obtain the isometric isomorphism
$\Psi := \Psi_2 \circ \Psi_1 \colon L^2(\rho) \to H^{(\be)}$.

Let $\gamma \in L^2(\rho)$.
By definition of $\Psi$ and $\bog_J$,
\[
\Psi \gamma (\bog) = 
\begin{cases}
\lim_{J \to \infty}
\int_D \gamma \cdot \prod_{j=1}^\infty \overline{g_{J,j}} \, d \rho,
& \text{if $\bog \in\Co^{(\be)}$,}\\
0, & \text{if $\bog \in C \setminus \Co^{(\be)}$.}
\end{cases}
\]
In the particular case $\gamma := \prod_{j=1}^\infty f_j$ with 
$\bof \in \Coo^{(\be)}$ we get
\[
\Psi \gamma (\bog) = \lim_{J \to \infty}
\int_D \prod_{j=1}^J f_j(x_j) \cdot \overline{g_j}(x_j) \, d \rho(\bx)
= \KK(\bog,\bof) = \bigotimes_{j \in \N} f_j (\bog)
\]
for $\bog \in \Co^{(\be)}$, i.e., \eqref{g65} is satisfied for
$\bof \in \Coo^{(\be)}$.
In the general case $\bof \in C^{(\be)}$ this implies
\[
\Psi^{-1} \bigotimes_{j \in \N} f_{J,j} (\bx)
= \prod_{j=1}^\infty f_{J,j} (\bx) = \prod_{j=1}^J f_j(x_j)
\]
for $\rho$-a.e.\ $\bx \in D$. Moreover,
$\bigotimes_{j \in \N} f_{J,j}$ converges to $\bigotimes_{j \in \N} f_j$ 
in $H^{(\be)}$ as $J \to \infty$, see Lemma~\ref{la6}. It follows that
\[
\lim_{J \to \infty} 
\biggl\|\prod_{j=1}^\infty f_{J,j} - 
\Psi^{-1} \bigotimes_{j \in \N} f_{j}\biggr\|_{L^2(\rho)} = 0,
\]
i.e., $\prod_{j=1}^\infty f_j$ exists in $L^2(\rho)$ and
\eqref{g65} is satisfied for $\bof \in \Co^{(\be)}$. Consequently,
with $\bog \in \Co^{(\be)}$ in place of $\bof$,
\[
\Psi \gamma (\bog) = 
\lim_{J \to \infty}
\int_D \gamma \cdot \prod_{j=1}^\infty \overline{g_{J,j}} \, d \rho
=
\int_D \gamma \cdot \prod_{j=1}^\infty \overline{g_j} \, d \rho
\]
for $\gamma \in L^2(\rho)$.
\end{proof}

\subsection*{Acknowledgment}

Part of this work was done during the Dagstuhl Seminar 19341
``Algorithms and Complexity for Continuous Problems'' and during the Research Meeting 20083
``Complexity of Infinite-Dimensional Problems'' 
at Schlo\ss\ Dagstuhl.
The authors would like to thank the staff of Schlo\ss\ Dagstuhl 
for their hospitality.
We also thank the anonymous referees for useful comments, which helped us to improve the presentation.
The research of Aicke Hinrichs was funded in part by the Austrian Science Fund (FWF) Projects F5513-N26 and P34808.
For the purpose of open access, the authors have applied a CC BY public copyright license to any Author Accepted Manuscript arising from this submission.

{\small

\bibliographystyle{abbrvnat}
\bibliography{references}

}

\end{document}